\newtheorem{Theorem}{Theorem}[section]
\newtheorem{theorem}[Theorem]{Theorem}
\newtheorem{prop}[Theorem]{Proposition}
\newtheorem{pro}[Theorem]{Proposition}
\newtheorem{proposition}[Theorem]{Proposition}
\newtheorem{lem}[Theorem]{Lemma}
\newtheorem{cor}[Theorem]{Corollary}
\newtheorem{fact}[Theorem]{Fact}
\newtheorem{dfn}[Theorem]{Definition}
\newtheorem{setting}[Theorem]{Settings}
\newtheorem{question}[Theorem]{Problem}
\newtheorem{claim}{Claim}
\def\x{\underline x}
\def\Hom{\operatorname{Hom}}
\def\Ext{\operatorname{Ext}}
\newcommand{\rmg}{\mathrm{g}}
\newcommand{\calD}{\mathcal{D}}
\newcommand{\calF}{\mathcal{F}}
\newcommand{\calM}{\mathcal{M}}
\newcommand{\calN}{\mathcal{N}}
\newcommand{\fka}{\mathfrak{a}}
\newcommand{\fkb}{\mathfrak{b}}
\newcommand{\fkc}{\mathfrak{c}}
\newcommand{\fkm}{\mathfrak{m}}
\newcommand{\fkn}{\mathfrak{n}}
\newcommand{\fkp}{\mathfrak{p}}
\newcommand{\fkq}{\mathfrak{q}}
\newcommand{\fkM}{\mathfrak{M}}
\newcommand{\q}{\mathfrak{q}}
\newcommand{\m}{\mathfrak{m}}
\def\H{\operatorname{H}}
\def\depth{\operatorname{depth}}
\def\Supp{\operatorname{Supp}}
\def\Ann{\operatorname{Ann}}
\def\Ass{\operatorname{Ass}}
\def\Assh{\operatorname{Assh}}
\def\Min{\operatorname{Min}}
\def\height{\operatorname{ht}}
\def\Spec{\operatorname{Spec}}
\def\gr{\operatorname{gr}}
\begin{document}

\title{ Hilbert functions of socle ideals}

\author{Hoang Le Truong}
\address{Institute of Mathematics, VAST, 18 Hoang Quoc Viet Road 10307 Hanoi Vietnam}
\email{hltruong@math.ac.vn}

\author{Hoang Ngoc Yen}
\address{The Department of Mathematics, Thai Nguyen University of education.
20 Luong Ngoc Quyen Street, Thai Nguyen City, Thai Nguyen Province, Viet Nam.}
\email{hnyen91@gmail.com}

\thanks{{\it Key words and phrases:}
regularity, Gorenstein, Cohen-Macaulay, sequentially Cohen-Macaulay, multiplicity, irreducible decompositions.
\endgraf
{\it 2000 Mathematics Subject Classification:}
13H10, 13A30, 13B22, 13H15 ; Secondary 13D45, 13H10.\\
This research is funded by Vietnam National Foundation for Science
and Technology Development (NAFOSTED) under grant number
101.04-2014.15.
}

\date{}
\date{}

\maketitle

\begin{abstract}
In this paper, we explore a relationship between Hilbert functions and the irreducible decompositions of ideals in local rings. Applications are given to characterize the regularity,  Gorensteinness, Cohen-Macaulayness and  sequentially Cohen-Macaulayness of local rings. 

\end{abstract}

{\footnotesize \tableofcontents}

\section{Introduction}

Let $I$ be an ideal of a  Noetherian local ring $(R,\m)$ such that $R/I$ is Artinian. 
The socle of $I$ is the ideal which is defined as $I:\m$.
It is the unique largest ideal $J$ of $R$ with $J\m\subseteq I$. Moreover, it is also the
unique largest submodule of module $R/I$ which has the structure of a module over the residue field $k = R/\m R$. Therefore
$\ell_R(I:\m/I)=\dim_k(I:\m/I)$ is the minimal number of socle generators of $I$, where $\ell_R(*)$ stands for the length. 
 The minimal number of socle generators of modules are as important as the minimal number of generators of the modules, to which they are (in some sense) dual, however, in general, they are much harder to find. For a deeper discussion of socle ideals we refer the reader to \cite{ABIM}, \cite{CGHPU}, \cite{GSa1}, \cite{PU}.

The minimal number of socle generators of modules is in relative to the irreducible decompositions of modules. Irreducible ideals were already presented in the famous proof of  Noether 
that ideals in Noetherian commutative rings have a primary decomposition.  She firstly observed that the Noetherian property implied every ideal $\fka$ of $R$ can be expressed as an irredundant intersection of irreducible ideals of $R$ and the number of irreducible ideals appearing in such an expression depends only on $\fka$ and not on the expression. Let us call  the number $\calN(\fka)$ of irreducible ideals of $\fka$ that appear in an irredundant irreducible decomposition of $\fka$ the index of reducibility of $\fka$. Remember that, in the case in which $\fka=I$, $\calN(I)=\ell_R([I :_R \m]/I)$  and so the index of reducibility of $I$ is also the minimal number of socle generators of $I$. 

The minimal number of socle generators is closely related to the ideas in the theory of Gorenstein rings.  Northcott and Rees \cite [Theorem 3]{NR} proved that for all parameter ideals $\q$, the minimal number of socle generators of $\q$ is $1$ then $R$ is Gorenstein. In 1957 D. G. Northcott \cite [Theorem 3]{No} proved that for parameter ideals $\q$ in a Cohen-Macaulay local ring $R$,  the minimal number of socle generators of $\q$ is constant and independent of the choice of $\q$. However, this property of constant the minimal number of socle generators for parameter ideals does not characterize Cohen-Macaulay rings.
 The example of a non-Cohen-Macaulay local ring $R$ with $\calN(\q) = 2$  for every parameter ideal $\q$ was firstly given in 1964 by S. Endo and M. Narita \cite {EN}.  In 1984 S. Goto and N. Suzuki \cite{GS1} explored, for a given finitely generated $R$-module $M$, the supremum $\sup_{\q}\calN(\q)$, where $\q$ runs through parameter ideals of $R$ and showed that the supremum is finite, when $R$ is a generalized Cohen-Macaulay module. Compared  with  the  case of  rings and modules  with  finite  local  cohomologies,  the  general case  is  much more  complicated  and  difficult  to  treat. No standard  induction techniques  work.  However, from this point of view, a natural question is how to characterize Cohen-Macaulayness of rings in term of the minimal number of socle generators. 


On the other hand, a very interesting and important numerical invariant of a graded finitely generated
$S$-module $M$ is its Hilbert function. 
Suppose that $S=\bigoplus\limits_{n\ge 0} S_n$ is positively graded and $S_0$ is an Artinian local ring. The Hilbert function of $S$ is $H_S(n) =\ell_{S_0}(S_i)$ for $i\in\Bbb N$. Hilbert’s insight was that $H_S$ is
determined by finitely many of its values. He proved that there exists a polynomial (called the Hilbert
polynomial) $h_S(t) \in \Bbb Q[t]$ such that $H_S(t) = h_S(t)$ for $t\gg 0$. 

Now let $\gr_I(R) = \bigoplus\limits_{n\ge 0}I^n/I^{n+1}$ and call it the associated graded ring of $I$. Put $S=(0):_{\gr_I(R)}\fkM$, where $\fkM=\m/I\oplus\bigoplus\limits_{n\ge 1}I^n/I^{n+1}$. Then in special cases($e_0(\m)>1$), we see that 
$\ell_R(S_n)=\ell([I^{n+1}:\m]/I^{n+1})$ for all large enough $n$. Therefore the function of  the minimal number of socle generators of $I^n$ on $n$ become polynomial when large enough $n$. In general  there exists a polynomial $p_{I}(n)$ of degree $d-1$ with rational coefficients such that  
$$\mathcal{N}(I^{n+1};R)=\ell_R([I^{n+1}  :_R \fkm]/I^{n+1} )=p_{I}(n)$$  for all large enough $n$. Then, there are integers $f_i(I)$ such that
$$p_{I}(n)=\sum\limits_{i=0}^{d-1}(-1)^if_i(I)\binom{n+d-1-i}{d-1-i}.$$
These integers  $f_i(I)$   are called the Noetherian coefficients of $I$. 
In particular, the leading coefficient  $f_0(I)$ is  called the irreducible multiplicity of $I$, by first author. It was shown that the index of reducibility of parameter ideals can be used to characterize the Cohen-Macaulayness of local rings. 
From this point of view, we
explore this notions in this paper, where we apply it to characterize the regularity,  Gorensteinness, Cohen-Macaulayness and  sequentially Cohen-Macaulayness of local rings.

Now let us recall the definition of the Hilbert-Samuel polynomial of $I$. It is well known that the Hilbert-Samuel function $\ell_R(R/I^{n+1})$ become the polynomial which is called Hilbert-Samuel polynomial
$$\ell_R(R/I^{n+1})=\sum\limits_{i=0}^{d}(-1)^ie_i(I)\binom{n+d-i}{d-i}.$$
for all large enough $n$. These integers  $e_i(I)$   are called the Hilbert coefficients of $I$. In the particular case, the leading coefficient  $e_0(I)$ is  said to be the multiplicity of $I$.
In \cite{Na}, Nagata gave a characterization of the regularity of local rings in term of  the multiplicity of the maximal ideal. The multiplicity of the maximal ideal is $1$ if and only if  $R$ is regular, provided that $R$ is unmixed, that is $\dim \hat R/\fkp=d$ for all $\fkp\in \Ass(\hat R)$. Therefore it is a natural question to ask whether one may establish a similar correspondence between the regularity of local rings and the irreducible multiplicity of the maximal ideal.
The following result is given an answer of this question.

\begin{theorem}\label{6.100}
 Assume that  $R$ is unmixed.  Then  $R$ is regular if and only if $f_0(\m)=1.$

\end{theorem}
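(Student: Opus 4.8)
The plan is to compare the polynomial $p_{\m}(n)$ with the Hilbert polynomial of the associated graded ring $\gr_{\m}(R)$, which reduces the whole statement to Nagata's multiplicity‑one criterion recalled above. Set $d=\dim R$; the case $d=0$ is trivial, so assume $d\ge 1$.

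First I would record the chain of inclusions $\m^{n+1}\subseteq\m^{n}\subseteq\m^{n+1}:_R\m$, valid for every $n\ge 0$, and take lengths to obtain, for all $n$,
$$\calN(\m^{n+1};R)=\ell_R([\m^{n+1}:_R\m]/\m^{n+1})=\ell_R(\m^{n}/\m^{n+1})+\ell_R([\m^{n+1}:_R\m]/\m^{n}).$$
For $n\gg 0$ the first summand equals the Hilbert polynomial of $\gr_{\m}(R)$, whose leading term is $e_0(\m)\binom{n+d-1}{d-1}$, while the second summand is a non‑negative function bounded above by $\calN(\m^{n+1};R)=p_{\m}(n)$, hence is eventually a polynomial of degree at most $d-1$. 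Comparing the coefficients of $\binom{n+d-1}{d-1}$ on both sides then yields $f_0(\m)\ge e_0(\m)\ge 1$.

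For the implication ``$f_0(\m)=1\Rightarrow R$ regular'': the inequality just obtained forces $e_0(\m)=1$, and since $R$ is unmixed, Nagata's theorem \cite{Na} gives that $R$ is regular. For the converse, assume $R$ is regular; then $\gr_{\m}(R)$ is a polynomial ring over $k=R/\m$, in particular a domain. Hence, writing $v(a)$ for the $\m$‑adic order of a nonzero $a\in R$ and choosing $x\in\m\setminus\m^2$, the initial form of $ax$ is the non‑zero product of the initial forms of $a$ and $x$, so $ax\in\m^{v(a)+1}\setminus\m^{v(a)+2}$; therefore $a\m\subseteq\m^{n+1}$ forces $v(a)\ge n$, i.e. $\m^{n+1}:_R\m=\m^{n}$ for every $n$. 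Substituting this into the displayed formula, the extra summand vanishes and $p_{\m}(n)=\ell_R(\m^{n}/\m^{n+1})=\binom{n+d-1}{d-1}$ for $n\gg 0$, so $f_0(\m)=1$.

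The real content is the inequality $f_0(\m)\ge e_0(\m)$ together with the classical fact that unmixedness promotes $e_0(\m)=1$ to regularity; the hypothesis ``unmixed'' is genuinely needed, since it is exactly what Nagata's criterion requires. I expect the only mildly technical point to be the identity $\m^{n+1}:_R\m=\m^{n}$ in the regular case, which the leading‑form computation in the (domain) associated graded ring settles cleanly.
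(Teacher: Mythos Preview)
Your proof is correct. Both directions are sound: the inequality $f_0(\m)\ge e_0(\m)$ drops out of the obvious inclusion $\m^{n}\subseteq \m^{n+1}:_R\m$, and together with Nagata's multiplicity-one criterion this handles the implication $f_0(\m)=1\Rightarrow R$ regular; for the converse, the domain property of $\gr_{\m}(R)$ cleanly gives $\m^{n+1}:_R\m=\m^{n}$ and hence $p_\m(n)=\binom{n+d-1}{d-1}$.

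The paper's proof reaches the same endpoint (Nagata's criterion) for the implication $f_0(\m)=1\Rightarrow R$ regular, but argues for the stronger \emph{equality} $f_0(\m)=e_0(\m)$: from unmixedness it gets $\depth R>0$, then asserts $\m^{n+1}:\m=\m^{n}$ for $n\gg 0$, so that $\calN(\m^{n+1};R)=\ell_R(\m^{n}/\m^{n+1})$ eventually. Your inequality is weaker but entirely sufficient, and its justification is shorter. For the direction $R$ regular $\Rightarrow f_0(\m)=1$, the paper instead invokes the machinery built up in Sections~3--5: since $R$ is Cohen--Macaulay and $\m$ is a (distinguished) parameter ideal, Proposition~\ref{P2.7} (via Theorem~\ref{5800}) yields $f_0(\m)=r_d(R)=1$. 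Your argument bypasses all of that machinery with the elementary initial-form computation in the polynomial ring $\gr_\m(R)$. So your route is genuinely more self-contained; the paper's route, on the other hand, exhibits the result as a special case of the general formula $f_0(\q)=r_d(R)$ for parameter ideals in Cohen--Macaulay rings, which is what the surrounding theory is set up to deliver.
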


Recall $e_1(I)$ is called by Vasconselos(\cite{V2}) the Chern coefficient of $I$. Then  the following result give a relationship between the Chern coefficient and the irreducible multiplicity.

\begin{prop}
 Assume that  $R$ is unmixed. Then for all parameter ideals $\q\subseteq \m^2$, we have
$$e_1(\q:\m)-e_1(\q)\le f_0(\q).$$
\end{prop}

In \cite{Tr2} the first author showed to characterize the Gorensteinness of rings in term of the Chern coefficient of socle parameter ideals. A local ring $R$ is Gorenstein iff $e_1(\q:\m)-e_1(\q)\le 1$ for all parameter ideals $\q\subseteq \m^2$, provided $R$ is unmixed. From this point of view, a natural question is how to characterize Gorensteinness of rings in term of  the irreducible multiplicity of parameter ideals. The following result is given an answer of this question.

\begin{theorem}\label{6.200}
 Assume that  $R$ is unmixed. Then  $R$ is Gorenstein if and only if $ f_0(\fkq)=1$,  for all parameter ideals $\frak q\subseteq \m^2$.

\end{theorem}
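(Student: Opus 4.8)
The plan is to establish the two implications by quite different means: the ``only if'' direction by computing the function $n\mapsto\calN(\q^{n+1};R)$ explicitly when $R$ is Cohen--Macaulay, and the ``if'' direction by combining the Proposition above with the Chern-coefficient criterion of \cite{Tr2}. Throughout we may assume $d:=\dim R\ge1$.

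Suppose first that $R$ is Gorenstein. Then $R$ is Cohen--Macaulay and, by \cite{No}, $\ell_R(\Soc(R/\q))$ equals the Cohen--Macaulay type $r(R)$ of $R$, which is $1$, for every parameter ideal $\q$. Fix such a $\q=(x_1,\dots,x_d)$; since $x_1,\dots,x_d$ is a regular sequence, $\gr_\q(R)\cong A[X_1,\dots,X_d]$ is a polynomial ring over the Artinian ring $A:=R/\q$, so $\q^n/\q^{n+1}$ is a free $A$-module of rank $\binom{n+d-1}{d-1}$. I would first show $[\q^{n+1}:_R\m]\subseteq\q^n$ for all $n$: if $y\m\subseteq\q^{n+1}$ but $y\notin\q^n$, take the largest $j<n$ with $y\in\q^j$, expand the class of $y$ in the free $A$-module $\q^j/\q^{j+1}$, and multiply by $x_1$; comparing coefficients in the free module $\q^{j+1}/\q^{j+2}$ forces every coefficient of $y$ into the maximal ideal $\m_A$ of $A$, contradicting $y\notin\q^{j+1}$. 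Then, for $y\in\q^n$ with image $\bar y$ in $\q^n/\q^{n+1}\cong A^{\binom{n+d-1}{d-1}}$, observe that $yz\in\q^{n+1}$ automatically for $z\in\q$, while for $z\in\m$ the class of $yz$ in $\q^n/\q^{n+1}$ is $\bar z\,\bar y$; hence $y\m\subseteq\q^{n+1}$ if and only if $\bar y$ is annihilated by $\m_A$, i.e. $\bar y\in\Soc(A)^{\binom{n+d-1}{d-1}}$. This gives an isomorphism
$$[\q^{n+1}:_R\m]/\q^{n+1}\ \cong\ \Soc(R/\q)^{\binom{n+d-1}{d-1}}\qquad(n\ge0),$$
so $\calN(\q^{n+1};R)=\ell_R(\Soc(R/\q))\binom{n+d-1}{d-1}=\binom{n+d-1}{d-1}$ for all $n\ge0$. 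Thus $p_\q(n)=\binom{n+d-1}{d-1}$, and reading off the leading coefficient gives $f_0(\q)=1$; this holds for every parameter ideal, a fortiori for those contained in $\m^2$.

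Conversely, assume $R$ is unmixed and $f_0(\q)=1$ for all parameter ideals $\q\subseteq\m^2$. The Proposition above yields $e_1(\q:\m)-e_1(\q)\le f_0(\q)=1$ for every such $\q$. By the criterion of \cite{Tr2} (an unmixed local ring $R$ is Gorenstein if and only if $e_1(\q:\m)-e_1(\q)\le1$ for all parameter ideals $\q\subseteq\m^2$), we conclude that $R$ is Gorenstein.

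The real content is the ``only if'' direction, and in particular the identification $[\q^{n+1}:_R\m]/\q^{n+1}\cong\Soc(R/\q)^{\binom{n+d-1}{d-1}}$. Both the inclusion $[\q^{n+1}:_R\m]\subseteq\q^n$ and the reduction of the socle computation to the Artinian quotient $R/\q$ rely on $\q$ being generated by a regular sequence (equivalently, on the Cohen--Macaulayness of $R$), and the bookkeeping with monomials in $\gr_\q(R)$ must be carried out carefully; but once this is in place, the degree-$(d-1)$ polynomial $p_\q$ is pinned down completely and the value $f_0(\q)=1$ drops out. The ``if'' direction is then a formal consequence of the Proposition together with \cite{Tr2}.
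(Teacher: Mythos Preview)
Your proof is correct and follows essentially the same route as the paper: the ``only if'' direction reproves the Cohen--Macaulay case of Proposition~\ref{P2.7}, and the ``if'' direction is exactly the chain $(2)\Rightarrow(3)\Rightarrow(1)$ of Theorem~\ref{6.2}, where the final implication you outsource to \cite{Tr2} is handled internally via Proposition~\ref{P4.60}. One minor slip of phrasing: in your argument that $[\q^{n+1}:\m]\subseteq\q^n$, multiplying by $x_1$ forces the coefficients of $\bar y$ to vanish in $A$ (not merely to lie in $\m_A$), since $X_1$ is a nonzerodivisor on $\gr_\q(R)$---but the contradiction is the same.
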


We denote by $r(R)=\ell_R(\Ext^d_R(R/\fkm,R))$ the Cohen-Macaulay type. The first author in \cite{Tr2} proved that if $R$ is unmixed then $R$ is Cohen-Macaulay iff $e_1(\q:\m)-e_1(\q)\le r(R)$ for all parameter ideals $\q\subseteq \m^2$. From this point of view, as in Theorem \ref{6.200}, we shall show the following result  which  is an answer of above question.

\begin{theorem}\label{6.300}
 Assume that  $R$ is unmixed. Then $R$ is Cohen-Macaulay if and only if $ f_0(\fkq)=r(R)$, for all parameter ideals $\fkq\subseteq \fkm^2$.



\end{theorem}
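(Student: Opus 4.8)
The plan is to prove the two implications separately: the forward one by a direct computation with the associated graded ring of a parameter ideal, and the converse by combining the Proposition proved above with the Cohen--Macaulayness criterion of \cite{Tr2} recalled before the statement.

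For the forward implication, let $R$ be Cohen--Macaulay of dimension $d$ and let $\q=(a_1,\dots,a_d)$ be an arbitrary parameter ideal. Since $R$ is Cohen--Macaulay, $a_1,\dots,a_d$ is a regular sequence, so $\gr_{\q}(R)\cong A[X_1,\dots,X_d]$, a polynomial ring over the Artinian ring $A:=R/\q$. I would first show that $[\q^{n+1}:_R\m]/\q^{n+1}\subseteq\q^n/\q^{n+1}$ for all $n\ge 0$: if $x\in\q^i\setminus\q^{i+1}$ with $i<n$ then its initial form $x^{\ast}$ is a nonzero element of $A[X_1,\dots,X_d]_i$, so $a_1^{\ast}x^{\ast}=X_1x^{\ast}\ne 0$ because $X_1$ is a nonzerodivisor on the polynomial ring; hence $a_1x\in\q^{i+1}\setminus\q^{i+2}$, and as $i+2\le n+1$ we get $a_1x\notin\q^{n+1}$, so $x$ is not a socle element. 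Next, since $\q^n/\q^{n+1}=A[X_1,\dots,X_d]_n$ is a free $A$-module of rank $\binom{n+d-1}{d-1}$ and the $\m$-action on it factors through $\m_A:=\m/\q$, one gets
$$[\q^{n+1}:_R\m]/\q^{n+1}=(0):_{A[X_1,\dots,X_d]_n}\m_A\cong\Soc(A)^{\oplus\binom{n+d-1}{d-1}},$$
so that $\mathcal{N}(\q^{n+1};R)=\ell_R(\Soc(A))\binom{n+d-1}{d-1}$ for every $n\ge 0$. Finally, because $\q$ is generated by a regular sequence, the standard change of rings gives $\Ext^d_R(R/\m,R)\cong\Hom_{R/\q}(R/\m,R/\q)=\Soc(A)$, hence $\ell_R(\Soc(A))=r(R)$; comparing $p_{\q}(n)=r(R)\binom{n+d-1}{d-1}$ with the defining expansion of the Noetherian coefficients yields $f_0(\q)=r(R)$ (and $f_i(\q)=0$ for $i\ge 1$). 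In particular $f_0(\q)=r(R)$ for all parameter ideals $\q\subseteq\m^2$.

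For the converse, assume $R$ is unmixed and $f_0(\q)=r(R)$ for every parameter ideal $\q\subseteq\m^2$. By the Proposition above, $e_1(\q:\m)-e_1(\q)\le f_0(\q)=r(R)$ for every such $\q$. The criterion of \cite{Tr2} recalled before the theorem (an unmixed local ring $R$ is Cohen--Macaulay if and only if $e_1(\q:\m)-e_1(\q)\le r(R)$ for all parameter ideals $\q\subseteq\m^2$) then forces $R$ to be Cohen--Macaulay.

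The main obstacle is the forward direction, where one must verify that the whole socle of $R/\q^{n+1}$ sits in the top graded piece $\q^n/\q^{n+1}$ and has length exactly $r(R)\binom{n+d-1}{d-1}$ for every $n$; this uses the polynomial-ring structure of $\gr_{\q}(R)$ in the Cohen--Macaulay case together with the precise action of $\m$ on the graded pieces. The converse presents no genuine difficulty once the Proposition and \cite{Tr2} are available: its content lies entirely in those earlier results, and it is parallel to (and contains, when $r(R)=1$) the Gorenstein case, Theorem \ref{6.200}.
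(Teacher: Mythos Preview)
Your proof is correct and follows essentially the paper's route: the forward direction is the Cohen--Macaulay special case of the paper's Proposition~\ref{P2.7} (the same associated-graded computation), and for the converse both arguments pass through the inequality $e_1(\q:\m)-e_1(\q)\le f_0(\q)$ and then invoke the $e_1$--criterion for Cohen--Macaulayness, you via \cite{Tr2} and the paper via its own Proposition~\ref{P4.60}.
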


A natural question from Theorem \ref{6.200} and \ref{6.300}  is what happen if $R$ is not unmixed. To sate the answer of this question,  let us fix our notation and terminology.  Let $M~(\ne (0))$ be a finitely generated $R$-module 
with finite Krull dimension, say  $d =\dim_RM$. A filtration $$\calD : D_0=(0)\subsetneq D_1\subsetneq D_2\subsetneq \cdots 
\subsetneq D_{\ell}=M$$ of $R$-submodules of $M$ is  called the {\it dimension filtration} of $M$,  if  for all $1 \le i \le \ell$, $D_{i-1}$ is the largest $R$-submodule of $D_i$ with $\dim_RD_{i-1}<\dim_R D_i$, where $\dim_R(0) = - \infty$ for convention. We say that $M$ is a {\it sequentially Cohen-Macaulay} $R$-module, 
if  $C_i=D_i/D_{i-1}$   is a Cohen-Macaulay $R$-module (necessarily with $\dim_RC_i = \dim_RD_i$) for all $1 \le i \le \ell$ (\cite{Sch, St}). Hence $M$ is a sequentially Cohen--Macaulay $R$--module with $\ell = 1$ if and only if $M$ is a Cohen-Macaulay $R$-module with $\dim R/\fkp = \dim_RM$ for every $\fkp \in \Ass_RM$. We say that $R$ is a sequentially Cohen-Macaulay ring, if $\dim R < \infty$ and $R$ is a sequentially Cohen-Macaulay module over itself. 

Let $\x=x_1,x_2, \ldots, x_d$ be a system of parameters of $M$. Then $\x$ is said to be 
{\it distinguished}, if  
$$  (x_j \mid d_{i} < j \le d) D_i=(0)$$
for all $1\le i\le \ell$, where $d_i=\dim_R D_i$. A parameter ideal $\fkq$ of $M$ is called  
{\it distinguished}, if there exists a 
distinguished system $x_1,x_2, \ldots, x_d$ of parameters of $M$ such that $\fkq=(x_1,x_2, \ldots, x_d)$. Therefore, if $M$ is a Cohen-Macaulay $R$-module, every parameter ideal of $M$ is distinguished.  Let $\Lambda (M)=\{\dim_R L \mid L \ \text{is an}\ R\text{-submodule of}\ M, L \ne (0)\}.$

With this notation the main results of this paper are summarized into the following, which gives a complete generalization of the results in the Cohen-Macaulay case to those of sequentially Cohen-Macaulay rings. 

\begin{theorem}\label{6.4}
Assume that $R$ is a homomorphic image of a Cohen-Macaulay local ring. Then the following statements are equivalent.
\begin{itemize}
\item[$(i)$] $R$ is sequentially Cohen-Macaulay.

\item[$(ii)$]  There exists an integer $n$ such that for all good parameter ideals $\fkq\subseteq \fkm^n$ and $2\le j\in\Lambda(R)$, we
    have
$$ r_{j}(R)\ge(-1)^{d-j}(e_{d-j+1}(\q:\m)-e_{d-j+1}(\fkq)).$$

\item[$(iii)$]  There exists an integer $n$ such that for all distinguished parameter ideals $\fkq\subseteq \fkm^n$ and $2\le j\in\Lambda(R)$, we
    have
$$  r_{j}(R)\geq(-1)^{d-j}f_{d-j}(\q;R).$$

\end{itemize}

\end{theorem}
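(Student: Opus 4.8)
The plan is to reduce the sequentially Cohen--Macaulay characterization to the filtered analogue of the Cohen--Macaulay statements already recalled in the introduction, by passing through the dimension filtration $\calD: (0) = D_0 \subsetneq D_1 \subsetneq \cdots \subsetneq D_\ell = R$ and isolating each graded piece $C_i = D_i/D_{i-1}$. The first step is to make precise the numerical invariants $r_j(R)$ appearing in the statement: for $j \in \Lambda(R)$, one sets $r_j(R) = \ell_R(\Ext^j_R(R/\fkm, C_i))$ where $\dim_R C_i = j$, i.e. the Cohen--Macaulay type of the (potential) Cohen--Macaulay quotient in dimension $j$; when $R$ is sequentially Cohen--Macaulay these are the honest types of the $C_i$. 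I would then establish, for a distinguished parameter ideal $\fkq = (x_1, \ldots, x_d) \subseteq \fkm^n$, the two comparison formulas that drive everything: first, the additivity of Hilbert--Samuel coefficients along the dimension filtration applied to $\fkq$ and to $\fkq:\fkm$, which expresses $e_{d-j+1}(\fkq:\fkm) - e_{d-j+1}(\fkq)$ in terms of the length of a socle-type module supported in dimension $j$; and second, the translation between that Hilbert coefficient difference and the Noetherian coefficient $f_{d-j}(\fkq; R)$, using the surjection relating $(I^{n+1}:\fkm)/I^{n+1}$ to the graded piece $S_n$ of $(0):_{\gr_I(R)}\fkM$ as sketched in the introduction. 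The equivalence $(ii) \Leftrightarrow (iii)$ should fall out of this second translation essentially formally, since both sides of the two inequalities are being matched term by term.

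For the implication $(i) \Rightarrow (ii)$, I would argue that when $R$ is sequentially Cohen--Macaulay every good parameter ideal contained in a sufficiently high power of $\fkm$ is distinguished (choosing $n$ large enough that $\fkm^n$ annihilates the relevant finite-length submodules forcing the distinguished condition), so that $\fkq$ is a $C_i$-regular sequence on each quotient $C_i$ up to the appropriate truncation. Then on each Cohen--Macaulay module $C_i$ one has exact control of the socle: $\ell_R((\fkq C_i : \fkm)/\fkq C_i) = r(C_i) = r_j(R)$, and Northcott-type computations give the clean bound. Summing over the pieces of the filtration with the alternating signs $(-1)^{d-j}$ — which appear precisely because $e_{d-j+1}$ sits in the middle of the Hilbert polynomial of a $j$-dimensional module — yields exactly the inequality in $(ii)$. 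The key technical input here is the behaviour of $\Ext^\bullet_R(R/\fkm, -)$ and of Hilbert coefficients under the short exact sequences $0 \to D_{i-1} \to D_i \to C_i \to 0$, together with the vanishing $\Ext^j_R(R/\fkm, C_{i'}) = 0$ for $i' \ne i$ in the relevant degree, which is where the hypothesis that $R$ is a homomorphic image of a Cohen--Macaulay ring (hence has a dualizing complex, so local duality applies) is used.

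The converse $(ii) \Rightarrow (i)$ — equivalently $(iii) \Rightarrow (i)$ — is the main obstacle, and here I expect the proof to follow the strategy of the first author's earlier papers \cite{Tr2}: assume the inequalities hold for all good parameter ideals in some $\fkm^n$, and deduce that each $C_i$ is Cohen--Macaulay by showing the inequality can only be an equality when the lower-dimensional local cohomology modules $\H^j_\fkm(R)$ that would obstruct sequential Cohen--Macaulayness vanish. Concretely, I would pick a distinguished system of parameters that is part of a filter-regular sequence, run the standard exact-sequence estimates that bound $\ell_R((\fkq:\fkm)/\fkq)$ from below by $\sum_{i<j} \ell_R(\H^i_\fkm(R)) \cdot (\text{binomial terms})$, and observe that the hypothesis forces all these extra terms to vanish in the limit, which by Schenzel's criterion means $R$ is sequentially Cohen--Macaulay. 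The delicate point is that, unlike the Cohen--Macaulay case, no single induction on dimension works cleanly — one must handle the whole filtration simultaneously — so I would set up the argument by descending induction on $j \in \Lambda(R)$, at each stage using what has already been proved about the top pieces $C_{i'}$ with $\dim C_{i'} > j$ to strip them off (they contribute nothing to the obstruction in degree $< j$) before analysing $C_i$. Establishing that the "error terms" are nonnegative and vanish exactly under the stated bound, uniformly over all parameter ideals in $\fkm^n$, is the heart of the matter and the step I would expect to require the most care.
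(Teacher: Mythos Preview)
Your proposal has the right shape for $(i)\Rightarrow(ii)$ but contains genuine gaps elsewhere.

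First, a definitional point: in this paper $r_j(R)=\ell_R\bigl((0):_{\H^j_\fkm(R)}\fkm\bigr)$ is the socle dimension of the $j$-th local cohomology of $R$ itself, not a type computed on the subquotient $C_i$. The two agree once $R$ is sequentially Cohen--Macaulay, but for the converse direction you cannot assume this.

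Second, the equivalence $(ii)\Leftrightarrow(iii)$ is \emph{not} formal. The identity $e_j(I)-e_j(\fkq)=f_{j-1}(\fkq;R)$ is proved in the paper only after one knows $I^{n+1}=\fkq^n I$ (Proposition~\ref{coe}), which uses that $R$ is already sequentially Cohen--Macaulay. In general one only has $I^n\subseteq\fkq^n:\fkm$, giving the one-sided inequality $e_1(I)-e_1(\fkq)\le f_0(\fkq;R)$ (Lemma~\ref{5.1}); there is no such comparison for the higher coefficients. The paper therefore proves $(i)\Leftrightarrow(ii)$ and $(i)\Leftrightarrow(iii)$ separately rather than passing between $(ii)$ and $(iii)$ directly.

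The serious gap is in the converse $(ii)\Rightarrow(i)$. Your plan to bound $\ell_R((\fkq:\fkm)/\fkq)$ from below by sums of $\ell_R(\H^i_\fkm(R))$ times binomial coefficients cannot work: there is no hypothesis that $R$ is generalized Cohen--Macaulay, so the $\H^i_\fkm(R)$ need not have finite length. The paper's route is quite different. It first isolates the single inequality $e_1(I)-e_1(\fkq)\le r_d(R)$ (the case $j=d$) and uses it to prove that the top quotient $S=R/\fka_{\ell-1}$ is Cohen--Macaulay (Proposition~\ref{P4.60}). This step is the technical heart: one constructs a \emph{Goto sequence of type~I} $x_1,\ldots,x_{d-2}$ (Section~2), whose $d$-sequence property makes each $x_i$ superficial and yields the descent inequality $e_1(I;R)-e_1(\fkq;R)\ge e_1(I\overline R;\overline R)-e_1(\fkq\overline R;\overline R)$ for $\overline R=R/(x_1)$. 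Iterating reduces to a $2$-dimensional unmixed quotient, which is automatically generalized Cohen--Macaulay, and there the hypothesis forces $r_1=0$. Only after $S$ is known to be Cohen--Macaulay does the paper mod out $\fkb=(x_{d_{\ell-1}+1},\ldots,x_d)$, use Lemma~\ref{4.5} to translate the remaining inequalities to $R/\fkb$, and run induction on $\dim R$ (Proposition~\ref{P4.61}). Your descending induction on $j\in\Lambda(R)$ with filter-regular sequences does not supply a mechanism for either the superficial-element descent or the reduction to dimension~$2$, and without those the argument does not close.
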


Later we will give some applications of these results. First, as an immediate consequence of our main result, the assumption of Theorem \ref{6.100}, \ref{6.200} and \ref{6.300} are essential.  The necessary condition of the following result was proved by N. T. Cuong, P. H. Quy and first author (\cite[Corollary 5.3]{CQT}).

\begin{theorem}\label{6.7}
 $R$ is Gorenstein if and only if for all parameter ideals $\fkq\subseteq \fkm^2$ and $n\gg 0$, we
    have
$$ \calN(\q^{n+1};R)=\binom{n+d-1}{d-1}.$$

\end{theorem}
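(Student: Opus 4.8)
The plan is to derive this from Theorem \ref{6.200} together with the polynomial expansion of the socle-generator function described in the introduction. Recall that for any ideal $I$ with $R/I$ Artinian one has $\calN(I^{n+1};R)=p_I(n)=\sum_{i=0}^{d-1}(-1)^if_i(I)\binom{n+d-1-i}{d-1-i}$ for $n\gg0$. The key observation is that the stated equality $\calN(\q^{n+1};R)=\binom{n+d-1}{d-1}$ for $n\gg0$ is equivalent, by comparing the two polynomials in $n$, to the simultaneous vanishing conditions $f_0(\q)=1$ and $f_i(\q)=0$ for all $1\le i\le d-1$. In particular it forces $f_0(\q)=1$. So the hypothesis ``$\calN(\q^{n+1};R)=\binom{n+d-1}{d-1}$ for all $\q\subseteq\fkm^2$ and $n\gg0$'' implies in particular $f_0(\q)=1$ for every parameter ideal $\q\subseteq\fkm^2$.

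For the forward direction I would first show that if $R$ is Gorenstein then $R$ is Cohen-Macaulay and unmixed, and then compute $\calN(\q^{n+1};R)$ directly. Since $R$ is Cohen-Macaulay and $\q$ is a parameter ideal, $\q$ is a reduction-free situation in the sense that $\gr_\q(R)$ is Cohen-Macaulay; moreover for a Gorenstein local ring, each $\q^{n+1}$ is again an irreducible ideal — equivalently $\ell_R([\q^{n+1}:\fkm]/\q^{n+1})$ equals the Cohen-Macaulay type $r(R)=1$ of $R/\q^{n+1}$ — wait, that is the wrong count; rather the correct statement is $\calN(\q^{n+1};R)=\ell_R([\q^{n+1}:\fkm]/\q^{n+1})$, and for $R$ Gorenstein and $\q$ a parameter ideal, $R/\q$ is Gorenstein Artinian, hence so is the relationship giving $\calN(\q^{n+1};R)=\binom{n+d-1}{d-1}$; the cleanest route is to note $f_0(\q)=r(R)=1$ (by Theorem \ref{6.300} applied to the Cohen-Macaulay ring $R$) and $f_i(\q)=0$ for $i\ge1$, the latter following because in the Cohen-Macaulay case $\gr_\q(R)$ is Cohen-Macaulay so the socle of $\gr_\q(R)$ modulo $\fkM$ is concentrated in the top degree and contributes only the binomial $\binom{n+d-1}{d-1}$ term. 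This yields $\calN(\q^{n+1};R)=\binom{n+d-1}{d-1}$ for all $\q\subseteq\fkm^2$ and $n\gg0$.

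For the converse, the hypothesis gives $f_0(\q)=1$ for all parameter ideals $\q\subseteq\fkm^2$, so it remains only to verify the unmixedness hypothesis of Theorem \ref{6.200}, after which that theorem delivers Gorensteinness. This is where Theorem \ref{6.4} enters: the vanishing $f_i(\q)=0$ for $i=1,\dots,d-1$ (equivalently $(-1)^{d-j}f_{d-j}(\q;R)=0$ for $2\le j\le d$) forces, via criterion (iii) of Theorem \ref{6.4}, that $R$ is sequentially Cohen-Macaulay — since we need $r_j(R)\ge 0$ to hold, which is automatic, but more precisely the equality case forces each $r_j(R)=0$ for $j\in\Lambda(R)$ with $j\ge2$, i.e. $\Lambda(R)$ cannot contain any $j$ with $2\le j<d$; combined with $f_0(\q)=1$ this pins down $\Lambda(R)=\{d\}$ (no lower-dimensional submodules), which is exactly the statement that $R$ is unmixed and Cohen-Macaulay. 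Then Theorem \ref{6.200} applies and gives that $R$ is Gorenstein.

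The main obstacle I anticipate is the careful bookkeeping in the converse: translating the single scalar hypothesis $\calN(\q^{n+1};R)=\binom{n+d-1}{d-1}$ into the full list of Noetherian-coefficient vanishings $f_0(\q)=1,\ f_1(\q)=\cdots=f_{d-1}(\q)=0$ requires knowing that these are honest polynomial identities in $n$ (valid for $n\gg0$) so that coefficient comparison is legitimate, and then feeding the higher vanishings into Theorem \ref{6.4}(iii) to extract unmixedness rather than merely sequential Cohen-Macaulayness. One must be attentive that Theorem \ref{6.4} is stated with an integer $n$ and parameter ideals in $\fkm^n$, whereas here we only have control for $\q\subseteq\fkm^2$; the point is that $\fkm^2\supseteq\fkm^n$ for $n\ge2$, so the hypothesis for all $\q\subseteq\fkm^2$ in particular covers all $\q\subseteq\fkm^n$, making Theorem \ref{6.4} applicable. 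Once these compatibilities are checked, the argument closes cleanly.
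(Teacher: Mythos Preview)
Your overall architecture for the converse is right up to the point where you obtain that $R$ is sequentially Cohen--Macaulay from Theorem~\ref{6.4}(iii) (equivalently Theorem~\ref{5800}(iii)): the hypothesis does give $f_0(\fkq)=1$ and $f_i(\fkq)=0$ for $i\ge1$, and then the inequalities $r_j(R)\ge(-1)^{d-j}f_{d-j}(\fkq)$ are trivially satisfied. But the next sentence, ``the equality case forces each $r_j(R)=0$ for $j\in\Lambda(R)$ with $j\ge2$,'' is not justified. Criterion~(iii) is only a one-way inequality; knowing $(-1)^{d-j}f_{d-j}(\fkq)=0$ tells you $r_j(R)\ge0$, which is automatic and does not force $r_j(R)=0$. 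You have implicitly assumed that the inequality in~(iii) is always an equality, which is precisely what needs to be proved. Moreover, even if you could extract $r_j(R)=0$ for $2\le j<d$, you have said nothing about $r_0(R)$ and $r_1(R)$, so $\Lambda(R)=\{d\}$ (and hence unmixedness, which you need to invoke Theorem~\ref{6.200}) does not follow.

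The paper closes this gap differently and avoids the detour through Theorem~\ref{6.200} entirely. Once $R$ is sequentially Cohen--Macaulay, Theorem~\ref{3.800} produces a distinguished parameter ideal $\fkq\subseteq\fkm^2$ with $\calN(\fkq;R)=\sum_{j}r_j(R)$, and then Proposition~\ref{P2.7} gives the exact formula
\[
\calN(\fkq^{n+1};R)=\sum_{i=1}^d r_i(R)\binom{n+i-1}{i-1}+r_0(R)
\]
for all $n\ge1$. Comparing this polynomial with the assumed $\binom{n+d-1}{d-1}$ forces $r_d(R)=1$ and $r_i(R)=0$ for every $i<d$, so $R$ is Cohen--Macaulay of type~$1$, i.e.\ Gorenstein. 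The same device also cleans up your forward direction: for $R$ Gorenstein one has $\calN(\fkq;R)=r_d(R)$ for every parameter ideal, so Proposition~\ref{P2.7} applies directly and yields $\calN(\fkq^{n+1};R)=\binom{n+d-1}{d-1}$ without the hand-waving about $\gr_\fkq(R)$.
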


In \cite[Theorem 5.2]{CQT}, N. T. Cuong, P. H. Quy and first author showed that $R$ is Cohen-Macaulay if and only if for all parameter ideals $\fkq\subseteq \fkm^2$ and $n\ge 0$, we
    have
$ \calN(\q^{n+1};R)=r_d(R)\binom{n+d-1}{d-1}.$ Note that the condition of Hilbert function $\calN(\q^{n+1};R)$, holding true for all $n\ge 0$, is necessary to their proof. 
The result of  Theorem 5.2 in \cite{CQT} was actually covered in the following result,
but in view of the importance of the following result we changed the condition from Hilbert function to Hilbert polynomial.

\begin{theorem}\label{6.8}
 $R$ is Cohen-Macaulay if and only if for all parameter ideals $\fkq\subseteq \fkm^2$ and $n\gg 0$, we
    have
$$ \calN(\q^{n+1};R)=r_d(R)\binom{n+d-1}{d-1}.$$

\end{theorem}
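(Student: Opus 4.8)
The plan is to derive this from the already-stated characterization of Cohen-Macaulayness via the Noetherian coefficient $f_{d-1}(\fkq)$, together with the asymptotic identification of $\calN(\fkq^{n+1};R)$ with the Hilbert polynomial $p_{\fkq}(n)$ described in the introduction. The key point is that the hypothesis ``$\calN(\fkq^{n+1};R)=r_d(R)\binom{n+d-1}{d-1}$ for $n\gg 0$'' is, by definition of the Noetherian coefficients, equivalent to the statement that $p_{\fkq}(n)=r_d(R)\binom{n+d-1}{d-1}$, i.e.\ to $f_0(\fkq)=r_d(R)$ together with the vanishing of all lower Noetherian coefficients $f_i(\fkq)=0$ for $1\le i\le d-1$. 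So the theorem will follow once I show: (a) if $R$ is Cohen-Macaulay then for every parameter ideal $\fkq\subseteq\fkm^2$ one has $p_{\fkq}(n)=r_d(R)\binom{n+d-1}{d-1}$ on the nose; and (b) conversely, if $p_{\fkq}(n)=r_d(R)\binom{n+d-1}{d-1}$ for all parameter ideals $\fkq\subseteq\fkm^2$, then $R$ is Cohen-Macaulay.

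For direction (a), I would use that in a Cohen-Macaulay local ring a parameter ideal $\fkq$ is generated by a regular sequence, so $\gr_{\fkq}(R)$ is a polynomial ring over $R/\fkq$; hence $\fkq^{n+1}:\fkm/\fkq^{n+1}$ can be computed degree by degree. Concretely, $[\fkq^{n+1}:\fkm]/\fkq^{n+1}\cong \Hom_{R/\fkq}(R/\fkm, R/\fkq)\otimes_{R/\fkq}(\fkq^n/\fkq^{n+1})$ up to the usual shift, and since $R/\fkq$ is an Artinian Gorenstein-type computation gives $\ell_R([\fkq^{n+1}:\fkm]/\fkq^{n+1})=r(R/\fkq)\cdot\binom{n+d-1}{d-1}$ exactly, where $r(R/\fkq)=r_d(R)$ is independent of $\fkq$ by Northcott's theorem. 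This yields $\calN(\fkq^{n+1};R)=r_d(R)\binom{n+d-1}{d-1}$ for \emph{all} $n\ge 0$, in particular for $n\gg 0$; I should double-check the constant and the binomial shift against the convention fixed in the introduction.

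For direction (b), the hypothesis gives in particular $f_0(\fkq)=r_d(R)$ for all parameter ideals $\fkq\subseteq\fkm^2$, and Theorem \ref{6.300} (applicable after reducing to the unmixed case, or rather: one first shows the hypothesis forces $R$ unmixed) then delivers Cohen-Macaulayness. The main obstacle, and the step I would spend the most care on, is the reduction to the unmixed hypothesis of Theorem \ref{6.300}: a priori $f_0(\fkq)=r_d(R)$ is only assumed, and one must rule out the existence of lower-dimensional associated primes. Here I would invoke Theorem \ref{6.4}: the condition $p_{\fkq}(n)=r_d(R)\binom{n+d-1}{d-1}$ forces $f_{d-j}(\fkq;R)=0$ for all $j<d$ with $2\le j\in\Lambda(R)$, and via $(iii)\Rightarrow(i)$ (after checking that parameter ideals in a high power $\fkm^n$ can be taken distinguished, which is where the homomorphic-image-of-Cohen-Macaulay hypothesis would enter if needed — though note Theorem \ref{6.8} as stated does not assume it, so one must instead argue directly that the vanishing $f_i(\fkq)=0$ for $i\ge 1$ kills all proper submodules in the dimension filtration) one concludes $R$ is sequentially Cohen-Macaulay with $\ell=1$, i.e.\ Cohen-Macaulay and unmixed. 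Alternatively, and perhaps more cleanly, I would bypass Theorem \ref{6.300} entirely and run (b) as a direct corollary of the implication $(iii)\Rightarrow(i)$ in Theorem \ref{6.4} applied with $j=d$ only, observing that the hypothesis of Theorem \ref{6.8} is exactly the $j=d$ instance together with the forced vanishing of the other coefficients; I expect the bookkeeping identifying ``for $n\gg 0$'' with ``equality of Hilbert polynomials'' to be routine but is the place where sign conventions $(-1)^{d-j}$ must be tracked carefully.
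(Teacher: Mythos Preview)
Your overall strategy matches the paper's. For direction (a) you compute by hand what the paper obtains by citing Proposition \ref{P2.7} in the Cohen--Macaulay case; this is fine and equivalent. For direction (b) you correctly identify that the hypothesis forces $f_0(\fkq)=r_d(R)$ and $f_j(\fkq)=0$ for $j\ge 1$, and that Theorem \ref{5800} (the body version of Theorem \ref{6.4}) then yields that $R$ is sequentially Cohen--Macaulay.

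There is, however, a genuine gap in your passage from ``sequentially Cohen--Macaulay'' to ``$\ell=1$''. You write that ``one must argue directly that the vanishing $f_i(\fkq)=0$ for $i\ge 1$ kills all proper submodules in the dimension filtration,'' but you never supply that argument, and the implication $(iii)\Rightarrow(i)$ of Theorem \ref{6.4} by itself only outputs \emph{sequentially} Cohen--Macaulay, not $\ell=1$. Vanishing of the $f_j$ says nothing about the $r_j(R)$ without a bridge. The paper's bridge is this: once $R$ is sequentially Cohen--Macaulay, Theorem \ref{3.800} furnishes a distinguished parameter ideal $\fkq\subseteq\fkm^2$ with $\calN(\fkq;R)=\sum_j r_j(R)$, and Proposition \ref{P2.7} then gives the exact identity
\[
\calN(\fkq^{n+1};R)=\sum_{i=1}^d r_i(R)\binom{n+i-1}{i-1}+r_0(R)
\]
for all $n\ge 1$. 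Comparing this with the hypothesized polynomial $r_d(R)\binom{n+d-1}{d-1}$ forces $r_i(R)=0$ for every $i\le d-1$, and hence $R$ is Cohen--Macaulay. Equivalently, you could close the loop via $(i)\Rightarrow(ii)$ of Theorem \ref{5800}, which identifies $(-1)^j f_j(\fkq)$ with $r_{d-j}(R)$ and lets you read off the vanishing directly; either way, this step is the missing piece in your outline.

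One minor point: your worry about the homomorphic-image-of-Cohen--Macaulay hypothesis is unnecessary. Although the statement in the introduction is bare, the theorem is proved in Section 6 under a standing Setting that imposes exactly this assumption, so Theorem \ref{5800} is legitimately available.
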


Let us explain how this paper is organized. Section 2 is devoted to a brief survey on dimension filtrations, the notion of Goto sequences and the existence of Goto sequences.  The computation of the minimal number of socle generators of a special parameter ideal of sequentially Cohen-Macaulay ring is well understood in section 3. In section 4, our aim now is to establish  a characterization of sequentially Cohen-Macaulay rings in term of the Hilbert coefficients of the socle of distinguished parameter ideals, which is a part of  Theorem \ref{6.4}.  Continuing our discussion in section 4, the section 5 will give the complete proof of Theorem \ref{6.4}. In last section, we are going to discuss  the characterizations of the regularity,  Gorensteinness and Cohen-Macaulayness  of local rings.

\section{Goto sequences}

Let $R$ be a commutative Noetherian ring, which is not assumed to be a local ring. Let $M~(\ne (0))$ be a finitely generated $R$-module 
with finite Krull dimension, say  $d =\dim_RM$. 
We put $$\Assh_RM=\{\fkp \in \Supp_RM \mid \dim R/\fkp=d\}.$$ Then $$\Assh_RM \subseteq \Min_RM \subseteq \Ass_RM.$$
Let $\Lambda (M)=\{\dim_R L \mid L \ \text{is an}\ R\text{-submodule of}\ M, L \ne (0)\}.$ We then have $$\Lambda (M)=\{\dim R/\fkp \mid \fkp \in\Ass_RM\}.$$ 
We put  $\ell=\sharp \Lambda (M)$ and number the elements  $\{d_i\}_{1\leq i\leq \ell}$ of $\Lambda (M)$  
so that  $$0 \le d_1<d_2<\cdots<d_{\ell}=d.$$
Then because the base ring $R$ is Noetherian, 
for each $1\leq i\leq \ell$ the $R$-module $M$ contains 
the largest $R$-submodule $D_i$ with $\dim_RD_i=d_i$. 
Therefore, letting $D_0=(0)$, we have the filtration 

$$\calD : D_0=(0)\subsetneq D_1\subsetneq D_2\subsetneq \cdots 
\subsetneq D_{\ell}=M$$
of $R$-submodules of $M$, which we call the dimension filtration of $M$. 
The notion of dimension filtration was firstly given 
by P. Schenzel \cite{Sch}. 
Our notion of dimension filtration is a little different from that of \cite{CC, Sch},
but throughout  this paper let us utilize the above definition. It is standard to check that $\{D_j\}_{0\leq j\leq i}$ 
(resp. $\{D_j/D_i\}_{i\leq j\leq \ell}$) 
is the dimension filtration of $D_i$ (resp. $M/D_i$) 
for every $1\leq i\leq \ell$. We put   $C_i=D_i/D_{i-1}$ for $1 \le i \le \ell$.

We note two characterizations of the dimension filtration. Let $$(0)=\bigcap_{\fkp \in \Ass_RM}M(\fkp)$$ be a primary decomposition
of $(0)$ in $M$, where $M(\fkp)$ is an $R$-submodule of $M$ 
with $\Ass_RM/M(\fkp)=\{ \fkp \}$ for each $\fkp \in \Ass_RM$. 
We then have the following.

\begin{pro}[{\cite[Proposition 2.2, Corollary 2.3]{Sch}}]\label{d1}
The following assertions hold true. 
\begin{enumerate}
\item[$(1)$] $D_i=\bigcap_{\fkp \in \Ass_RM,\ \dim R/\fkp \geq d_{i+1}}M(\fkp)$
for all $0\leq i < \ell$. 
\item[$(2)$] Let $1\leq i \leq \ell$. Then 
$\Ass_RC_i=\{ \fkp \in \Ass_RM \mid \dim R/\fkp =d_i\}$ and
$\Ass_RD_i=\{ \fkp \in \Ass_RM \mid \dim R/\fkp \leq d_i\}$. 
\item[$(3)$] $\Ass_RM/D_i=\{\fkp \in \Ass_RM \mid \dim R/\fkp 
\geq d_{i+1} \}$ for all $1\leq i < \ell$. 
\end{enumerate}
\end{pro}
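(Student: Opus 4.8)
The plan is to pin down each $D_i$ explicitly as the intersection appearing in $(1)$; once that is done, $(2)$ and $(3)$ fall out from the standard behaviour of associated primes under primary decomposition. The single input I would isolate at the outset concerns the fixed decomposition $(0)=\bigcap_{\fkp\in\Ass_RM}M(\fkp)$: since every associated prime occurs exactly once, this decomposition is irredundant, so for any subset $S\subseteq\Ass_RM$, writing $L_S=\bigcap_{\fkp\in S}M(\fkp)$, one has $\Ass_R(M/L_S)=S$ and $\Ass_RL_S=\Ass_RM\setminus S$. The first equality is the familiar statement that deleting components of an irredundant primary decomposition again yields an irredundant one. For the second, I would note $L_S\cap L_{\Ass_RM\setminus S}=(0)$, so that $L_S$ embeds into $M/L_{\Ass_RM\setminus S}$ and hence $\Ass_RL_S\subseteq\Ass_RM\setminus S$; the reverse inclusion follows by applying $\Ass_R(-)$ to $0\to L_S\to M\to M/L_S\to 0$ and using the first equality.

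Next I would introduce, for $0\le i<\ell$,
$$N_i=\bigcap_{\fkp\in\Ass_RM,\ \dim R/\fkp\ge d_{i+1}}M(\fkp),$$
with the conventions $N_0=(0)$ (the index set is all of $\Ass_RM$) and $N_\ell=M$ (empty intersection). Applying the fact above to $S=\{\fkp\in\Ass_RM\mid\dim R/\fkp\ge d_{i+1}\}$ gives $\Ass_RN_i=\{\fkp\in\Ass_RM\mid\dim R/\fkp\le d_i\}$ --- here I use that the set of values of $\dim R/\fkp$ over $\Ass_RM$ is precisely $\{d_1<\cdots<d_\ell\}$, so ``$<d_{i+1}$'' is the same as ``$\le d_i$'' --- whence $\dim_RN_i=d_i$. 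To conclude $N_i=D_i$ it then suffices to check that $N_i$ is the largest submodule of $M$ of dimension $\le d_i$: given $L\subseteq M$ with $\dim_RL\le d_i$, its image $\bar L$ in $M/N_i$ satisfies $\Ass_R\bar L\subseteq\Ass_R(M/N_i)=\{\fkp\mid\dim R/\fkp\ge d_{i+1}\}$, while each associated prime of $\bar L$ contains $\Ann_RL$ and so has $\dim R/\fkp\le\dim_RL\le d_i<d_{i+1}$; hence $\Ass_R\bar L=\emptyset$, i.e.\ $L\subseteq N_i$. This proves $(1)$, and the formula for $\Ass_RD_i$ in $(2)$ together with $(3)$ are then immediate from the same fact, applied to $S$ and to its complement respectively.

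It remains to treat $\Ass_RC_i$ in $(2)$. Writing $J=\bigcap_{\dim R/\fkp=d_i}M(\fkp)$, I would first observe $D_{i-1}=N_{i-1}=N_i\cap J=D_i\cap J$, so the isomorphism $C_i=D_i/D_{i-1}=D_i/(D_i\cap J)\cong(D_i+J)/J$ realizes $C_i$ as a submodule of $M/J$, whose set of associated primes is exactly $\{\fkp\mid\dim R/\fkp=d_i\}$ by the fact of the first paragraph; hence $\Ass_RC_i\subseteq\{\fkp\mid\dim R/\fkp=d_i\}$. For the reverse inclusion I would use $0\to D_{i-1}\to D_i\to C_i\to 0$, which gives $\Ass_RD_i\subseteq\Ass_RD_{i-1}\cup\Ass_RC_i$; since $\Ass_RD_i\setminus\Ass_RD_{i-1}=\{\fkp\mid\dim R/\fkp=d_i\}$ by the already-established part of $(2)$, this forces $\{\fkp\mid\dim R/\fkp=d_i\}\subseteq\Ass_RC_i$, giving equality.

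There is no genuine obstacle here --- all the substance lies in the primary-decomposition fact of the first paragraph (this is essentially \cite[Proposition 2.2]{Sch}), and the rest is bookkeeping. The only points that call for a moment's care are the boundary values $i\in\{0,\ell\}$, the equivalence of the cutoffs ``$\dim R/\fkp\ge d_{i+1}$'' and ``$\dim R/\fkp>d_i$'', and the remark that ``largest submodule of dimension $\le d_i$'' and ``largest submodule of dimension $=d_i$'' coincide: this holds because $d_i\in\Lambda(M)$ guarantees a submodule of dimension exactly $d_i$, which is then contained in the largest submodule of dimension $\le d_i$, forcing the latter to have dimension $d_i$.
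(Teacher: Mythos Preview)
Your argument is correct. The paper itself does not supply a proof of this proposition; it simply records the statement and cites Schenzel \cite[Proposition~2.2, Corollary~2.3]{Sch}, so there is no in-paper proof to compare against. Your write-up is in fact a clean reconstruction of Schenzel's argument: the key input you isolate in the first paragraph (the description of $\Ass_R L_S$ and $\Ass_R(M/L_S)$ for a sub-intersection $L_S$ of an irredundant primary decomposition) is precisely the content of \cite[Proposition~2.2]{Sch}, and the deduction of the dimension filtration from it is \cite[Corollary~2.3]{Sch}. The maximality argument for $N_i$ via $\Ass_R\bar L=\emptyset$, the identification $D_{i-1}=D_i\cap J$ used to embed $C_i$ into $M/J$, and the boundary remarks at the end are all handled correctly.
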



We now assume that $R$ is a local ring with maximal ideal $\fkm$ and let $M$ be a finitely generated $R$-module with $d = \dim_RM \ge 1$ and $\calD =\{D_i\}_{0 \le i \le \ell}$ the dimension filtration. Let $\x=x_1,x_2, \ldots, x_d$ be a system of parameters of $M$. Then $\x$ is said to be 
{\it distinguished}, if  
$$  (x_j \mid d_{i} < j \le d) D_i=(0)$$
for all $1\le i\le \ell$, where $d_i=\dim_R D_i$. A parameter ideal $\fkq$ of $M$ is called  
{\it distinguished}, if there exists a 
distinguished system $x_1,x_2, \ldots, x_d$ of parameters of $M$ such that $\fkq=(x_1,x_2, \ldots, x_d)$. Therefore, if $M$ is a Cohen-Macaulay $R$-module, every parameter ideal of $M$ is distinguished. 
 Distinguished system of parameters exist 
and if $x_1, x_2, \ldots, x_d$ is a distinguished system of parameters of $M$, then $x_1^{n_1},x_2^{n_2}, \ldots, x_d^{n_d}$ is also a distinguished system of parameters of $M$ for all integers $n_j \ge 1$. 


\medskip
\begin{setting}\label{2.6}{\rm
Let $\x=x_1,x_2,\ldots,x_s$ be a system  of elements of $R$ and $\q_j$ denote the ideal generated by $x_1,\ldots,x_j$ for all $j=1,\ldots,s$.   }
\end{setting}

\begin{dfn}\rm
A system $\x$ of elements of $R$ is called {\it Goto sequence} on $M$, if   for all $0\le j\le s-1$ and $0\le i\le\ell$, we have the following
\begin{enumerate}
\item[$(1)$] $\Ass(C_i/\q_j C_i)\subseteq \Assh(C_i/\q_j C_i)\cup \{\fkm\}$,
\item[$(2)$] $x_jD_i=0$ if $d_i<j\le d_{i+1}$,
\item[$(3)$] $\q_{j-1}:x_j=\H^0_\m(M/\q_{j-1}M) \text{ and } x_j\not\in\fkp \text{ for all } \fkp\in\Ass(M/\q_{j-1}M)-\{\fkm\}$.
\end{enumerate}

\end{dfn}

At first glance, the definition of normal does not seem very intuitive. Once we
enter the world of sequences, however, we will see that Goto sequence has a very
nice  interpretation and properties. We will also see that Goto sequence is useful for many inductive proofs in the next sections.
Before we can give some properties of this sequence, we first need to reformulate the notion of $d$-sequences. The sequence $x_1,x_2,\ldots,x_s$ of elements of $R$ is called a
$d$-sequence on $M$ if
$$\q_i M:x_{i+1}x_j=\q_i M:x_j$$
for all $0\le i<j\le s$. The concept of a $d$-sequence is given by Huneke \cite{Hu}  and it plays an important role in the theory of Blow up
algebra, e.g. Ress algebra. 
 In the following lemma, we will give some properties of Goto sequences that will
be used in the next sections when we study the Hilbert coefficients and Noetherian coefficients.

\medskip

\begin{lem}\label{property}
Let $\x=x_1,x_2,\ldots,x_s$ form a Goto sequence on $M$. Then we have
\begin{enumerate}
\item[$(1)$] $\x$ is part of a system of parameters of $M$. 
\item[$(2)$] $\x$ is a $d$-sequence.
\item[$(3)$] If $d=s$ then $\x$ is a distinguished system of parameters of $M$.
\item[$(4)$] $x_{i+1},\ldots,x_s$ is also a Goto sequence on $M/\q_iM$.
\end{enumerate}

\end{lem}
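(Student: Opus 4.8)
The plan is to verify the four assertions in sequence, since each later one builds on the combinatorics set up by the earlier ones, and throughout I would exploit the defining conditions (1)--(3) of a Goto sequence together with the structural facts about the dimension filtration recorded in Proposition \ref{d1} and the paragraph preceding Setting \ref{2.6}. First, for part (1), I would argue that a Goto sequence is part of a system of parameters of $M$ by induction on $s$: condition (3) forces $x_j \notin \fkp$ for every $\fkp \in \Ass(M/\q_{j-1}M) \setminus \{\fkm\}$, which are exactly the primes in the support of $M/\q_{j-1}M$ of positive dimension; hence passing from $\q_{j-1}$ to $\q_j$ cuts the dimension down by one as long as $\dim(M/\q_{j-1}M) > 0$, and since $s \le d$ (which itself follows by the same dimension count) the elements $x_1,\dots,x_s$ extend to a full system of parameters.

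For part (2), to show $\x$ is a $d$-sequence I would prove $\q_i M : x_{i+1}x_j = \q_i M : x_j$ for $0 \le i < j \le s$. The inclusion $\supseteq$ is trivial, so the content is $\subseteq$. Using condition (3) applied with index $i+1$ (after reducing modulo $\q_i M$, which is legitimate by part (4) applied inductively, so that $x_{i+1},\dots,x_s$ is a Goto sequence on $M/\q_i M$), one has $\q_i M : x_{i+1} = \q_i M :_{M}\H^0_\fkm(M/\q_iM)$-type control, and then one checks that multiplication by $x_j$ cannot create new elements of the colon ideal because $x_j$ avoids all non-maximal associated primes of $M/\q_i M$; the only associated prime it may lie in is $\fkm$, and the $\fkm$-torsion part is precisely captured by condition (3). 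This is the step I expect to be the main obstacle, since it requires carefully combining the torsion statement $\q_{i} M : x_{i+1} = \H^0_\fkm(M/\q_i M)$ with the prime-avoidance statement to kill the cross terms; some bookkeeping with the submodules $M(\fkp)$ from the primary decomposition will be needed.

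For part (3), assuming $d = s$, I must check the distinguishedness condition $(x_j \mid d_i < j \le d)D_i = (0)$ for all $1 \le i \le \ell$. But this is essentially immediate from condition (2) in the definition of a Goto sequence: condition (2) says $x_j D_i = 0$ whenever $d_i < j \le d_{i+1}$, and iterating over the successive steps $d_i < d_{i+1} < \dots < d_\ell = d$ of the dimension filtration gives $x_j D_i = 0$ for all $d_i < j \le d$ (using that $D_i \subseteq D_{i'}$ for $i \le i'$, so $x_j D_{i'} = 0$ forces $x_j D_i = 0$). Combined with part (1), which guarantees $\x$ is a genuine system of parameters, this is exactly the definition of a distinguished system of parameters.

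Finally, for part (4), I would show $x_{i+1},\dots,x_s$ is a Goto sequence on $\overline{M} := M/\q_i M$ by checking conditions (1)--(3) directly. The key auxiliary facts are that the dimension filtration of $\overline M$ is compatible with that of $M$ in the relevant range — here one uses conditions (1) and (2) to see that modding out by $\q_i M$ interacts well with the $D_\bullet$, so that the quotients $C_\bullet(\overline M)$ are controlled by the $C_\bullet(M)/\q_i C_\bullet(M)$ — and that the colon/torsion condition (3) for the tail is inherited since $\q_{j-1}(\overline M) = \q_{j-1}M/\q_i M$ and $\overline M/\q_{j-1}\overline M \cong M/\q_{j-1}M$ for $j > i$. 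Condition (1) for $\overline M$ follows from condition (1) for $M$ after noting $C_i(\overline M)$ is a quotient of $C_i(M)$ by a $\q_i$-multiple, and the associated primes only shrink (or acquire $\fkm$); condition (2) is inherited verbatim since the relevant $D$'s and $d$'s for $\overline M$ match those of $M$ in the surviving range; condition (3) is inherited as just noted. Throughout, I would lean on the already-stated fact that $x_1^{n_1},\dots,x_d^{n_d}$ is again distinguished when $x_1,\dots,x_d$ is, which signals that these conditions are robust under the natural operations, and on Proposition \ref{d1} to translate between the filtration description and the associated-prime description whenever a prime-avoidance argument is invoked.
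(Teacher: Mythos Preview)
Your treatment of parts (1), (3), and (4) is more explicit than the paper's proof, which simply declares (1) and (3) to be ``immediate consequences of the definitions'' and does not comment on (4) at all. Your sketches for these parts are reasonable and essentially fill in what the paper leaves to the reader.

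The substantive divergence is in part (2). The paper does not argue directly; it cites \cite[Theorem~1.1(vii)]{T} (Trung's paper on absolutely superficial sequences), which establishes that a sequence satisfying the colon/torsion condition in part (3) of the Goto-sequence definition is automatically a $d$-sequence. Your direct approach runs into a real gap at exactly the point you flag as the main obstacle: to prove $\q_i M : x_{i+1}x_j = \q_i M : x_j$ for $i+1 < j$, you assert that $x_j$ avoids all non-maximal associated primes of $M/\q_i M$, but the defining condition (3) only guarantees this for $M/\q_{j-1}M$, not for $M/\q_i M$ with $i < j-1$. Bridging that gap---showing the prime-avoidance and colon conditions propagate down to earlier truncations---is precisely the content packaged in Trung's theorem, and your sketch does not supply it. Note also a mild circularity risk: you invoke part (4) inside the argument for (2), so if you pursue a direct proof you should establish (4) first and check that its verification does not itself require the $d$-sequence property.

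In short: for (2) the paper outsources the work to \cite{T}, whereas your attempt to do it by hand is incomplete at the key step.
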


\begin{proof}
As an immediate consequence of the definitions we have the first assertion and the third assertion. The second assertion is followed from $(vii)$ of \cite[Theorem 1.1]{T}.

\end{proof}





\begin{lem}\label{property1}	
Let $R$ be a homomorphic image of a Cohen–Macaulay
local ring. Assume that system $\x=x_1,x_2,\ldots,x_d$ of parameters form a Goto sequence on $M$. Let $N$ denote the unmixed component of $M/\q_{d-2}M$ and $d\ge 2$. If $M/N$ is Cohen-Macaulay, so is also $M/D_{\ell-1}$.

\end{lem}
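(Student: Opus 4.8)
The plan is to use the dimension filtration together with the defining properties of the Goto sequence $\x$ to reduce the Cohen-Macaulayness of $M/D_{\ell-1}$ to that of $M/N$. The key observation is that $M/D_{\ell-1}$ has dimension $d$ and its associated primes are exactly those $\fkp \in \Ass_R M$ with $\dim R/\fkp = d$ (by Proposition \ref{d1}(3)), so $M/D_{\ell-1}$ is the top-dimensional ``unmixed-up-to-torsion'' piece of $M$; our task is to show it is actually Cohen-Macaulay. First I would pass to the module $\overline{M} = M/\q_{d-2}M$, which has dimension $2$ by Lemma \ref{property}(1), and note that by Lemma \ref{property}(4) the two remaining elements $x_{d-1}, x_d$ form a Goto sequence on $\overline{M}$. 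By condition (3) in the definition of Goto sequence, $\q_{d-2}M : x_{d-1} = \H^0_\m(\overline{M})$ and $x_{d-1}, x_d$ avoid all non-maximal associated primes of the relevant quotients; hence $x_{d-1}, x_d$ is a system of parameters for $\overline{M}$ that is ``as regular as possible.'' The unmixed component $N$ of $\overline{M}$ fits into $\overline{M}/N$ having finite length in dimensions below $2$, and the hypothesis is precisely that $\overline{M}/N$ is Cohen-Macaulay of dimension $2$.

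Next I would compare the filtrations. Since $\x$ is a Goto sequence, by condition (2) we have $x_j D_i = 0$ whenever $d_i < j \le d_{i+1}$; in particular the elements $x_1, \dots, x_{d-2}$ — together with the structure of $\calD$ — kill all of $D_{\ell-1}$ except possibly a top graded piece, because $\dim_R D_{\ell-1} = d_{\ell-1} \le d-1$. This should let me identify the image of $D_{\ell-1}$ in $\overline{M} = M/\q_{d-2}M$ with (a submodule closely related to) a lower-dimensional submodule of $\overline{M}$, and then with the non-unmixed part that $N$ captures. Concretely, I expect to prove that $D_{\ell-1} + \q_{d-2}M$ and the preimage of $N$ in $M$ differ only by something of dimension $\le d-2$ that is already contained in $\q_{d-2}M$, so that the natural surjection $M/D_{\ell-1} \twoheadrightarrow \overline{M}/N$ — after modding out by the parameters — is an isomorphism in the top local cohomology. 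Then $\depth(M/D_{\ell-1}) \ge d$ will follow from $\depth(\overline{M}/N) \ge 2$ by a standard depth count along the regular sequence $x_1, \dots, x_{d-2}$ (which is a regular sequence on $M/D_{\ell-1}$ by condition (3) applied inductively and Lemma \ref{property}(2), i.e.\ the $d$-sequence property specialized to $M/D_{\ell-1}$ where the $\H^0_\m$ terms vanish).

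The main obstacle, I expect, is the bookkeeping in the previous paragraph: showing precisely that the unmixed component $N$ of $M/\q_{d-2}M$ ``sees'' exactly the submodule $D_{\ell-1}$ modulo the parameters, and not something slightly larger or smaller. This requires carefully tracking how the submodules $D_1 \subsetneq \cdots \subsetneq D_{\ell-1}$ behave under the quotient by $\q_{d-2}M$, using that each $x_j$ with $j \le d-2$ is a nonzerodivisor on the appropriate quotient modulo the torsion (condition (3)) and annihilates the appropriate $D_i$ (condition (2)). A clean way to organize this is by induction on $\ell$ or on $d$, peeling off one parameter at a time via Lemma \ref{property}(4) and reducing to the case $d = 2$, where $D_{\ell-1}$ is just the finite-length submodule $\H^0_\m(M)$ (since all associated primes of $D_{\ell-1}$ have dimension $\le 1$ but conditions (2),(3) force the dimension-$1$ part to be killed), and then $M/D_{\ell-1} = M/N$ literally, giving the conclusion. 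Once the $d=2$ base case is settled and the inductive step is the regular-sequence depth argument above, the lemma follows.
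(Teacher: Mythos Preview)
Your overall induction on $d$, with base case $d=2$ (where $\q_{d-2}=(0)$, so $N$ is literally the unmixed component $D_{\ell-1}$ of $M$), matches the paper's strategy. But the inductive step has a real gap. You assert that $x_1,\dots,x_{d-2}$ is a regular sequence on $C_\ell=M/D_{\ell-1}$, justifying this by ``condition (3) applied inductively and the $d$-sequence property specialized to $M/D_{\ell-1}$ where the $\H^0_\m$ terms vanish.'' The first element $x_1$ is indeed $C_\ell$-regular, since $\Ass_R C_\ell=\Assh_R M$ and $x_1$ avoids these primes. After that, however, condition~(1) of the Goto definition only gives $\Ass(C_\ell/\q_jC_\ell)\subseteq \Assh(C_\ell/\q_jC_\ell)\cup\{\m\}$, so $\m$ may well be associated and $\H^0_\m(C_\ell/\q_jC_\ell)$ need not vanish a priori. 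Neither condition~(3) (which concerns $M$, not $C_\ell$) nor the $d$-sequence property on $M$ forces these torsion pieces to be zero; asserting they vanish is essentially assuming the Cohen--Macaulayness you are trying to prove. Also, a small slip in the base case: for $d=2$ the module $D_{\ell-1}$ is the unmixed component of $M$, which can have dimension $1$; it is not in general $\H^0_\m(M)$. This does not affect the conclusion $N=D_{\ell-1}$, but your stated reason is off.

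The paper closes exactly this gap, and it is where the hypothesis that $\overline M/N$ is Cohen--Macaulay actually enters. Working one element $x$ at a time, let $\overline{D_{\ell-1}}$ be the image of $D_{\ell-1}$ in $M/xM$ and $N$ the unmixed component of $M/xM$. From condition~(1) one gets that $N/\overline{D_{\ell-1}}$ has finite length, so $C_\ell/xC_\ell=(M/xM)/\overline{D_{\ell-1}}$ and $(M/xM)/N$ agree in positive local cohomology; in particular $\H^i_\m(C_\ell/xC_\ell)=0$ for $0<i<d-1$ by the Cohen--Macaulay hypothesis. The long exact sequence for $0\to C_\ell\xrightarrow{\,x\,}C_\ell\to C_\ell/xC_\ell\to 0$ then yields a surjection $\H^1_\m(C_\ell)\xrightarrow{\,x\,}\H^1_\m(C_\ell)$, and Nakayama for the Artinian module $\H^1_\m(C_\ell)$ forces $\H^1_\m(C_\ell)=0$. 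This gives $\H^0_\m(C_\ell/xC_\ell)=0$, hence $N=\overline{D_{\ell-1}}$, so $C_\ell/xC_\ell=(M/xM)/N$ is Cohen--Macaulay and therefore $C_\ell$ is. This Nakayama step on $\H^1_\m$ is the missing idea in your argument; without it the ``regular-sequence depth count'' does not get off the ground past the first element.
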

\begin{proof}

We may assume that $d \ge 3$. Because the assumption of the corollary
is inherited to the module $M/\q_iM$, it is enough to prove the following
statement.

Let $x \in R$ be a Goto sequence of length one on $M$. Let $N$ denote the unmixed component of $M/x M$ and $d \ge 3$.
If $\H^i_\m(M/N)=0$ for $i\le d-2$ then $\H^i_\m(C_{\ell})=0$ for $i\le d-1$.

For a submodule $N$ of M, we denote $\overline N=(N+xM)/xM$ the submodule of $M/xM$. Since $x$ is a Goto sequence of length one on $M$,  $\Ass(C_\ell/xC_\ell)\subseteq \Assh(C_\ell/xC_\ell)\cup\{m\}$. Therefore $N/\overline D_{\ell-1}$ has a finite length. Since $\overline M/N$ is a Cohen-Macaulay module, $H^i_\fkm(M/D_{\ell-1}+xM)=0$ for all $0<i<d-1$. Therefore, we derive from the exact sequence
$$0 \to M/D_{\ell-1}\overset{.x}\to M/D_{\ell-1} \to  M/D_{\ell-1}+xM \to 0$$
the following exact sequence:
$$0 \to \H^0_\m(M/D_{\ell-1} + xM) \to \H^1_\m(M/D_{\ell-1})
\overset{.x}\to \H^1_\m(M/D_{\ell-1}) \to 0.$$
Thus  $\H^1_\fkm(M/D_{\ell-1})=0$, and so $N/\overline D_{\ell-1}=\H^0_\fkm(M/D_{\ell-1}+xM)=0$. Hence $N=\overline D_{\ell-1}$. Moreover, since $x$ is  $C_\ell=M/D_{\ell-1}$-regular and  $C_\ell/xC_\ell\cong \overline M/\overline D_{\ell-1} = \overline M/N$ a Cohen-Macaulay module, $C_\ell$  is a Cohen-Macaulay module.
\end{proof}

We now denote $r_j(M)=\ell_R((0):_{\H^j_\fkm(M)}\fkm)$ for all $j\in \Bbb Z$.

\begin{dfn}\rm
A system $\x$ of elements of $R$ is called {\it Goto sequence of type I} on $M$, if we have   
 $$r_{d-j}(M/\q_jM)\le r_{d-j-1}(M/\q_{j+1}M),$$ for all $0\le j\le s-1$.

\end{dfn}

Now, we explore the existence of Goto sequence of type I. We have divided the proof of the existence of Goto sequence into sequence of lemmas. First, we begin with the following result of S. Goto and Y. Nakamura  \cite{GN}.
 
\begin{lem}{\cite{GN}}\label{finitely}
Let $R$ be a homomorphic image of a Cohen-Macaulay local
ring and assume that $\Ass(R)\subseteq \Assh(R)\cup \{\fkm\}$. Then
$$\calF=\{\fkp\in\Spec(R)\mid\height_R(\fkp)> 1=\depth(R_\fkp)\}$$
is a finite set.
\end{lem}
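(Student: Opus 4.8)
The plan is to prove that the set
$$\calF=\{\fkp\in\Spec(R)\mid\height_R(\fkp)> 1=\depth(R_\fkp)\}$$
is finite by relating membership in $\calF$ to the (non-)Cohen--Macaulay locus of a suitable module and using that $R$ is a homomorphic image of a Cohen--Macaulay ring.

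\textbf{Step 1: Reduce to the canonical module.} Since $R$ is a homomorphic image of a Cohen--Macaulay local ring, $R$ has a canonical module $\omega_R$, and for each $\fkp\in\Spec(R)$ the localization $(\omega_R)_\fkp$ is (up to a shift) a canonical module of $R_\fkp$ — here I would invoke the standard behaviour of canonical modules under localization, together with the hypothesis $\Ass(R)\subseteq\Assh(R)\cup\{\fkm\}$, which guarantees that away from $\fkm$ the ring is equidimensional and catenary so that $\dim R_\fkp = \height_R\fkp$ for $\fkp\neq\fkm$. The point of passing to $\omega_R$ is that depth and the $S_2$-type conditions are detected by the canonical module: for $\fkp\neq\fkm$, $R_\fkp$ satisfies $S_2$ at $\fkp$ iff $\omega_{R_\fkp}$ is Cohen--Macaulay at $\fkp$, and more generally the locus where $R_\fkp$ fails to be Cohen--Macaulay is governed by the modules $\Ext^i_R(\omega_R,R)$ or by the non-CM locus of $\omega_R$.

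\textbf{Step 2: Identify $\calF$ with a subset of a closed non-CM locus.} The condition $\depth(R_\fkp)=1 < \height_R\fkp = \dim R_\fkp$ says precisely that $R_\fkp$ is not Cohen--Macaulay but has positive depth. First I would show $\calF\subseteq \{\fkp : R_\fkp \text{ is not Cohen--Macaulay}\}=:\mathrm{nCM}(R)$, which is a closed subset of $\Spec R$ because $R$ is a homomorphic image of a Cohen--Macaulay (hence excellent-enough for this purpose) local ring. Being closed, $\mathrm{nCM}(R)$ has finitely many minimal primes; so it suffices to show that every $\fkp\in\calF$ is a minimal prime of $\mathrm{nCM}(R)$, i.e. that for every $\fkq\subsetneq\fkp$ the local ring $R_\fkq$ \emph{is} Cohen--Macaulay. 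This is where $\depth(R_\fkp)=1$ does the work: if some $\fkq\subsetneq\fkp$ lay in $\mathrm{nCM}(R)$, then $R_\fkq$ would be non-Cohen--Macaulay of dimension $<\height_R\fkp$; one then has to derive a contradiction with $\depth R_\fkp = 1$. The cleanest route is to use that $\Ass(R)\subseteq\Assh(R)\cup\{\fkm\}$ forces $R_\fkp$ to have no embedded primes and to be equidimensional, so $\depth R_\fkp=1$ with $\height\fkp\ge 2$ means $H^0_{\fkp R_\fkp}(R_\fkp)=0$ but $H^1_{\fkp R_\fkp}(R_\fkp)\neq 0$; then a general element argument shows that modding out one parameter keeps the depth small and propagates non-Cohen--Macaulayness downward, contradicting minimality only if... — so in fact the correct statement is that $\calF$ consists exactly of the height-$2$ primes where depth drops, i.e. $\calF$ is contained in the set of primes of height $2$ that are associated to $R/(x)$ for a parameter $x$, a finite set. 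I would localize and compare with $\mathrm{Assh}$ of $R/xR$.

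\textbf{Step 3: Conclude via associated primes of a hyperplane section.} Concretely: pick any parameter element $x$ avoiding the minimal primes of $R$ (possible since $\Ass(R)\setminus\{\fkm\}=\Assh(R)$). For $\fkp\in\calF$ with $\height_R\fkp\ge 2$, since $\depth R_\fkp=1$ there is $x\in\fkp$ that is a nonzerodivisor on $R_\fkp$ with $\fkp R_\fkp\in\Ass(R_\fkp/xR_\fkp)$, i.e. $\fkp\in\Ass(R/xR)$ after suitable choice — but a single $x$ need not work for all $\fkp$ simultaneously. The fix is to cover: each $\fkp\in\calF$ is an associated prime of $R/xR$ for \emph{some} nonzerodivisor $x\in\fkm$, but to get finiteness I would instead argue that $\calF$ coincides with the set of height-two primes in the support of $\bigoplus_{i<d}\Ext^{i}_R(\omega_R, R)$ or of the non-$S_2$-ification cokernel $\omega_R^{**}/\omega_R$ — a finitely generated module whose support is closed, whose height-two primes are finite in number. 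The hard part of the argument, and the step I expect to be the main obstacle, is \textbf{this identification}: showing that $\fkp\in\calF$ (a condition on $\depth R_\fkp$ versus $\height\fkp$) is equivalent to $\fkp$ being a minimal prime of the support of an explicit finitely generated module built from $R$ and $\omega_R$, cleanly enough that finiteness of the height-two part follows. Everything else — localization of canonical modules, closedness of non-CM loci for homomorphic images of Cohen--Macaulay rings, and the reduction using $\Ass(R)\subseteq\Assh(R)\cup\{\fkm\}$ — is standard commutative algebra that I would cite from \cite{GN} or the literature on excellent rings rather than reprove.
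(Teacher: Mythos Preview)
The paper does not prove this lemma; it is quoted directly from Goto--Nakamura \cite{GN} with no argument supplied, so there is nothing in the paper itself to compare your sketch against. What follows is an assessment of the sketch on its own terms.

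Your Step~2 contains a genuine error. You try to show that every $\fkp\in\calF$ is a \emph{minimal} prime of the non--Cohen--Macaulay locus, deducing this from $\depth R_\fkp=1$. That implication is false: knowing $\depth R_\fkp=1$ says nothing about Cohen--Macaulayness of $R_\fkq$ for $\fkq\subsetneq\fkp$. Concretely, an unmixed local ring of dimension $4$ with $\depth R=1$ may well possess a height-$2$ prime $\fkq$ with $\depth R_\fkq=1$; then both $\fkq$ and $\fkm$ lie in $\calF$, yet $\fkm$ is not minimal in the non-CM locus since $\fkq\subsetneq\fkm$ already lies there. Your ``propagation downward'' argument has no content, and the finiteness you want cannot be extracted from mere closedness of that locus.

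In Step~3 you sense the right shape of the argument but do not land it, and the module you name, $\omega_R^{**}/\omega_R$, is not the correct one. The clean route (essentially that of \cite{GN}) is: replace $R$ by $R/\H^0_\fkm(R)$, which leaves $R_\fkp$ unchanged for $\fkp\neq\fkm$ and makes $R$ unmixed; then form the $S_2$-ification $R'=\Hom_R(\omega_R,\omega_R)$ and set $C=R'/R$. For $\fkp\in\calF$ with $\fkp\neq\fkm$ one has $\dim R'_\fkp=\dim R_\fkp\ge 2$, hence $\depth R'_\fkp\ge 2$ because $R'$ satisfies $S_2$; applying the depth lemma to $0\to R_\fkp\to R'_\fkp\to C_\fkp\to 0$ together with $\depth R_\fkp=1$ forces $\depth C_\fkp=0$, i.e.\ $\fkp\in\Ass_R C$. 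Thus $\calF\subseteq\Ass_R C\cup\{\fkm\}$, a finite set. The ``main obstacle'' you flagged is therefore a two-line depth count once the correct auxiliary module is in hand.
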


The next proposition shows the existence of a special  element which is useful for the existence of Goto sequence. 

\begin{prop}\label{exists element}
Let $R$ be a homomorphic image of a Cohen-Macaulay
local ring and $I$  an $\fkm$-primary ideal of $R$. Assume that $\calF=\{M_i\}_{i=0}^{\ell}$ is a finite filtration of submodules of $M$ such that $\Ass L_i\subseteq \Assh L_i\cup\{\fkm\}$, where $L_i=M_i/M_{i-1}$.  Then there exists an element $x\in I$ 
satisfies the following conditions
\begin{enumerate}
\item[$(1)$]   $\Ass(L_i/x^n L_i)\subseteq \Assh(L_i/x^n L_i)\cup \{\fkm\}$, For all $i=0,\ldots,\ell-1$.
\item[$(2)$] $x\not\in \fkp$,  for all $\fkp\in\Ass(M)-\{\fkm\}$.
\item[$(3)$]  $(0):_{L_i}x=\H^0_\m(L_i)$ and $(0):_Mx=\H^0_\m(M)$, for all $i=0,\ldots,\ell-1$,
\end{enumerate}

\end{prop}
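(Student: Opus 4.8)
The plan is to produce the element $x$ by a prime-avoidance argument, after first reducing to the situation where the ambient module has no embedded components outside $\{\fkm\}$. First I would enlarge the given filtration by inserting $M$ at the top (so that condition $(2)$ and the second equality in $(3)$ fit into the same pattern as the $L_i$), and set $N=\H^0_\fkm(M)$, $\bar M=M/N$; similarly pass to $\bar L_i$ where appropriate. Since each $L_i$ satisfies $\Ass L_i\subseteq\Assh L_i\cup\{\fkm\}$, killing the finite-length part $\H^0_\fkm(L_i)$ produces a module all of whose associated primes have dimension $d_i=\dim L_i$; so after this reduction I only need an $x$ that is a parameter (i.e.\ avoids every minimal prime of the relevant supports) and satisfies the $\Ass(L_i/x^nL_i)$ condition.

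The heart of the matter is arranging $(1)$. Here I would invoke Lemma~\ref{finitely}: applying it to each quotient $R/\Ann_R L_i$ (each of which is again a homomorphic image of a Cohen--Macaulay local ring with $\Ass\subseteq\Assh\cup\{\fkm\}$ once we have cleaned out the finite-length part), the set
$$\calF_i=\{\fkp\in\Supp_R L_i\mid \height(\fkp/\Ann L_i)>1=\depth (L_i)_\fkp\}$$
is finite. The point of $(1)$ is precisely that $x$ should avoid, for each $i$, the primes $\fkp$ in $\Supp L_i$ of dimension $d_i-1$ at which depth would drop to $0$ after cutting by $x$; these are governed by $\calF_i$ together with the (finitely many) minimal primes of $L_i$ and of $L_i/xL_i$. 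So I would take the finite set $\calP$ to be the union over all $i$ of: the minimal primes of $L_i$ of dimension $d_i$, the primes in $\calF_i$, and the finitely many primes in $\Ass M\setminus\{\fkm\}$. Provided $I\not\subseteq\fkp$ for every $\fkp\in\calP$ (which holds because $I$ is $\fkm$-primary and every $\fkp\in\calP$ is a nonmaximal prime), prime avoidance gives $x\in I$ with $x\notin\fkp$ for all $\fkp\in\calP$. Then $(2)$ is immediate, $(3)$ follows because $(0):_{L_i}x$ is supported only at $\fkm$ once $x$ avoids the non-$\fkm$ associated primes of $L_i$ (and the non-$\fkm$ associated primes of $L_i/\H^0_\fkm(L_i)$ are exactly those of $L_i$ of maximal dimension), and for $(1)$ one checks that any $\fkp\in\Ass(L_i/x^nL_i)$ with $\fkp\ne\fkm$ either contains a minimal prime of $L_i$ not containing $x$ — forcing $\dim R/\fkp=d_i-1$ — and then $\depth (L_i)_\fkp\ge 1$ by construction (since $\fkp\notin\calF_i$), so $(L_i/xL_i)_\fkp$ is Cohen--Macaulay of dimension $0$, i.e.\ $\fkp\in\Assh(L_i/x^nL_i)$. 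Replacing $x$ by $x^n$ changes none of this, since $x^n$ avoids the same primes and $(0):x=(0):x^n$ on a module where $x$ is a nonzerodivisor modulo finite length.

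The step I expect to be the main obstacle is the bookkeeping that makes the depth-one condition in Lemma~\ref{finitely} match exactly the primes one must avoid to keep $\Ass(L_i/x^nL_i)\subseteq\Assh\cup\{\fkm\}$ — in particular handling the primes of $L_i/xL_i$ that are \emph{not} in $\Ass L_i$, which is where the superheight-one locus $\calF_i$ enters, and verifying that this locus is stable enough that the \emph{same} $x$ works for all powers $x^n$ simultaneously. One subtlety is that $\dim (L_i/xL_i)$ could in principle exceed $d_i-1$ if $x$ failed to avoid some minimal prime; avoiding the listed minimal primes rules this out. A second subtlety is that one must run this for $L_i$ and also for the global module $M$ at once, so the finite set $\calP$ must be assembled before the single application of prime avoidance — but since finite unions of finite sets are finite, and $I$ is $\fkm$-primary, there is no real obstruction, only care in organizing the cases $\fkp=\fkm$ versus $\fkp\ne\fkm$.
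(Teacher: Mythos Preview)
Your overall strategy---apply Lemma~\ref{finitely} to each $R/\Ann L_i$, collect the finitely many bad primes, and choose $x\in I$ by prime avoidance---matches the paper's. The gap is in your verification of condition~$(3)$. Having $x$ avoid every prime in $\Ass L_i\setminus\{\fkm\}$ ensures only that $(0):_{L_i}x$ has finite length, i.e.\ the inclusion $(0):_{L_i}x\subseteq \H^0_\fkm(L_i)$; the reverse inclusion requires $x\cdot\H^0_\fkm(L_i)=0$, and a generic $x\in I$ has no reason to annihilate this finite-length piece. For instance with $R=k[[t]]$, $L=R/(t^2)\oplus R$, $x=t$, one has $\H^0_\fkm(L)=R/(t^2)\oplus 0$ while $(0):_L t=(t)/(t^2)\oplus 0$ is strictly smaller. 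Your closing assertion that ``$(0):x=(0):x^n$ on a module where $x$ is a nonzerodivisor modulo finite length'' is false for the same reason: the colons $(0):_{L_i}x^n$ genuinely increase with $n$ until they stabilise at $\H^0_\fkm(L_i)$.

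The paper repairs this precisely by exploiting that stabilisation. After choosing $y\in I$ by prime avoidance, one takes $n_0$ with $(0):_{L_i}y^n=(0):_{L_i}y^{n_0}$ and $(0):_M y^n=(0):_M y^{n_0}$ for all $n\ge n_0$ and all $i$ (Noetherianness), and sets $x=y^{n_0+1}$. Then $(0):_{L_i}x=(0):_{L_i}x^2$, and since $(0):_{L_i}x$ has finite length while $\H^0_\fkm(L_i)\subseteq (0):_{L_i}x^N$ for $N$ large, the stabilisation yields equality. The identity $(0):_Mx=\H^0_\fkm(M)$ is then obtained by induction along the filtration via the exact sequences $0\to M_{i-1}\to M_i\to L_i\to 0$. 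Once this ``raise to a stable power'' step is inserted, the rest of your outline (including the use of $\calF_i$ and catenarity for condition~$(1)$) proceeds essentially as in the paper.
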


\begin{proof}

Set $I_i=\Ann(L_i)$, and $R_i=R/I_i$, then  $\Ass(R_i)\subseteq \Assh(R_i)\cup\{\fkm\}$ and $\dim R/I_i> \dim R/I_{i+1}$ for all  $i=0,\ldots,s-1$. Moreover, we have 
$$\Ass(R_i)=\Ass(L_i)=\{\fkp\in\Spec(R)\mid\fkp\in\Ass(M)\text { and } \dim R/\fkp=\dim R/I_i=d_i\}\cup\{\fkm\}.$$
Set  $$\calF_i=\{\fkp\in\Spec(R)\mid I_i\subset\fkp \text{ and } \height_{R_i}(\fkp/I_i)> 1=\depth((L_i)_\fkp)\}.$$
By Lemma \ref{finitely} and the fact $\Ass(L_i)\subseteq\Assh(L_i)\cup\{\m\}$, we see that the set $$\{\fkp\in\Spec(R_i)\mid \height_{R_i}(\fkp)> 1=\depth((L_i)_\fkp)\}$$
is finite, and so that $\calF_i$ are a finite set for all $i=1,\ldots,\ell$. Put $\calF=\Ass(M)\cup\bigcup\limits_{i=1}^t\calF_i\setminus\{\fkm\}$. By the Prime Avoidance Theorem, we can choose $y\in I$ such that   $y\not\in\bigcup\limits_{\fkp\in\calF }\fkp$ and $\dim M_i/y M_i=\dim M_i-1$ for all $i=1,\ldots,\ell$.  On the other hand,  
we can choose an integer $n_0$ such that $(0):_M y^n=(0):_M y^{n_0}$ and $(0):_{L_i} y^n=(0):_{L_i} y^{n_0}$, for all $n\ge n_0$ and $i=1,\ldots,\ell$. Put $x=y^{n_0+1}$. Then we have $x\not\in\bigcup\limits_{\fkp\in\calF }\fkp$ and  $(0):_{L_i} x^2=(0):_{L_i} x$ for all $i=1,\ldots,\ell$. 
Now we show that $x$ have the conditions as required.  

First let us  prove the condition $(1)$. To this end, consider $\fkp\in\Ass(N_i/x N_i)$ with $\fkp\not=\fkm $. Then we have $\depth(L_i/xL_i)_\fkp=0$. 
 Hence $\depth(L_i)_\fkp=1$. It implies that $\height_{R_i}(\fkp)=1$, since $\fkp\not\in\calF_i$. By the assumption $R_i$ is a catenary ring, therefore 
$$\dim R/\fkp= \dim R_i-\height_{R_i}(\fkp)=\dim R_i/x R_i=\dim L_i/x L_i.$$ Hence $\fkp\in \Assh (L_i/x L_i)$.

Since the condition $(2)$ is trivial, it remains to prove the condition $(3)$. Take $\fkp\in\Ass_R(0):_{L_i}x$ with $\fkp\not=\fkm$. Hence $((0):_{L_i}x)_\fkp=(0)$ and this is a contradiction. It implies that $(0):_{N_i}x$ is finite length. Since $(0):_{L_i} x^2=(0):_{L_i} x$, we have $(0):_{L_i} x=\H^0_\fkm(L_i)$. It follows from the following exact sequence
$$0\to M_{i-1}\to M_i\to L_{i}\to 0$$
and $x\H^0_\fkm(L_i)=0$ for all $i=1,\ldots,\ell$ that the following sequence
$$0\to \H^0_\fkm(M_{i-1})\to \H^0_\fkm(M_i)\to \H^0_\fkm(L_{i})$$
$$\text{and  } 0\to (0):_{M_{i-1}}x\to (0):_{M_{i}}x\to (0):_{L_{i}}x $$
are exact. By induction and $(0):_Mx=(0):_Mx^2$, we have $(0):_Mx=\H^0_\fkm(M)$ and this completes the proof.
\end{proof}

The existence of Goto sequence is established by our next Corollary.

\begin{cor}\label{exists001}
Assume that $R$ is a homomorphic image of a Cohen-Macaulay
local ring and $I$  an $\fkm$-primary ideal of $R$.  Then there exists a system $\x=x_1,x_2,\ldots,x_s$ of elements of $I$ such that $\x$ is a Goto sequence on $M$.

\end{cor}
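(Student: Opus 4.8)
The plan is to construct the Goto sequence $x_1, x_2, \ldots, x_s$ one element at a time, using Proposition \ref{exists element} repeatedly, while carrying along the dimension filtration at each stage. First I would set up the induction: suppose inductively that $x_1, \ldots, x_j$ have already been chosen so that conditions $(1)$, $(2)$, $(3)$ in the definition of Goto sequence hold for all indices up to $j-1$, and so that $x_1,\ldots,x_j$ is part of a system of parameters of $M$ (so $\q_j M$ has dimension $d - j$ wherever it is supported on $M$). The base case $j = 0$ is vacuous. By Lemma \ref{property}$(4)$ the inductive hypothesis is really saying that it suffices to produce, for the module $M' = M/\q_j M$, a single element $x_{j+1} \in I$ that is the first term of a Goto sequence on $M'$ — i.e.\ that satisfies the three conditions with $M'$ in place of $M$ and with respect to the dimension filtration $\calD'$ of $M'$.

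Next I would apply Proposition \ref{exists element} to the module $M'$ with the filtration $\calF' = \calD'$ its own dimension filtration, whose quotients $C_i' = D_i'/D_{i-1}'$ satisfy $\Ass C_i' \subseteq \Assh C_i' \cup \{\fkm\}$ (this is part of the standard structure of the dimension filtration, cf.\ Proposition \ref{d1}). Actually, to get condition $(1)$ of the Goto sequence — the requirement $\Ass(C_i/\q_j C_i) \subseteq \Assh(C_i/\q_j C_i) \cup \{\fkm\}$ for the \emph{original} $C_i$ — I should be slightly more careful: I would feed into Proposition \ref{exists element} not just the dimension filtration of $M'$ but a common refinement that also sees the images of the original $D_i$, so that conclusion $(1)$ of the proposition delivers exactly condition $(1)$ of the Goto sequence for all $i$ simultaneously. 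The proposition then hands back an element $x_{j+1} \in I$ with: $\Ass(L_i/x_{j+1}^n L_i) \subseteq \Assh(\cdot) \cup \{\fkm\}$ for each layer $L_i$ (this gives Goto condition $(1)$); $x_{j+1} \notin \fkp$ for all $\fkp \in \Ass(M') \setminus \{\fkm\}$ (this gives the second half of Goto condition $(3)$, and, together with $x_{j+1}\in I$ being $\fkm$-primary, keeps $x_1,\ldots,x_{j+1}$ part of a system of parameters); and $(0):_{M'} x_{j+1} = \H^0_\fkm(M')$ (this gives $\q_j : x_{j+1} = \H^0_\fkm(M/\q_j M)$, the first half of Goto condition $(3)$). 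Condition $(2)$, namely $x_{j+1} D_i = 0$ when $d_i < j+1 \le d_{i+1}$, I would handle exactly as in the existence proof for distinguished systems of parameters: on the range where $\q_{j+1}$ annihilates $D_i$, one either already has $\q_j D_i = 0$ from the inductive step or one replaces $x_{j+1}$ by a suitable power $x_{j+1}^{n}$ (which by Lemma \ref{property} keeps everything a Goto sequence, and this is why Proposition \ref{exists element}$(1)$ is stated for all powers $x^n$) so that $x_{j+1}^{n}$ kills the finite-length or low-dimensional piece in question.

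Iterating this $s$ times produces $x_1, \ldots, x_s \in I$ forming a Goto sequence on $M$; if one wants $s = d$ one simply runs the induction until the parameter system is complete, and Lemma \ref{property}$(3)$ then identifies it as a distinguished system of parameters. The main obstacle I expect is bookkeeping rather than any new idea: one must verify that the three defining conditions are preserved under passing from $M$ to $M/\q_j M$ and that the layer filtration used at each stage genuinely refines the relevant images of $\calD$, so that the single-step output of Proposition \ref{exists element} assembles into the full set of conditions $(1)$–$(3)$ for every $i$ and every $j$ at once. In particular one needs that $\{D_i/\q_j M \cap D_i\}$ (or the appropriate image filtration) behaves well with respect to dimension — this is where the hypotheses that $R$ is a homomorphic image of a Cohen–Macaulay ring (hence catenary, so dimensions of quotients drop by exactly one under a parameter) and that $I$ is $\fkm$-primary are both used. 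Once that compatibility is in place the corollary follows by a clean finite induction.
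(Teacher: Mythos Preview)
Your overall strategy---induction on $s$, invoking Proposition~\ref{exists element} at each step---matches the paper's. The substantive gap is in your treatment of condition $(2)$ of the Goto sequence, namely $x_{j}D_i=0$ when $d_i<j\le d_{i+1}$. You propose to ``replace $x_{j+1}$ by a suitable power $x_{j+1}^n$ so that $x_{j+1}^n$ kills the finite-length or low-dimensional piece in question.'' But $D_i$ has dimension $d_i$, which is in general positive; it is \emph{not} of finite length, and no power of $x_{j+1}$ annihilates $D_i$ unless $x_{j+1}\in\sqrt{\Ann D_i}$ already. Nothing in Proposition~\ref{exists element}, applied to $M'=M/\q_jM$ with whatever filtration you feed it, forces the output element to lie in $\sqrt{\Ann D_i}$. (Also, Lemma~\ref{property} says nothing about powers of Goto sequences remaining Goto sequences; the ``$x^n$'' in Proposition~\ref{exists element}(1) is there for a different reason.)

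The paper's fix is to build the annihilation requirement into the choice of $x_j$ from the outset. At stage $j$ with $d_i<j\le d_{i+1}$, the previously chosen $x_1,\ldots,x_{d_i}$ already form a system of parameters of $D_i$, so $\Ann(D_i)+\q_{j-1}$ is $\fkm$-primary. One can then apply Proposition~\ref{exists element} with an $\fkm$-primary ideal contained in $I\cap\Ann(D_i)$ (more precisely, work over $R/\q_{j-1}$ and use that the image of $\Ann(D_i)$ is primary there), producing $x_j\in I\cap\Ann(D_i)$ that simultaneously satisfies the avoidance and colon conditions. That single observation---that one may demand $x_j\in\Ann(D_i)$ without losing the ability to avoid the finitely many bad primes---is what makes condition $(2)$ go through, and it is exactly what is missing from your sketch.

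A secondary issue: applying Proposition~\ref{exists element} to the \emph{dimension filtration of $M'$} does not directly yield Goto condition $(1)$, which concerns the original $C_i=D_i/D_{i-1}$ modulo $\q_j$, not the layers of the dimension filtration of $M/\q_jM$. The paper (implicitly) uses the filtration with layers $C_i/\q_{j-1}C_i$, relying on the inductive hypothesis that $\Ass(C_i/\q_{j-1}C_i)\subseteq\Assh(C_i/\q_{j-1}C_i)\cup\{\fkm\}$ to meet the hypothesis of Proposition~\ref{exists element}. Your ``common refinement'' idea is heading in the right direction but is not the same thing; the relevant filtration must have \emph{exactly} the $C_i/\q_{j-1}C_i$ as quotients, and this needs to be set up explicitly.
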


\begin{proof}
We prove this by induction on $s$, the case in which $s=1$ having been dealt  with in Lemma \ref{exists element}. So we suppose that $s=j\ge 2$ and that the result has been proved for smaller values of $s$. Suppose that $d_{i}<j\le d_{i+1}$ for some $i$. We see immediately from this induction hypothesis that
$$\Ass(N_i/\q_{j-1} N_i)\subseteq \Assh(N_{j-1}/\q_{j-1}N_i)\cup \{\fkm\},$$
where $\q_{j-1}=(x_1,\ldots,x_{j-1})$, for all $i=0,\ldots,\ell-1$. Moreover the sequence $x_1,x_2,\ldots,x_{d_i}$ is a system of  parameters  of $D_i$. Therefore $\Ann(D_i)+\q_{j-1}$ is $\m$-primary ideals. So that, by Lemma \ref{exists element}, there exists an element $x_j\in I\cap \Ann(D_i)$, as required. This completes the inductive step, and the proof.
\end{proof}

Let $\fkq = (x_1, x_2,\ldots,x_d)$ be a parameter ideal in $R$ and let $M$ be an $R$-module. For
each integer $n\geq 1$ we denote by $\x^n$ the sequence $x^n_1, x^n_2,\ldots,x^n_d$. Let $K^{\bullet}(x^n)$ be the
Koszul complex of $R$ generated by the sequence $\x^n$ and let
$H^{\bullet}(\x^n;M) = H^{\bullet}(\Hom_R(K^{\bullet}(\x^n),M))$
be the Koszul cohomology module of $M$. Then for every $p\in\Bbb Z$ the family $\{H^p(\x^n;M)\}_{n\ge 1}$
naturally forms an inductive system of $R$-modules, whose limit
$$H^p_\fkq=\lim\limits_{n\to\infty} H^p(\x^n;M)$$
is isomorphic to the local cohomology module
$$H^p_\fkm(M)=\lim\limits_{n\to\infty} \Ext_R^p(R/\fkm^n,M)$$
For each $n\geq 1$ and $p \in\Bbb Z$ let $\phi^{p,n}_{\x,M}:H^p(\x^n;M)\to H^p_\fkm(M)$ denote the canonical
homomorphism into the limit.


\begin{dfn}[\cite{GSa1} Lemma 3.12]\label{sur}
{\rm Let $R$ be a Noetherian local ring with the maximal ideal $\fkm$ and $
\dim R=d \ge1$. Let $M$ be a finitely generated $R$-module. Then there exists an integer $n_0$ 
such that for all systems of parameters $\x=x_1,\ldots,x_d$  for $R$ contained in $\fkm^{n_0}$ and for all $p\in \Bbb Z$, the canonical homomorphisms
$$\phi^{p,1}_{\x,M}:H^p(\x,M)\to H^p_\fkm(M)$$
into the inductive limit are surjective on the socles.
The least integer $n_0$ with this property is called a Goto number of $R$-module $M$ and denote by $\rmg(M)$.}
\end{dfn}


With this notation we have the following result.

\begin{lem}[\cite{GS1}, Lemma 1.7]\label{split}  Let  $M$  be a finitely generated $R$-module and $x$ an $M$-regular element and $\x=x_1,\ldots,x_r$ be a system of elements in $R$ with $x_1= x$. Then there exists a splitting  exact sequence for each $p \in\Bbb Z$,
$$0\to H^p(\x;M)\to H^p(\x;M/xM)\to H^{p+1}(\x;M)\to0.$$ 
\end{lem}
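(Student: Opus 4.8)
The plan is to realize the asserted sequence as the family of associated short exact sequences of a single short exact sequence of cochain complexes, and then to split them using the contracting homotopies built into the Koszul complex.

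First I would use that $x$ is $M$-regular to get the short exact sequence $0\to M\xrightarrow{x}M\to M/xM\to 0$. Applying $\Hom_R(K^\bullet(\x),-)$, which preserves exactness because every term of the Koszul complex $K^\bullet(\x)=\bigwedge^\bullet R^r$ is a finite free $R$-module, produces a short exact sequence of cochain complexes
$$0\to \Hom_R(K^\bullet(\x),M)\xrightarrow{x}\Hom_R(K^\bullet(\x),M)\xrightarrow{\pi}\Hom_R(K^\bullet(\x),M/xM)\to 0,$$
with $\pi$ induced by the projection $M\to M/xM$. Write $C^\bullet=\Hom_R(K^\bullet(\x),M)$ for brevity. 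Passing to the long exact cohomology sequence and using that $x=x_1$ lies in the ideal $(x_1,\ldots,x_r)$, hence acts as $0$ on every $H^p(\x;M)$ (Koszul cohomology is annihilated by that ideal), the long exact sequence breaks up into
$$0\to H^p(\x;M)\xrightarrow{\pi_*}H^p(\x;M/xM)\xrightarrow{\partial}H^{p+1}(\x;M)\to 0$$
for every $p\in\Bbb Z$, where $\partial$ is the connecting homomorphism. It remains to split these.

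For the splitting I would use the chain-level refinement of the fact that $x_1$ annihilates Koszul cohomology: the contraction (interior product) with the first basis vector of $\bigwedge^\bullet R^r$ is an $R$-linear homotopy on $K_\bullet(\x)$ between multiplication by $x_1$ and $0$, and applying $\Hom_R(-,M)$ to it yields an explicit degree $-1$ operator $s\colon C^\bullet\to C^{\bullet-1}$ with $d_C s+s\,d_C=x\cdot\mathrm{id}$. Given a cocycle $z\in C^{p+1}$ one then has $d_C(sz)=xz$, so $\pi(sz)$ is a cocycle in $\Hom_R(K^\bullet(\x),M/xM)$ because its differential is $\pi(d_C(sz))=\pi(xz)=0$; moreover the assignment $[z]\mapsto[\pi(sz)]$ is well defined on cohomology classes, since if $z=d_C w$ then $\pi(sz)$ is a coboundary. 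Finally, because $sz$ is itself a $\pi$-lift of $\pi(sz)$ with $d_C(sz)=x\cdot z$, the connecting map returns $\partial[\pi(sz)]=[z]$; thus $[z]\mapsto[\pi(sz)]$ is a section of $\partial$, and the sequence splits.

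The one genuinely delicate point is this last step — producing the splitting rather than merely the short exact sequence. The key input is that the vanishing of $x_1$ on Koszul cohomology is witnessed by an actual chain homotopy, the contraction operator on the exterior algebra, which survives $\Hom_R(-,M)$; once that homotopy is available, the section of $\partial$ is essentially forced. Everything else is the standard long-exact-sequence formalism for a short exact sequence of complexes of free modules, together with the annihilator property of Koszul (co)homology.
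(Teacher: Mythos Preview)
Your argument is correct and is the standard proof of this fact. The paper does not give its own proof of this lemma; it simply cites \cite{GS1}, Lemma 1.7, so there is nothing to compare against beyond noting that your route---the long exact sequence from $0\to M\xrightarrow{x}M\to M/xM\to 0$, broken into short exact pieces by the annihilator property of Koszul cohomology, with the splitting supplied by the interior-product homotopy---is exactly the classical one.
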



\begin{lem}\label{2.7}
 Let $M$ be a finitely generated $R$-module. Assume that $x$ is an $M$-regular element of $M$ such that $x\in \m^{\rmg(M)}$. Then
we have $$\rmg(M/xM)\le \rmg(M),$$
and $$r_i(M)\le r_{i-1}(M/xM)$$
for all $i\in \Bbb Z$
\end{lem}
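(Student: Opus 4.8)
The statement splits into two parts: the comparison of Goto numbers $\rmg(M/xM)\le\rmg(M)$, and the comparison of the socle invariants $r_i(M)\le r_{i-1}(M/xM)$. The plan is to derive both from Lemma \ref{split}. Since $x$ is $M$-regular, Lemma \ref{split} gives, for any system of elements $\x=x_1,\dots,x_r$ in $R$ with $x_1=x$, a short exact sequence
$$0\to H^p(\x;M)\to H^p(\x;M/xM)\to H^{p+1}(\x;M)\to 0$$
which is moreover split; and passing to the inductive limit (which is exact) one gets the corresponding split short exact sequence of local cohomology modules
$$0\to H^p_\fkm(M)\to H^p_\fkm(M/xM)\to H^{p+1}_\fkm(M)\to 0.$$
The key point I would establish is that the two sequences fit into a commutative ladder whose vertical maps are the canonical maps $\phi^{p,n}$ into the limit; this is where one must check naturality of the splitting sequence of Lemma \ref{split} in $\x$, or at least naturality of the non-split sequence, since that suffices to get a map of exact sequences.

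\textbf{The Goto-number inequality.} Fix $n\ge\rmg(M)$ and take any system of parameters $y_1,\dots,y_d$ of $M/xM$ contained in $\fkm^n$; I want to show $\phi^{p,1}_{\underline y,\,M/xM}$ is surjective on socles for all $p$. Lift this to the system $x,y_1,\dots,y_d$ of $R$, which is then a system of elements with first entry $x\in\fkm^{\rmg(M)}$ (one may need to replace $x$ harmlessly, but $x$ being $M$-regular with $x\in\fkm^{\rmg(M)}$ is given) and the remaining entries in $\fkm^n\subseteq\fkm^{\rmg(M)}$. By the definition of $\rmg(M)$, $\phi^{p,1}_{x,\underline y,\,M}$ is surjective on socles for all $p$. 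Now chase the commutative ladder relating the $\x$-Koszul cohomology sequence to the local cohomology sequence: since the outer two vertical maps $\phi^{p}_{\cdot,M}$ and $\phi^{p+1}_{\cdot,M}$ are surjective on socles, the five lemma / snake-type diagram chase (using that the rows are exact and, in the bottom row, split) forces the middle map $\phi^{p}_{\cdot,\,M/xM}$ to be surjective on socles as well. Because the Koszul cohomology of a subsystem is computed by the relevant sub-Koszul complex, $H^p(x,\underline y;M/xM)$ and the limit map factor appropriately through $H^p(\underline y;M/xM)\to H^p_\fkm(M/xM)$ — here I would invoke that $x$ acts as zero on $M/xM$, so the Koszul complex on $(x,\underline y)$ over $M/xM$ decomposes and the relevant cohomology reduces to that of $\underline y$. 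Hence $n\ge\rmg(M/xM)$, giving $\rmg(M/xM)\le\rmg(M)$.

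\textbf{The socle inequality.} Apply the socle functor $(0):_{(-)}\fkm$ to the split short exact sequence $0\to H^{i-1}_\fkm(M)\to H^{i-1}_\fkm(M/xM)\to H^i_\fkm(M)\to 0$. Because the sequence is split, it stays short exact after applying $\Hom_R(R/\fkm,-)$, so lengths add:
$$r_{i-1}(M/xM)=r_{i-1}(M)+r_i(M)\ge r_i(M),$$
which is exactly the claimed inequality (and in fact gives the slightly stronger additive identity). This part is essentially immediate once the split sequence of local cohomology modules is in hand.

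\textbf{Main obstacle.} The only genuinely delicate point is the bookkeeping in the Goto-number argument: making precise the commutative ladder between the Koszul cohomology exact sequence of Lemma \ref{split} and the local cohomology exact sequence, and checking that surjectivity on socles transfers through the diagram. One must be careful that the system used to witness $\rmg(M)$ has $x$ as its first element and all entries in $\fkm^{\rmg(M)}$, and that reducing modulo $x$ is compatible with the transition maps of the inductive systems; I expect this to be routine but it is the step that needs the most care, and it is reasonable to cite \cite{GS1} for the analogous computation there.
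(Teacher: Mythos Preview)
Your overall strategy---combine the split Koszul sequence of Lemma \ref{split} with a commutative ladder to local cohomology---is the same as the paper's. The gap is in your claim that ``passing to the inductive limit (which is exact) one gets the corresponding split short exact sequence of local cohomology modules''
\[
0\to H^p_\fkm(M)\to H^p_\fkm(M/xM)\to H^{p+1}_\fkm(M)\to 0.
\]
This step fails for two reasons. First, the split sequence of Lemma \ref{split} for $\x^n=(x^n,x_2^n,\dots,x_d^n)$ has middle term $H^p(\x^n;M/x^nM)$, not $H^p(\x^n;M/xM)$, so the limit of these middle terms is not $H^p_\fkm(M/xM)$. Second, the genuine long exact sequence in local cohomology arising from $0\to M\xrightarrow{x}M\to M/xM\to 0$ need not break into short exact sequences at all (multiplication by $x$ on $H^p_\fkm(M)$ is generally nonzero), let alone split ones. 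Hence your ``additive identity'' $r_{i-1}(M/xM)=r_{i-1}(M)+r_i(M)$ is not justified, and the five-lemma step in your Goto-number argument, which explicitly uses that the bottom row is split, does not go through.

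The paper avoids this by never taking a limit of the split sequences. It fixes $\x=(x,x_2,\dots,x_d)$ with $x_2,\dots,x_d\in\fkm^{\rmg(M)}$ a system of parameters for $M/xM$, and uses only the commutativity of the right-hand square after applying $\Hom(k,-)$:
\[
\xymatrix{\Hom(k,H^i(\x;M/xM))\ar@{->>}[r]\ar[d]&\Hom(k,H^{i+1}(\x;M))\ar@{->>}[d]\\
\Hom(k,H^i_\fkm(M/xM))\ar[r]&\Hom(k,H^{i+1}_\fkm(M)).}
\]
The top arrow is onto because the Koszul sequence splits; the right arrow is onto because $\fkq\subseteq\fkm^{\rmg(M)}$. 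A two-step chase then shows the bottom arrow---which is $\Hom(k,-)$ applied to the connecting map of the (non-split) long exact sequence---is onto. That surjection alone gives $r_{i+1}(M)\le r_i(M/xM)$, with no need for a split bottom row. The paper then uses both vertical surjections in the full ladder to obtain the surjectivity of $\Hom(k,H^i(\x;M/xM))\to\Hom(k,H^i_\fkm(M/xM))$ for the Goto-number bound; the point is that the argument runs on the level of maps of socles and the long exact sequence in local cohomology, never on a purported splitting of that sequence.
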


\begin{proof}

Let $x_2,\ldots,x_d$ be a system of parameters of module $M/xM$ such that $x_i\in\m^{\rmg(M)}$. Put $\x=x_1,x_2,\ldots,x_d$ and $\q=(\x)$, where $x_1=x$. Since $x\in\m^{\rmg(M)}$, we have $\q\subseteq \m^{\rmg(M)}$.
By the definition of Goto number, we have  the canonical homomorphism
$$H^i(\x,M)\to H^i_\fkm(M)$$
into the inductive limit are surjective on the socles, for each $i\in \Bbb Z$. By the regularity of $x=x_1$ on $M$, it follows from the following sequence 
$$\xymatrix{0\ar[r]&M\ar[r]^{.x}&M\ar[r]&M/xM\ar[r]&0}$$
that there are induced the diagram
$$\xymatrix{0\ar[r]&H^i(\x;M)\ar[d]\ar[r]&H^i(\x,M/xM)\ar[r]\ar[d]&H^{i+1}(\x;M)\ar[r]\ar[d]&0\\
\ar[r]&H^i_\fkm(M)\ar[r]&H^i_\fkm(M/xM)\ar[r]&H^{i+1}_\fkm(M)\ar[r]&}$$
commutes, for all $i\in \Bbb Z$. 
 It follows from the above commutative diagrams and Lemma \ref{split} that after applying the functor $\Hom(k,*)$, we obtain the commutative diagram 
$$\xymatrix{\Hom(k,H^i(\x,M/xM))\ar[r]\ar[d]&\Hom(k,H^{i+1}(\x;M))\ar[r]\ar[d]&0\\
\Hom(k,H^i_\fkm(M/xM))\ar[r]&\Hom(k,H^{i+1}_\fkm(M))}$$
for all $i\in \Bbb Z$. Since the map $\Hom(k,H^{i+1}(\x;M))\to\Hom(k,H^{i+1}_\fkm(M))$ is surjective, so is the map $\Hom(k,H^i_\fkm(M/xM))\to\Hom(k,H^{i+1}_\fkm(M))$. Therefore the map $\Hom(k,H^{i}(\x;M/xM))\to\Hom(k,H^{i}_\fkm(M/xM))$ is surjective and $r_i(M)\le r_{i-1}(M/xM)$
for all $i\in \Bbb Z$. Thus for all systems $\x$ of parameters of module $M/xM$ such that $x_i\in\m^{\rmg(M)}$, we have
the map $\Hom(k,H^{i}(\x;M/xM))\to\Hom(k,H^{i}_\fkm(M/xM))$ is surjective for all $i\in \Bbb Z$. Hence we have
we have $$\rmg(M/xM)\le \rmg(M),$$
as required.

\end{proof}

\begin{cor}\label{leCMt}
 Let $M$ be a finitely generated $R$-module with $\dim M\ge 2$. Then there exists an integer $n$ such that for all parameter elements $x\in \m^n$, we have
$$r_d(M)\le r_{d-1}(M/xM).$$
\end{cor}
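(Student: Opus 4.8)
The plan is to derive the inequality from the defining property of the Goto number together with a compatibility between the connecting homomorphism relating $M$ and $M/xM$ and the canonical maps $\phi^{\bullet,1}$. Take $n=\rmg(M)$, so that $n$ depends only on $M$; let $x\in\m^n$ be a parameter element of $M$, complete it to a system of parameters $\x=x,x_2,\dots,x_d$ of $M$ with all $x_i\in\m^n$, and write $\x'=x_2,\dots,x_d$. Because $x$ is a parameter element, $(0:_Mx)$ is killed by $x$, hence supported in $\Supp(M/xM)$, so $\dim(0:_Mx)\le d-1$ and consequently $\H^d_\m(0:_Mx)=\H^{d+1}_\m(0:_Mx)=0$. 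From the short exact sequence
$$0\longrightarrow M^{*}\xrightarrow{\,x\,}M\longrightarrow M/xM\longrightarrow 0,\qquad M^{*}:=M/(0:_Mx),$$
we therefore get an isomorphism $\H^d_\m(M)\xrightarrow{\,\sim\,}\H^d_\m(M^{*})$ induced by $M\twoheadrightarrow M^{*}$; let $\delta\colon\H^{d-1}_\m(M/xM)\to\H^d_\m(M)$ be the connecting homomorphism of the displayed sequence, read back into $\H^d_\m(M)$ through that isomorphism. (Its image is $(0:_{\H^d_\m(M)}x)$, which contains $\Soc\H^d_\m(M)$ and has the same socle, though we will not need this directly.)

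The heart of the proof is the identity
$$\delta\circ\phi^{d-1,1}_{\x',M/xM}\;=\;\pm\,\phi^{d,1}_{\x,M}$$
of homomorphisms $M/\x M\to\H^d_\m(M)$, where one uses the canonical identifications $H^d(\x;M)=M/\x M=H^{d-1}(\x';M/xM)$ of top Koszul cohomology modules. I would establish it by passing to the \v{C}ech (equivalently, the direct‑limit‑of‑Koszul) model of local cohomology: applying $\check C^{\bullet}(\x;-)$ to the displayed short exact sequence realizes $\delta$, the Koszul‑to‑\v{C}ech comparison maps are compatible with it, and an explicit computation of the Koszul connecting map $H^{d-1}(\x;M/xM)\to H^d(\x;M^{*})$ shows that on the summand $H^{d-1}(\x';M/xM)=M/\x M$ it is, up to sign, the natural surjection $M/\x M\to M^{*}/\x M^{*}$; composing with $\phi^{d,1}_{\x,M^{*}}$ and using naturality with respect to $M\twoheadrightarrow M^{*}$ turns this into $\pm\phi^{d,1}_{\x,M}$. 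This verification — keeping track of the identification of the two top cohomologies, of the splitting $H^{\bullet}(\x;M/xM)\cong H^{\bullet}(\x';M/xM)\oplus H^{\bullet-1}(\x';M/xM)$ coming from $x\cdot(M/xM)=0$, and of the signs — is the step I expect to require the most care; everything else is formal bookkeeping.

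Granting the identity, the conclusion is immediate. By Definition \ref{sur}, applied to the system of parameters $\x\subseteq\m^{n}=\m^{\rmg(M)}$, the map $\phi^{d,1}_{\x,M}$ is surjective on socles, i.e.\ $\phi^{d,1}_{\x,M}\big(\Soc(M/\x M)\big)=\Soc\H^d_\m(M)$. Since $\phi^{d-1,1}_{\x',M/xM}$ and $\delta$ are $R$‑linear, they send socles into socles, so the identity gives $\Soc\H^d_\m(M)\subseteq\delta\big(\Soc\H^{d-1}_\m(M/xM)\big)\subseteq\Soc\H^d_\m(M)$, whence $\delta$ maps $\Soc\H^{d-1}_\m(M/xM)$ onto $\Soc\H^d_\m(M)$. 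Therefore
$$r_d(M)=\ell_R\big(\Soc\H^d_\m(M)\big)\le\ell_R\big(\Soc\H^{d-1}_\m(M/xM)\big)=r_{d-1}(M/xM),$$
which is the assertion. (Passing first to $M/\H^0_\m(M)$ — permissible because $\dim M\ge 2$, by the usual finite‑length‑kernel argument, which changes neither $r_d(M)$ nor $r_{d-1}(M/xM)$ — lets one assume in addition that $\depth M\ge 1$, but this is not needed for the argument above.)
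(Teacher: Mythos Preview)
Your argument is correct; the key identity $\delta\circ\phi^{d-1,1}_{\x',M/xM}=\pm\,\phi^{d,1}_{\x,M}$ does hold, and the verification in the \v Cech model is short once one observes that $\check C^{\bullet}(\x;M/xM)=\check C^{\bullet}(\x';M/xM)$ (every summand involving the variable $x_1=x$ vanishes since $x$ kills $M/xM$), so that lifting $[\bar m/(x_2\cdots x_d)]$ to $M$ and applying the \v Cech differential produces $\pm[m/(x_1\cdots x_d)]$ as claimed. The paper takes a different route: it first replaces $M$ by $M/\H^0_{\m}(M)$, noting that because $\dim M\ge 2$ this changes neither $r_d(M)$ nor $r_{d-1}(M/xM)$, and then simply invokes Lemma~\ref{2.7}, which was proved under the hypothesis that $x$ is $M$-regular and which packages the same Goto-number surjectivity via the commutative ladder between the split Koszul sequence of Lemma~\ref{split} and the long exact local-cohomology sequence of $0\to M\xrightarrow{x}M\to M/xM\to 0$. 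What your approach buys is that it never needs $x$ to be a non-zerodivisor: by working with $M^{*}=M/(0:_Mx)$ and using only the top-degree isomorphism $\H^d_{\m}(M)\cong\H^d_{\m}(M^{*})$, you get the inequality for every parameter element $x\in\m^{\rmg(M)}$ directly, so the reduction to positive depth is, as you remark, unnecessary. What the paper's approach buys is brevity, since once Lemma~\ref{2.7} is available the corollary becomes a two-line reduction.
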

\begin{proof}
Since $\dim M\ge 2$ and $x$ is a parameter element of $M$, we have $\H^d_\m(M)= \H^d_\m(M/\H^0_\m(M))$ and $\H^{d-1}_\m(M/xM)=\H^{d-1}_\m(M/xM+\H^0_\m(M))$. Therefore we have been working under the assumption that $\depth M\ge 0$. Then by Lemma \ref{2.7}, we have $$r_d(M)\le r_{d-1}(M/xM),$$
and the proof is complete.

\end{proof}


Addition, the existence of Goto sequence of type I is established by our next Proposition.

\begin{prop}\label{exists}
Assume that $R$ is a homomorphic image of a Cohen-Macaulay
local ring and $I$  an $\fkm$-primary ideal of $R$.  Then there exists a system $\x=x_1,x_2,\ldots,x_s$ of elements of $I$ such that $\x$ is a Goto sequence of type I on $M$.

\end{prop}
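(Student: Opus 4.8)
The plan is to build the sequence $\x = x_1, x_2, \ldots, x_s$ one element at a time, combining the construction of a Goto sequence from Corollary~\ref{exists001} with the depth-type inequality supplied by Corollary~\ref{leCMt}. We argue by induction on $s$. The base case $s=1$ is already contained in (the proof of) Lemma~\ref{exists element}/Corollary~\ref{exists001}, since a single element imposes no condition of type I (the defining inequalities $r_{d-j}(M/\q_jM)\le r_{d-j-1}(M/\q_{j+1}M)$ for $0\le j\le s-1$ are vacuous when $s=1$ in the sense that only $j=0$ occurs). For the inductive step, suppose $s\ge 2$. The key point is that a Goto sequence of type I can be chosen inside {\it any} prescribed power $\fkm^n$ of the maximal ideal: by enlarging the integer $n$ we may invoke Corollary~\ref{leCMt} uniformly.

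First I would choose $x_1$. By Corollary~\ref{leCMt} applied to $M$ (and to each of the relevant submodules $D_i$ and subquotients $C_i$ appearing in the dimension filtration, of which there are finitely many), there is an integer $n_1$ such that every parameter element $x\in\fkm^{n_1}$ satisfies $r_{\dim L}(L)\le r_{\dim L - 1}(L/xL)$ for each such module $L$ of dimension $\ge 2$. Simultaneously, following the argument of Corollary~\ref{exists001} (which itself rests on Proposition~\ref{exists element} and the Prime Avoidance Theorem), one may arrange that $x_1$ is the first term of a Goto sequence on $M$, and in fact one may take $x_1\in I\cap\fkm^{n_1}$ by replacing a first choice $y$ by a high power; note that replacing $y$ by $y^{k}$ does not affect membership in the relevant avoidance set, and pushes $x_1$ into $\fkm^{n_1}$. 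Thus $x_1$ is both the first term of a Goto sequence on $M$ and satisfies the type-I inequality $r_{d-0}(M/\q_0 M) = r_d(M)\le r_{d-1}(M/x_1M) = r_{d-1}(M/\q_1 M)$.

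Next, by Lemma~\ref{property}(4), $x_2,\ldots,x_s$ is to be a Goto sequence on $M/\q_1 M = M/x_1 M$, and by Lemma~\ref{2.7} the Goto number has dropped, $\rmg(M/x_1M)\le\rmg(M)$, so the power constraints remain controllable. Applying the inductive hypothesis to the module $M' = M/x_1 M$ (which again is a homomorphic image of a Cohen-Macaulay local ring, and to which the hypotheses transfer) with the $\fkm$-primary ideal $I$, we obtain a system $x_2,\ldots,x_s\in I$ that is a Goto sequence of type I on $M'$; that is, $r_{(d-1)-j'}(M'/\q'_{j'}M')\le r_{(d-1)-j'-1}(M'/\q'_{j'+1}M')$ for $0\le j'\le s-2$. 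Re-indexing via $j = j'+1$ and using $M'/\q'_{j'}M' = M/\q_j M$ (with $\dim M' = d-1$, so $(d-1)-j' = d-j$), these are exactly the type-I inequalities $r_{d-j}(M/\q_j M)\le r_{d-j-1}(M/\q_{j+1}M)$ for $1\le j\le s-1$. Together with the inequality at $j=0$ secured in the previous step, and together with the fact (from Corollary~\ref{exists001} and Lemma~\ref{property}(4)) that the concatenation $x_1,x_2,\ldots,x_s$ is a Goto sequence on $M$, this shows $\x$ is a Goto sequence of type I on $M$, completing the induction.

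The main obstacle I anticipate is the {\it compatibility of the two sets of constraints}: the type-I inequalities demand that $x_1$ be chosen in a (possibly large) power $\fkm^{n_1}$ dictated by Corollary~\ref{leCMt}, while the Goto-sequence conditions of Proposition~\ref{exists element} are proved for elements avoiding a finite set of primes but are not a priori compatible with lying deep in $\fkm$. Reconciling these requires the observation that the prime-avoidance construction is stable under replacing the chosen element by a power (a power of $y$ avoids exactly the same primes as $y$, and a suitable power of a parameter element is again a parameter element lying in any prescribed $\fkm^{n_1}$), so one genuinely can meet both requirements at once. A secondary technical point is checking that the inductive hypotheses — $R$ a homomorphic image of a Cohen-Macaulay local ring, $I$ still $\fkm$-primary in the quotient — are inherited by $M/x_1M$; this is routine, since quotienting by a parameter element preserves being a module over such a ring and $I$ remains $\fkm$-primary.
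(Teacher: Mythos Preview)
Your inductive scheme differs from the paper's in a way that creates a real gap. You recurse on the \emph{quotient} $M' = M/x_1M$: having chosen $x_1$, you apply the inductive hypothesis to $M'$ to obtain a Goto sequence of type I $x_2,\ldots,x_s$ on $M'$, and then assert that the concatenation $x_1,\ldots,x_s$ is a Goto sequence on $M$, citing Corollary~\ref{exists001} and Lemma~\ref{property}(4). But Lemma~\ref{property}(4) only gives the forward implication (a Goto sequence on $M$ restricts to one on $M/\q_iM$); you are using an unstated converse. The difficulty is that conditions (1) and (2) in the definition of a Goto sequence refer to the dimension filtration $\{D_i\}$ \emph{of $M$} and its subquotients $C_i$. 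The inductive hypothesis, applied black-box to $M'$, uses the dimension filtration of $M'$, which in general is not the image of $\{D_i\}$. Even when the images match, knowing that $x_j$ kills the image of $D_i$ in $M'$ only says $x_jD_i \subseteq x_1M$, not $x_jD_i = 0$; so condition (2) on $M$ does not follow. Likewise condition (1) for the original $C_i$ is not delivered by the Goto property on $M'$.

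The paper avoids this by running the induction on $M$ itself: assuming $x_1,\ldots,x_{j-1}$ is already a Goto sequence of type I on $M$, it chooses the next element $x_j$ by applying Proposition~\ref{exists element} to the filtration $\{C_i/\q_{j-1}C_i\}$ (the images of the \emph{original} $C_i$), intersected with $\Ann(D_i)\cap I\cap \m^n$ where $n$ comes from Corollary~\ref{leCMt} applied to $\overline{M}=M/\q_{j-1}M$. This simultaneously secures the Goto conditions (1)--(3) with respect to the fixed filtration of $M$ and the type I inequality $r_{d-j+1}(\overline{M}) \le r_{d-j}(\overline{M}/x_j\overline{M})$. The point is that Proposition~\ref{exists element} is flexible about which filtration it is fed, so one always works with the original $\{C_i\}$ rather than the dimension filtration of a quotient. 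If you want to salvage your recursion-on-the-quotient approach, you would need to strengthen the inductive statement to something like ``for any prescribed filtration with $\Ass L_i \subseteq \Assh L_i \cup\{\m\}$, there exists a sequence that is Goto relative to that filtration and satisfies the type I inequalities''; otherwise switch to the paper's left-to-right extension on $M$.

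A smaller point: your base case says the type I condition is ``vacuous'' for $s=1$, but $j=0$ gives the genuine requirement $r_d(M)\le r_{d-1}(M/x_1M)$, which already needs Corollary~\ref{leCMt} together with the power-taking trick you describe. (The paper is equally casual here; the honest base case is the empty sequence.)
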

\begin{proof}
We shall now show the our result by induction on $s$. In the case in which $s=1$ there is nothing to prove, because of the Lemma \ref{exists element}. So we suppose, inductively, that $s=j> 1$ and the results have both been proved for smaller values of $s$. Suppose that $d_{i}<j\le d_{i+1}$ for some $i$. By induction we have system $x_1,\ldots,x_{j-1}$ of $R$ such that satisfies the following conditions
\begin{enumerate}
\item[$(1)$] $\Ass(N_i/\q_{j-1} N_i)\subseteq \Assh(N_j/\q_{j-1}N_i)\cup \{\fkm\}$, where $\q_{j-1}=(x_1,\ldots,x_{j-1})$, for all $i=0,\ldots,\ell-1$. 
\item[$(2)$] The sequence $x_1,x_2,\ldots,x_{d_i}$ is a system of  parameters  of $D_i$.
\end{enumerate}
Let $\overline R=R/\q_{j-1}$ $\overline M=M/\q_{j-1}M$ $\fkn=\m/\fkq_{j-1}$. It follows from Corollary \ref{leCMt} that there exists an integer $n$ such that for all $x\in \m^n$ we have $r_{d-j+1}(\overline M)\le r_{d-j}(\overline M/x\overline M)$.
Put $J=(\Ann(D_i)+\q_{j-1})\cap I\cap \m^n$. Then $J\overline R$ is an $\fkn$-primary ideal of $\overline R$. 
 By Lemma \ref{exists element}, we can choose $x_{j+1}\in\Ann(D_i)\cap I\cap\m^n$, as required. With this observation, we can complete the inductive step and the proof.
\end{proof}

\section{Socle polynomial}
In this section, we introduce the notion of Noetherian coefficients and the its computation   in the sequentially Cohen-Macaulay cases.
 Recall, we say that an $R$-submodule $N$ of $M$ is irreducible if $N$ is not written as the intersection of two larger $R$-submodules of $M$. Every $R$-submodule $N$ of $M$ can be expressed as an irredundant intersection of irreducible $R$-submodules 
of $M$ and the number of irreducible $R$-submodules appearing in such an expression depends only on $N$ and not on the 
expression. Let us call, for each $\frak m$-primary ideal $I$ of $M$, the number $\mathcal N(I;M)$ of irreducible $R$-
submodules of $M$ that appearing in an irredundant irreducible decomposition of $I M$ the index of reducibility of $M$ with 
respect to $I$. Remember that $$\mathcal{N}(I;M)=\ell_R([I M :_M \fkm]/I M).$$ 
Moreover, by Proposition 2.1 \cite{CQT}, it is well known that there exists a polynomial $p_{I,M}(n)$ of degree $d-1$ with rational coefficients such that  $$\mathcal{N}(I^{n+1};M)=\ell_R([I^{n+1} M :_M \fkm]/I^{n+1} M)=p_{I,M}(n)$$  for all large enough $n$. Then, there are integers $f_i(I;M)$ such that
$$p_{I,M}(n)=\sum\limits_{i=0}^{d-1}(-1)^if_i(I;M)\binom{n+d-1-i}{d-1-i}.$$
These integers  $f_i(I;M)$   are called the Noetherian coefficients of $M$ with respect to $I$. 
In particular, the leading coefficient  $f_0(I; M)$ is  called the irreducible multiplicity of $M$ with respect to $I$. 
When $M=R$, we abbreviate $f_0(I;M)$ to $f_0(I)$. The following result will be necessary in the computation of the Noetherian coefficients of distinguished parameter ideals.

\medskip

\begin{lem}\label{2.101}
Let $N$ be a submodule of $M$ such that $M/N$ is Cohen-Macaulay and $\dim N<\dim M$. Assume that $\q$ is a parameter ideal generated by $x_1,\ldots,x_d$ such that
$$[N+\q M]:_M \fkm=N + [\q M:_M\fkm].$$
Let  $0\le s\le d$ and $\fkb=(x_1,\ldots,x_s)$. Then we have 
$$[\q^n M+N+\fkb]:_M \fkm=[\q^n M:_M\fkm]+N+\fkb,$$
for all $n\ge 0$.
\end{lem}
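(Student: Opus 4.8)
The plan is to induct on $s$. For $s=0$ the statement is precisely the hypothesis $[N+\q M]:_M\fkm = N + [\q M :_M \fkm]$ applied with $\q^n$ in place of $\q$; but actually I first need to observe that the hypothesis, phrased for $\q$, propagates to every power $\q^n$. This is where I would invoke Lemma~\ref{2.101}'s own statement in the base case $s=0$ — no wait, that is circular — so instead let me treat $s=0$ separately: here one must show $[\q^n M + N]:_M \fkm = [\q^n M :_M \fkm] + N$ for all $n\ge 0$, and the engine for this is that $M/N$ is Cohen--Macaulay with $\x$ a system of parameters on it, so $\x^n$ is again a regular sequence on $M/N$, and one transfers the colon computation through the short exact sequence $0\to N \to M \to M/N \to 0$ using that $N$ has smaller dimension (so $\fkm^k N \subseteq \q^n M \cap N$ eventually is not what I want — rather, one uses that $\q^n M \cap N$ is controlled). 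The cleanest route: reduce modulo $N$ is not legitimate because $N \not\subseteq \q^n M$; instead note $[\q^n M + N]:_M\fkm \supseteq [\q^n M:_M\fkm]+N$ trivially, and for the reverse inclusion take $y$ in the left side, push to $M/N$ where $\bar y \in [\q^n (M/N) :_{M/N}\fkm]$, and use Cohen--Macaulayness of $M/N$ together with the case $n$ of the hypothesis-type identity to lift.

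\textbf{Inductive step.} Assume the identity for $\fkb_{s-1}=(x_1,\dots,x_{s-1})$ and all $n\ge 0$; prove it for $\fkb_s = \fkb_{s-1}+(x_s)$. Write $M' = M/(N+\fkb_{s-1} + \q^n M$-part$)$ — more precisely, set $P = \q^n M + N + \fkb_{s-1}$, so the inductive hypothesis says $P :_M \fkm = [\q^n M :_M \fkm] + N + \fkb_{s-1}$. I must show $[P + (x_s)]:_M \fkm = [\q^n M :_M \fkm] + N + \fkb_{s-1} + (x_s)$, equivalently $[P + x_s M \cap \text{(stuff)}]$... this needs care because $x_s$ appears in $\fkb$ as a generator of the submodule $(x_1,\dots,x_s)M$, i.e. $\fkb = (x_1,\dots,x_s)M$, so $P + (x_s) = \q^n M + N + (x_1,\dots,x_s)M$. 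The standard device is: given $y$ with $y\fkm \subseteq P + x_s M$, write $y\fkm \subseteq P + x_s M$ and study the element $x_s$ acting on $M/P$. Since $\x$ is a $d$-sequence type hypothesis is not given here, but $M/N$ is Cohen--Macaulay and $x_1,\dots,x_d$ is an s.o.p. on it, so modulo $N$ the images $\bar x_1, \dots, \bar x_d$ form a regular sequence, hence $x_s$ is a nonzerodivisor on $M/(N + \fkb_{s-1} + \q^\ell M)$ for suitable configurations. The key algebraic lemma I would isolate: if $x$ is a nonzerodivisor on $M/P$ and $[P:_M\fkm]$ is already known, then $[(P + xM):_M\fkm] = [P:_M\fkm] + xM$, provided $x\fkm \subseteq P$ forces... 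Actually the right statement uses that colon commutes suitably: $y\fkm\subseteq P+xM \Rightarrow y\fkm \subseteq P + xM$, lift to get $y = p' + xm$ issues. I would use the exact sequence $0 \to (M/P)\xrightarrow{x} M/P \to M/(P+xM)\to 0$ combined with $\Hom(R/\fkm, -)$ to get $[(P+xM):_M\fkm]/(P+xM) \hookrightarrow \Ext^1(R/\fkm, M/P)$-type control, but more elementarily: $[(P+xM):_M\fkm] = x[\text{something}] + [P:_M\fkm]$ when $x$ is $M/P$-regular follows by a direct diagram chase.

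\textbf{Main obstacle.} The delicate point is that $x_s$ need not be a nonzerodivisor on $M/(\q^n M + N + \fkb_{s-1})$ on the nose — the powers $\q^n$ interact with $\fkb_{s-1} = (x_1,\dots,x_{s-1})M$ and with $N$. The resolution should be that, working modulo $N$ where $M/N$ is Cohen--Macaulay, the relevant module $\overline{M}/(\overline{\q^n M} + \overline{\fkb_{s-1}})$ has $\bar x_s$ as a nonzerodivisor because $\bar x_1,\dots,\bar x_d$ is a regular sequence on $\overline{M}=M/N$ and $\q^n + \fkb_{s-1}$ lies in the ideal it generates in a way that preserves regularity of the next element $\bar x_s$ (one checks $(\q^n M + \fkb_{s-1})\overline{M}$ is contained in an ideal where $\bar x_s$ stays regular, e.g. using associated primes: $\Ass(\overline M/(\q^n M+\fkb_{s-1})\overline M)$ avoids any prime containing $x_s$ by the Cohen--Macaulay/unmixedness of quotients of $\overline M$ by parameters). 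Then I transfer back across $0\to N\to M\to M/N\to 0$ using $\dim N < \dim M$ and the inductive hypothesis, together with the observation that $N + [\text{anything}]:_M\fkm$ behaves well because $N$ is killed by high powers of the parameters while the socle lives in low degree — so the intersection $N \cap [\q^n M + \fkb_s]$ is handled by the hypothesis identity at the relevant stage. I expect the bookkeeping of which identity (which $n$, which $s$) feeds which step to be the only real source of friction; the underlying mechanism is just repeated use of "nonzerodivisor $x$ on $M/P$ $\Rightarrow$ $(P+xM):\fkm = (P:\fkm) + xM$" plus the smaller-dimension trick for $N$.
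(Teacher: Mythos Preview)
Your inductive plan on $s$ never locates the engine that actually drives this lemma, and the ``key algebraic lemma'' you isolate is false in general. If $x$ is merely a nonzerodivisor on $M/P$, the equality $(P+xM):_M\fkm = (P:_M\fkm) + xM$ can fail: applying $\Hom_R(R/\fkm,-)$ to $0\to M/P \xrightarrow{x} M/P \to M/(P+xM)\to 0$ gives an exact sequence whose connecting map into $\Ext^1_R(R/\fkm,M/P)$ need not vanish, so the socle of $M/(P+xM)$ is typically strictly larger than the image of the socle of $M/P$. Your attempt to patch this by arranging $\bar x_s$ regular on $(M/N)/(\q^nM+\fkb_{s-1})(M/N)$ does not help, because that regularity alone still only gives the exact sequence above, not the desired socle equality. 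Even your base case $s=0$ is a real problem: the hypothesis is stated only for $\q$, not for $\q^n$, so you cannot simply ``push to $M/N$ and lift'' without an extra mechanism that promotes the $n=1$ identity to all $n$.

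The paper's proof bypasses induction on $s$ entirely. It passes at once to $\calM = M/(N+\fkb M)$, which is Cohen--Macaulay because $M/N$ is and $x_1,\dots,x_s$ is part of a regular sequence there. The decisive fact is then purely about Cohen--Macaulay modules and parameter ideals: since $\gr_\q(\calM)\cong (\calM/\q\calM)[T_{s+1},\dots,T_d]$, one has $\q^{n+1}\calM :_\calM \q = \q^n\calM$, whence $\q^{n+1}\calM:_\calM\fkm \subseteq \q^n\calM$ and therefore $\q^{n+1}\calM:_\calM\fkm = \q^n(\q\calM:_\calM\fkm)$. Lifting to $M$ this reads $[\q^{n+1}M+N+\fkb M]:_M\fkm = \q^n([\q M+N]:_M\fkm)+N+\fkb M$ (using $\fkb\subseteq\q$), and now the single hypothesis $[\q M+N]:_M\fkm = [\q M:_M\fkm]+N$ finishes the job, because $\q^n[\q M:_M\fkm]\subseteq [\q^{n+1}M:_M\fkm]$. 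So the missing idea in your plan is exactly this Cohen--Macaulay colon identity $\q^{n+1}\calM:\q=\q^n\calM$; once you have it, both the passage from $n=1$ to all $n$ and the incorporation of $\fkb$ happen in one stroke, with no induction on $s$ required.
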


\begin{proof}
We put $\calM= M/\fkb M+N$ and we denote $\operatorname{gr}_{\q}(\calM)=\bigoplus\limits_{n\ge 0}\frak q^n\calM/\frak q^{n+1}\calM$. Since $\calM$
is a Cohen-Macaulay $R$-module and $\q$ is a parameter ideal of $R$-module $\calM$, sequence $x_{s+1},\ldots,x_d$ is an $\calM$-regular. Since $\calM$ is Cohen-Macaulay, we have a natural isomorphism of graded modules

$$gr_\q(\calM)=\bigoplus\limits_{n\ge 0}\frak q^n\calM/\frak q^{n+1}\calM\to \calM/\frak q \calM[T_{s+1},\ldots,T_d],$$ where $T_{s+1},\ldots,T_d$ are indeterminates.
This deduces $R$-isomomorphisms on graded parts
$$\frak q^n\calM/\frak q^{n+1}\calM\to\big( \calM/\frak q \calM[T_{s+1},\ldots,T_d]\big)_n\cong \calM/\frak q \calM^{\binom{n+d-(s+1)}{d-(s+1)}}$$ for all $n\geq 0$. On the other hand, since $\frak q$ is a parameter ideal of a Cohen-Macaulay modules $\calM$, $\frak q^{n+1}\calM:\frak m\subseteq \frak q^{n+1}\calM:\frak q=\frak q^{n}\calM$. It follows that
$\frak
q^{n+1} \calM:\frak m=\frak q^{n}\calM(\frak q \calM:_\calM\frak m)$. So we have
$$[\frak q^{n+1}M+N+\fkb M]:\frak m = \frak q^n([\frak qM+N+\fkb]:\frak m)+N+\fkb$$
because $\fkb\subseteq\fkq$.
 Since $[N+\q M]:_M \fkm=N + [\q M:_M\fkm]$, therefore we have
$$[\frak q^{n+1}M+N+\fkb M]:\frak m \subseteq \frak q^n(\frak q M:\frak
m)+N+\fkb M\subseteq \frak q^{n+1}M:\frak m+N+\fkb M.$$ Thus
$[\frak q^{n+1}M+N+\fkb M]:\frak m = \frak q^{n+1}M:\frak
m+N+\fkb M$. Hence
$$[\q^n M+N+\fkb M]:_M \fkm=[\q^n M:_M\fkm]+N+\fkb M,$$
for all $n\ge 0$.

\end{proof}

The notion of a sequentially Cohen-Macaulay module was introduced firstly by Stanley \cite{St} for the graded case and in \cite{Sch} for the local case.

\begin{dfn}[\cite{Sch, St}]\rm
 Let $\calD = \{D_i\}_{0 \le i \le \ell}$ be the dimension filtration of $M$.
We say that $M$ is a {\it sequentially Cohen-Macaulay $R$-module}, 
if $C_i$ is a Cohen-Macaulay $R$-module for all 
$1\leq i \leq \ell$, where $C_i=D_i/D_{i-1}$. We say that $R$ is a {\it sequentially Cohen-Macaulay ring}, if $\dim R < \infty$ and $R$ is a sequentially Cohen-Macaulay module over itself. 

\end{dfn}

We maintain the following settings.

\begin{setting}\label{2.6}{\rm
Let $M$ be a sequentially Cohen-Macaulay $R$-module, $d = \dim M \ge 1$, and $\calD = \{D_i\}_{0 \le i \le \ell}$ the dimension filtration. We put  $N= D_{\ell -1}$, $L= M/D_{\ell -1}$ and  choose  a distinguished system $x_1, x_2, \ldots, x_d$ of parameters of $M$ such that
$$\calN(\fkq;M)=\sum\limits_{j \in \Bbb Z}r_j(M).$$
where  $\q = (x_1, x_2, \ldots, x_d)$.}
\end{setting}

\begin{fact}\rm(See \cite
{CGT1}) \label{F2.5} 
The following assertions hold true.
\begin{enumerate} 
\item[$({\rm 1})$] Module $N$ is sequentially Cohen-Macaulay and $L$ is Cohen-Macaulay.

\item[$({\rm 2})$]  We have $[N+\q M]:_M \fkm=N + [\q M:_M\fkm]$. 
\item[$({\rm 3})$] The parameter ideal $\q$ is also a distinguished parameter ideal of $N$ such that
$$\calN(\fkq;N)=\sum\limits_{j \in \Bbb Z}r_j(N).$$

\item[$({\rm 4})$] Let $M=R$. If $e_0(\m;R)>1$ or $\q\subseteq \m^2$  then we have $I^2=\q I$, where $I=\q:\m$.
\end{enumerate}

\end{fact}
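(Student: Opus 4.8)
The plan is to treat the four assertions largely as bookkeeping consequences of the machinery already in place: the structure of the dimension filtration (Proposition \ref{d1}), the existence and stability of distinguished systems of parameters, and Lemma \ref{2.101}. For (1), I would invoke the standard fact (noted after Proposition \ref{d1}) that $\{D_j\}_{0\le j\le \ell-1}$ is the dimension filtration of $N=D_{\ell-1}$ and $\{D_j/D_{\ell-1}\}$ is the dimension filtration of $M/D_{\ell-1}$; since the successive quotients $C_i=D_i/D_{i-1}$ are unchanged by passing to $N$, the module $N$ inherits sequential Cohen-Macaulayness, and $L=M/D_{\ell-1}=C_\ell$ is Cohen-Macaulay by the very definition of $M$ being sequentially Cohen-Macaulay (with $\dim_RL=d_\ell=d$, and $\dim R/\fkp=d$ for all $\fkp\in\Ass_RL$ by Proposition \ref{d1}(2)).

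For (2), the key point is that $x_1,\dots,x_d$ is a \emph{distinguished} system of parameters, so $(x_j\mid d_{\ell-1}<j\le d)D_{\ell-1}=(0)$; choosing the system so that $\q M:_M\fkm$ is as large as possible (which is what Setting \ref{2.6} arranges via $\calN(\fkq;M)=\sum_j r_j(M)$), one gets that $\q$ behaves on $L=M/N$ like a parameter ideal on a Cohen-Macaulay module, and the containment $[N+\q M]:_M\fkm\subseteq N+[\q M:_M\fkm]$ follows from the fact that $L$ is Cohen-Macaulay and $\q$ acts there as a regular sequence (so $\q L:_L\fkm$ lifts back modulo $N$), the reverse containment being trivial. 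I would cite \cite{CGT1} for the precise statement but reproduce this one-line argument. Assertion (3) is then immediate: $\q$ restricted to $N$ is still generated by a distinguished system of parameters of $N$ (same $x_j$'s, same vanishing conditions relative to the filtration of $N$, which is a sub-filtration), and the optimality $\calN(\fkq;N)=\sum_j r_j(N)$ is inherited because the socle of $N$ modulo $\q N$ decomposes compatibly with the filtration — here one uses (2) to split off the top piece and induct.

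For (4) with $M=R$, the claim $I^2=\q I$ for $I=\q:\fkm$ under the hypothesis $e_0(\fkm;R)>1$ or $\q\subseteq\fkm^2$ is the purely local statement that reduction number one holds for socle ideals of parameter ideals; the standard argument is: $I^2\subseteq \q:\fkm$ forces $I^2\subseteq (\q:\fkm)$, and using $\fkm I\subseteq\q$ together with $I\fkm\subseteq\q$ one shows $I^2\subseteq\q I+\q^2=\q I$ after checking $I^2\subseteq\q$, which holds precisely because in the excluded case $e_0(\fkm)=1$ with $\q\not\subseteq\fkm^2$ one could have $I=R$. I expect the main obstacle to be assertion (3) — transporting the \emph{optimal} choice of distinguished parameters from $M$ to $N$ while keeping $\calN(\fkq;N)=\sum_j r_j(N)$, rather than merely $\le$; this requires knowing that the socle length is additive along the short exact sequence $0\to N/\q N\to M/\q M\to L/\q L\to 0$ after applying $\Hom(k,-)$, which in turn relies on (2) making that sequence behave well on socles. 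Everything else is, as indicated, a matter of unwinding definitions and citing \cite{CGT1, Sch}.
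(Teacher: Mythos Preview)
The paper does not supply its own proof of Fact~\ref{F2.5}; it records the four assertions as facts imported from \cite{CGT1}, and nothing beyond the citation is offered. So there is no ``paper's proof'' to compare against, only the question of whether your sketch stands on its own.

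Your treatment of (1) is fine: it is indeed immediate from the definition of sequential Cohen--Macaulayness and the remark after Proposition~\ref{d1} that the dimension filtration of $N=D_{\ell-1}$ is the truncation $\{D_j\}_{0\le j\le \ell-1}$. For (2) and (3) your outline is in the right spirit --- the crucial point is that the hypothesis $\calN(\fkq;M)=\sum_j r_j(M)$ forces the map $\Hom_R(k,M/\fkq M)\to\Hom_R(k,L/\fkq L)$ to be surjective, which is equivalent to the equality in (2), and then (3) follows by subtracting off the top piece --- but you should be aware that this surjectivity is the substantive content, not a ``one-line argument''; it is what \cite{CGT1} actually proves.

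There is, however, a genuine gap in your plan for (4). You write that ``using $\fkm I\subseteq\fkq$\dots one shows $I^2\subseteq\fkq I+\fkq^2=\fkq I$ after checking $I^2\subseteq\fkq$''. The containment $I^2\subseteq\fkq$ is indeed easy (since $I\subseteq\fkm$ under the stated hypotheses, so $I^2\subseteq\fkm I\subseteq\fkq$), but $I^2\subseteq\fkq$ does \emph{not} by itself yield $I^2\subseteq\fkq I$. Your chain of inclusions never explains why an arbitrary element of $I^2$ lands in $\fkq I$ rather than merely in $\fkq$. The equality $I^2=\fkq I$ for distinguished parameter ideals in sequentially Cohen--Macaulay rings is the \emph{main theorem} of \cite{CGT1}, and its proof proceeds by an induction along the dimension filtration, using (2) at each stage to control how the socle ideal interacts with the successive Cohen--Macaulay quotients $C_i$; it is not a bookkeeping consequence of $\fkm I\subseteq\fkq$. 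You should either reproduce that inductive argument or, as the present paper does, simply cite \cite{CGT1} for assertion (4).
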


\begin{proposition}\label{P2.7}
We have
$$\mathcal{N}(\frak q^{n+1};M) = \sum\limits_{i=1}^dr_i(M)\binom{n+i-1}{i-1}+r_0(M)$$
for all $n\ge 1$.
\end{proposition}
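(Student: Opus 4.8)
The plan is to induct on the length $\ell$ of the dimension filtration of $M$, reducing the general sequentially Cohen-Macaulay case to the Cohen-Macaulay case via the short exact sequence $0\to N\to M\to L\to 0$ with $N=D_{\ell-1}$ and $L=M/D_{\ell-1}$. When $\ell=1$, $M$ itself is Cohen-Macaulay, $\q$ is $M$-regular, and one has the standard identification $\gr_\q(M)\cong (M/\q M)[T_1,\ldots,T_d]$; combined with $\q^{n+1}M:_M\fkm = \q^n(\q M:_M\fkm)$ (valid since $\q$ is a parameter ideal on a Cohen-Macaulay module, exactly as in the proof of Lemma~\ref{2.101}) one gets $\calN(\q^{n+1};M)=\calN(\q;M)\binom{n+d-1}{d-1}=r_d(M)\binom{n+d-1}{d-1}$, which matches the claimed formula since for a Cohen-Macaulay module $r_j(M)=0$ for $j\ne d$ and $\calN(\q;M)=r_d(M)$ by the choice in Settings~\ref{2.6}.

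For the inductive step, I would invoke Fact~\ref{F2.5}: $N$ is sequentially Cohen-Macaulay with dimension filtration of length $\ell-1$, $L$ is Cohen-Macaulay of dimension $d$, $\q$ is a distinguished parameter ideal of $N$ realizing $\calN(\q;N)=\sum_j r_j(N)$, and crucially $[N+\q M]:_M\fkm = N+[\q M:_M\fkm]$. Applying Lemma~\ref{2.101} with this $N$ (and $s=0$, $\fkb=0$) yields
$$[\q^{n+1}M+N]:_M\fkm = [\q^{n+1}M:_M\fkm]+N$$
for all $n\ge 0$. Then from $0\to N\to M\to L\to 0$ one reads off, for the socle functor, an exact sequence relating $[\q^{n+1}M:_M\fkm]/\q^{n+1}M$, $[\q^{n+1}N:_N\fkm]/\q^{n+1}N$, and $[\q^{n+1}L:_L\fkm]/\q^{n+1}L$; the displayed equality is precisely what is needed to show the connecting map is well-behaved, i.e. that $\calN(\q^{n+1};M)=\calN(\q^{n+1};N)+\calN(\q^{n+1};L)$ for $n\ge 1$ (one must check $\q^{n+1}M\cap N=\q^{n+1}N$, which follows from $\q$ being a $d$-sequence / superficial-type behavior on the filtration). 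Granting this additivity, apply the induction hypothesis to $N$ and the $\ell=1$ case to $L$, and use that $r_j(M)=r_j(N)$ for $j<d$ while $r_d(M)=r_d(L)$ (from the local cohomology long exact sequence of $0\to N\to M\to L\to 0$, since $\dim N<d$ kills $\H^d_\fkm(N)$ and $\H^{d-1}_\fkm(L)=0$ as $L$ is Cohen-Macaulay of dimension $d\ge 1$). Summing the two polynomial expressions gives exactly $\sum_{i=1}^d r_i(M)\binom{n+i-1}{i-1}+r_0(M)$.

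The main obstacle is the additivity step $\calN(\q^{n+1};M)=\calN(\q^{n+1};N)+\calN(\q^{n+1};L)$: it requires both the socle-splitting coming from $[\q^{n+1}M+N]:_M\fkm=[\q^{n+1}M:_M\fkm]+N$ and the intersection identity $\q^{n+1}M\cap N=\q^{n+1}N$ (equivalently, that $N\hookrightarrow M$ stays injective after tensoring the relevant quotients), and one has to be careful that this holds for all $n\ge 1$ rather than just $n\gg 0$ — this is where the distinguished/Goto-sequence hypotheses on $\q$ and the Cohen-Macaulayness of $L$ and of each $C_i$ are genuinely used, via Fact~\ref{F2.5} and Lemma~\ref{2.101}. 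Everything else is either the Cohen-Macaulay base case or formal bookkeeping with the local cohomology of the extension $0\to N\to M\to L\to 0$.
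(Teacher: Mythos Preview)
Your proposal is correct and follows essentially the same approach as the paper: induct on $\ell$, treat the Cohen-Macaulay case via $\gr_\q(L)\cong (L/\q L)[T_1,\ldots,T_d]$, and in the inductive step use Fact~\ref{F2.5}(2) together with Lemma~\ref{2.101} to obtain the socle additivity $\calN(\q^{n+1};M)=\calN(\q^{n+1};N)+\calN(\q^{n+1};L)$. The intersection identity $\q^{n+1}M\cap N=\q^{n+1}N$ that you flag is exactly what the paper uses (phrased as exactness of $0\to N/\q^{n+1}N\to M/\q^{n+1}M\to L/\q^{n+1}L\to 0$, which holds because $L$ is Cohen-Macaulay and $\q$ is generated by an $L$-regular sequence), and your explicit local-cohomology bookkeeping $r_j(M)=r_j(N)$ for $j<d$, $r_d(M)=r_d(L)$ spells out what the paper leaves as ``a simple inductive argument.''
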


\begin{proof}

We denote $\operatorname{gr}_{\q}(L)=\bigoplus\limits_{n\ge 0}\frak q^nL/\frak q^{n+1}L$. Since $L$
is a Cohen-Macaulay $R$-module and $\q$ is a parameter ideal of $R$-module $L$, sequence $x_1,\ldots,x_d$ is an $L$-regular. Since $L$ is Cohen-Macaulay, we have a natural isomorphism of graded modules

$$gr_\q(L)=\bigoplus\limits_{n\ge 0}\frak q^nL/\frak q^{n+1}L\to L/\frak q L[T_1,\ldots,T_d],$$ where $T_1,\ldots,T_d$ are indeterminates.
This deduces $R$-isomomorphisms on graded parts
$$\frak q^nL/\frak q^{n+1}L\to\big(L/\frak q L[T_1,\ldots,T_d]\big)_n\cong L/\frak q L^{\binom{n+d-1}{d-1}}$$ for all $n\geq 0$. On the other hand, since $\frak q$ is a parameter ideal of a Cohen-Macaulay modules $L$, $\frak q^{n+1}L:\frak m\subseteq \frak q^{n+1}L:\frak q=\frak q^{n}L$. It follows from $\ell(\q L:\m/\q L)=r_d(M)$ that
$$\ell(\frac{\q^{n+1}L:\m}{\q^{n+1}L})=\ell(\frac{\q L:\m}{\q L})\binom{n+d-1}{d-1}=r_{d}(M)\binom{n+d-1}{d-1}.$$
Since the parameter ideal $\frak q$ is good and $L$ is Cohen-Macaulay, the
following exact sequence
$$0\to N\to M\to L\to 0$$
induces the following exact sequence
$$0\to N/\frak q^{n+1}N\to M/\frak q^{n+1}M\to L/\frak q^{n+1} L\to 0.$$
It follows from  $[\frak
q^{n+1}M+N]:\frak m= \frak q^{n+1}M:\frak m+N$, by the Fact \ref{F2.5} and lemma \ref{2.101}, that by
applying $\Hom_R(k,*)$, we obtain the following exact sequence
$$0\to \Hom_R(k,N/\frak q^{n+1} N)\to \Hom_R(k,M/\frak q^{n+1}M)\to \Hom_R(k,L/\frak q^{n+1} L)\to0$$
Therefore we get that
$$\ell_R([\frak q^{n+1}M:\frak m]/\frak q^{n+1}M)=\ell_R([\frak q^{n+1} N:\frak m]/\frak q^{n+1} N)+\ell_R([\frak q^{n+1} L:\frak m]/\frak q^{n+1} L).$$
A simple inductive argument therefore shows that
$$\mathcal{N}(\frak q^{n+1};M) = \ell ([\frak q^nM : \frak m]/\frak q^nM) =
 \sum\limits_{i=1}^dr_i(M)\binom{n+i-1}{i-1}+r_0(M)$$ for all $n\ge 1$. Thus the proof
is complete

\end{proof}

We need the following result in next section.

\begin{lem}\label{3.700}
We have
$\q^{n+1}M:\m=\q^n(\q M:\m)$ for all $n\ge 0$.
\end{lem}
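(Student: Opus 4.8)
The plan is to prove the identity $\q^{n+1}M:\m=\q^n(\q M:\m)$ by induction on $n$, the base case $n=0$ being trivial. Throughout I work in Setting~\ref{2.6}, so $M$ is sequentially Cohen-Macaulay, $N=D_{\ell-1}$, $L=M/N$ is Cohen-Macaulay, and $\q=(x_1,\ldots,x_d)$ is the chosen distinguished parameter ideal. The inclusion $\q^n(\q M:\m)\subseteq \q^{n+1}M:\m$ is immediate, since $\m\cdot\q^n(\q M:\m)=\q^n(\m\cdot(\q M:\m))\subseteq \q^n\cdot \q M=\q^{n+1}M$. So the real content is the reverse inclusion.

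\textbf{Reducing to the Cohen-Macaulay quotient.}
First I would invoke Fact~\ref{F2.5}(2), which gives $[N+\q M]:_M\fkm=N+[\q M:_M\fkm]$; this is exactly the hypothesis needed to apply Lemma~\ref{2.101} with $\fkb=(0)$ (i.e. $s=0$), yielding
$$[\q^{n+1}M+N]:_M\fkm=[\q^{n+1}M:_M\fkm]+N$$
for all $n\ge0$. Next, since $L$ is Cohen-Macaulay and $\q$ is a parameter ideal of $L$, the sequence $x_1,\ldots,x_d$ is $L$-regular, so as in the proof of Proposition~\ref{P2.7} one has $\q^{n+1}L:\m\subseteq \q^{n+1}L:\q=\q^nL$, and hence $\q^{n+1}L:\m=\q^n(\q L:\m)$ directly. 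The strategy is then to lift this Cohen-Macaulay identity back to $M$ using the short exact sequence $0\to N\to M\to L\to 0$ together with the displayed socle formula above, exactly the mechanism already used in Proposition~\ref{P2.7}.

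\textbf{Running the induction.}
Assume the identity holds for $n-1$, i.e. $\q^nM:\m=\q^{n-1}(\q M:\m)$. Let $z\in \q^{n+1}M:\m$. Its image $\bar z\in L$ lies in $\q^{n+1}L:\m=\q^n(\q L:\m)$; since $\q M:\m$ surjects onto $\q L:\m$ (by Lemma~\ref{2.101} with $n=1$, which gives $[\q M+N]:\m=[\q M:\m]+N$, so every element of $\q L:\m$ lifts to $\q M:\m$ modulo $N$), we may choose $w\in \q^n(\q M:\m)$ with $z-w\in N$. Then $z-w\in N\cap(\q^{n+1}M:\m)$, and because $z-w\in N$ one has $\m(z-w)\subseteq N\cap \q^{n+1}M$. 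I would argue, using that $\q$ is distinguished (so $\q$ acts on $N=D_{\ell-1}$ compatibly with its dimension filtration) together with the fact that $N$ is again sequentially Cohen-Macaulay with a distinguished parameter ideal $\q$ (Fact~\ref{F2.5}(1),(3)), that $N\cap \q^{n+1}M=\q^{n+1}N$ — this is the standard Artin–Rees-type consequence of goodness/distinguishedness of $\q$ — so $z-w\in \q^{n+1}N:_N\m$. Now apply the inductive hypothesis to the module $N$ (which is legitimate since $N$ with $\q$ satisfies Setting~\ref{2.6} by Fact~\ref{F2.5}): $\q^{n+1}N:\m=\q^n(\q N:\m)\subseteq \q^n(\q M:\m)$. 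Hence $z=w+(z-w)\in \q^n(\q M:\m)$, completing the reverse inclusion.

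\textbf{Main obstacle.}
The delicate point is the identification $N\cap \q^{n+1}M=\q^{n+1}N$, i.e. that the chosen distinguished parameter ideal behaves well with respect to the submodule $N=D_{\ell-1}$. I expect this to follow from the $d$-sequence property of a Goto sequence (Lemma~\ref{property}(2)) and the defining annihilation condition of a distinguished system of parameters, but it is the step that genuinely uses the sequentially Cohen-Macaulay hypothesis rather than being formal; everything else is a routine transfer along the exact sequence $0\to N\to M\to L\to0$ via the socle formula from Lemma~\ref{2.101}, mirroring the argument in Proposition~\ref{P2.7}.
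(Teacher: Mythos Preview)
Your argument is essentially the paper's: pass to $L=M/N$, use the Cohen--Macaulay identity there, lift via Lemma~\ref{2.101}, and reduce the leftover piece in $N$ to the same statement for the smaller module. Two corrections are worth making.

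First, the induction variable is wrong. You announce induction on $n$ and state the hypothesis $\q^nM:\m=\q^{n-1}(\q M:\m)$, but you never use it; what you actually invoke at the end is $\q^{n+1}N:\m=\q^n(\q N:\m)$, which is the lemma for $N$ at the \emph{same} exponent. That is induction on $\ell$ (the length of the dimension filtration), not on $n$, and this is exactly how the paper organizes it: the base case is $\ell=1$ (i.e.\ $M$ Cohen--Macaulay), and the inductive step applies the result to $N=D_{\ell-1}$, which is again in Setting~\ref{2.6} by Fact~\ref{F2.5}(1),(3). As written your inductive hypothesis is a dangling assumption.

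Second, the ``main obstacle'' $N\cap\q^{n+1}M=\q^{n+1}N$ is simpler than you suggest. It does not require the $d$-sequence or distinguishedness machinery; it follows directly from $L=M/N$ being Cohen--Macaulay with $\q$ a parameter ideal, so that $x_1,\ldots,x_d$ is $L$-regular and hence $N/\q^{n+1}N\to M/\q^{n+1}M$ is injective for every $n$. The paper states this in one line.
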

\begin{proof}
Since $L$ is Cohen-Macaulay and $\q$ is a parameter ideal of $L$, we have $\q^nM\cap N=\q^nN$ and
$$\q^{n+1}M:\m\subseteq [\q^{n+1}M+N]:\m=\q^n(\q M:\m)+N,$$
for all $n\ge 0$, because of the Fact \ref{F2.5} and the Lemma \ref{2.101}.
Let $a\in\q^{n+1}M:\m$ and we write $a=b+c$ for $b\in\q^n (\q M:\m)$ and $c\in N$. Then $\m c=\m (a-b)\in \q^{n+1}M\cap N=\q^{n+1}N$.  Thus $c\in \q^{n+1}N:\m$. 
Therefore $\q^{n+1}M:\m\subseteq \q^n(\q M:\m)+\q^{n+1}N:\m$. Hence
\begin{eqnarray*}
\q^{n+1}M:\m
&=&\q^n(\q M:\m)+\q^{n+1} N:\m
\end{eqnarray*}
Since $N$ is sequentially Cohen-Macaulay and $\q$ is a good parameter idea of $N$, by the induction on $\ell$, we have
$\q^{n+1} N:\m=\q^n[\q N:\m]$. Therefore we have $$\q^{n+1}M:\m=\q^n(\q M:\m),$$
as required.

\end{proof}

We close this section with the following, which  is the main result of \cite{T} of the first author.

\begin{theorem}[{\cite[Theorem 1.1]{T}}]\label{3.800}
There exists an integer $n \gg 0$ such that for every distinguished parameter ideals $\q$ of $M$ contained in $\m^{n}$, one has  the equality
$$\calN(\q; M)=\sum\limits_{j \in \Bbb Z}\ell_R((0):_{\H^j_\fkm(M)}\fkm).$$
\end{theorem}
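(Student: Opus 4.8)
The plan is to argue by induction on $d=\dim_RM$, peeling off the leading element of the given distinguished system of parameters.

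For $d=0$ there is nothing to do: the only parameter ideal is $\q=(0)$ and $\calN((0);M)=\ell_R((0):_M\m)=\ell_R((0):_{\H^0_\m(M)}\m)$. So assume $d\ge 1$ and let $\x=x_1,\ldots,x_d$ be a distinguished system of parameters of $M$ with $\q=(\x)\subseteq\m^n$, the integer $n$ to be chosen large. First I would record two facts about $x_1$. Since $x_j D_i=0$ for $d_i<j\le d$, the ideal $(x_1,\ldots,x_{d_i})+\Ann_RD_i$ equals $\q+\Ann_RD_i$, which is $\m$-primary; hence $x_1,\ldots,x_{d_i}$ is a system of parameters of $D_i$ for every $i$ (as already used in the proof of Corollary \ref{exists001}). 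Consequently $x_1$ avoids every associated prime of $D_i$ of maximal coheight, and as these exhaust $\Ass_RM\setminus\{\m\}$ we get $x_1\notin\fkp$ for all $\fkp\in\Ass_RM\setminus\{\m\}$, so $(0):_Mx_1\subseteq\H^0_\m(M)$; taking $n$ at least an exponent annihilating the finite length module $\H^0_\m(M)$, the reverse inclusion also holds and $(0):_Mx_1=\H^0_\m(M)$. In particular $x_1$ is filter-regular and $x_1\H^0_\m(M)=0$, so $x_2,\ldots,x_d$ is a distinguished system of parameters of $M':=M/x_1M$, it lies in $\m^n$, and $\dim_RM'=d-1$. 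Because $\q M=x_1M+\q'M$ with $\q'=(x_2,\ldots,x_d)$, we have $M/\q M=M'/\q'M'$, hence $\calN(\q;M)=\calN(\q';M')$, and the induction hypothesis (applicable once $n$ dominates the threshold for $M'$; see below) gives $\calN(\q';M')=\sum_{j}r_j(M')$, where $r_j(-)=\ell_R((0):_{\H^j_\m(-)}\m)$.

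Thus everything reduces to the identity $\sum_{j}r_j(M')=\sum_{j}r_j(M)$. I would pass to $P:=M/\H^0_\m(M)$: since $x_1\H^0_\m(M)=0$ and $x_1$ is $P$-regular, the short exact sequences $0\to\H^0_\m(M)\to M/x_1M\to P/x_1P\to 0$ and $0\to\Coker(x_1\colon\H^j_\m(P)\to\H^j_\m(P))\to\H^j_\m(P/x_1P)\to\Ker(x_1\colon\H^{j+1}_\m(P)\to\H^{j+1}_\m(P))\to 0$ reduce it (after bookkeeping the term $r_0(M)=\ell_R(\Soc\H^0_\m(M))$) to the inequality $\sum_jr_j(P/x_1P)\le\sum_jr_j(P)$. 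The opposite inequality is Lemma \ref{2.7} (and Corollary \ref{leCMt} for the passage through depth $0$), which gives $r_j(P)\le r_{j-1}(P/x_1P)$ for every $j$ and hence $\sum_jr_j(M)\le\sum_jr_j(M')$. For the stated direction, note that $\Soc\Ker(x_1\colon\H^{j+1}_\m(P)\to\H^{j+1}_\m(P))=\Soc\H^{j+1}_\m(P)$ because $x_1\in\m$, so the displayed sequence yields $r_j(P/x_1P)\le\ell_R\big(\Soc(\H^j_\m(P)/x_1\H^j_\m(P))\big)+r_{j+1}(P)$; the heart of the matter is to show that for $n\gg0$ the submodule $\Coker(x_1\colon\H^j_\m(P)\to\H^j_\m(P))$ contributes nothing to $\Soc\H^j_\m(P/x_1P)$, i.e. the extension is maximally non-split on socles. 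This is exactly the surjectivity on socles of the canonical maps $\phi^{p,1}_{\x,P}\colon\H^p(\x;P)\to\H^p_\m(P)$ encoded in the Goto number $\rmg(M)$ (Definition \ref{sur}, \cite{GSa1}): once $x_1\in\m^{\rmg(M)}$ this forces $r_j(P/x_1P)=r_{j+1}(P)$ for all $j$, and summing over $j$ (with $r_0(P)=0$) gives $\sum_jr_j(P/x_1P)=\sum_jr_j(P)$, as needed.

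Two points remain, both bookkeeping. A single integer $n$ must serve for $M$ and simultaneously for all the modules $M'=M/x_1M$ arising in the induction; this is ensured by $\rmg(M/x_1M)\le\rmg(M)$ (Lemma \ref{2.7}, via Corollary \ref{leCMt} for the depth-$0$ passage), together with the fact that $\ell_R(\H^0_\m(M/x_1M))$ — hence the power of $\m$ annihilating it — is bounded in terms of $\ell_R(\H^0_\m(M))$ and $r_1(M)$, independently of $x_1$. Also one must verify that $x_2,\ldots,x_d$ is genuinely a distinguished system of parameters of $M/x_1M$, which follows from $x_1$ being filter-regular together with the compatibility of the dimension filtration with reduction modulo a filter-regular element. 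The main obstacle is the socle computation of the second paragraph — that modding out by the leading element of a sufficiently deep distinguished system of parameters does not enlarge the total socle length $\sum_jr_j$ — and the essential input is precisely the Goto-number surjectivity of \cite{GSa1}; granting that, the remainder of the argument is routine.
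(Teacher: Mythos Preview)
The paper gives no proof of this theorem: it is quoted from the first author's earlier paper (the citation \cite{T}, which in context should point to \cite{Tr}), so there is nothing in the paper itself to compare your argument against.

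That said, your proposal has a real gap at the step you yourself flag as ``the heart of the matter''. You assert that the Goto-number surjectivity of $\phi^{p,1}_{\x,P}$ on socles forces $r_j(P/x_1P)=r_{j+1}(P)$. It does not: what that surjectivity buys (and this is exactly what Lemma~\ref{2.7} extracts from it) is only the inequality $r_{j+1}(P)\le r_j(P/x_1P)$. For the reverse inequality you would need the kernel of $\Soc\H^j_\m(P/x_1P)\to\Soc\H^{j+1}_\m(P)$ to vanish; but that kernel is exactly $\Soc\bigl(\H^j_\m(P)/x_1\H^j_\m(P)\bigr)$, and multiplication by an element of $\m$ on a nonzero Artinian module is never surjective, so this socle is nonzero whenever $\H^j_\m(P)\ne 0$ for some $0<j<d$. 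Taking $x_1$ deeper in $\m$ does not help.

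More tellingly, your argument never invokes the standing hypothesis (from the Setting immediately preceding the statement) that $M$ is sequentially Cohen--Macaulay. This hypothesis is not decorative: the equality $\calN(\q;M)=\sum_j r_j(M)$ for all deep distinguished $\q$ in fact \emph{characterizes} sequentially Cohen--Macaulay modules (this is the thrust of \cite{Tr}, and is mirrored in Theorem~\ref{5800} of the present paper). A proof that makes no use of the Cohen--Macaulayness of the layers $C_i=D_i/D_{i-1}$ therefore cannot succeed. The argument in \cite{Tr} proceeds by induction on the length $\ell$ of the dimension filtration, splitting off the top Cohen--Macaulay layer $L=M/D_{\ell-1}$, rather than by induction on $d$ via reduction modulo $x_1$.
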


\medskip

\section{Hilbert coefficients of socle ideals}

The purpose of this section is to give a characterization of sequentially Cohen-Macaulay rings in term of the Hilbert coefficients of the socle of distinguished parameter ideals. To discuss this, we need the concept of Hilbert coefficients.

 Let $I$ be an $\frak m$-primary ideal of a  Noetherian local ring $(R,\m)$. 
The associated graded ring $\gr_I(R) =\bigoplus_{n\geq 0}I^n/I^{n+1}$
is a standard graded ring with $[\gr_I(R)]_0 = R/I$ Artinian. Let $M$ be a finitely generated $R$-module of dimension $d$.
Therefore the associated graded module $\gr_I(M) =\bigoplus_{n\geq 0} I^nM/I^{n+1}M$ of $I$ with respect to $M$ is a finitely generated graded $\gr_I(R)$–module. The Hilbert-Samuel function of $M$ with respect to $I$ is
$$H(n)=\ell_R(M/I^{n+1}M)=\sum\limits_{i=0}^n\ell_R(I^iM/I^{i+1}M),$$
where $\ell_R(*)$ stands for the length. For sufficiently large $n$, the Hilbert-Samuel function of $M$ with respect to $I$
$H(n)$ is of polynomial type,
$$\ell_R(M/I^{n+1}M)=\sum\limits_{i=0}^d(-1)^ie_i(I,M)\binom{n+d-i}{d-i}.$$
These integers  $e_i(I,M)$   are called the Hilbert coefficients of $M$ with respect to $I$. In the particular case, the leading coefficient  $e_0(I, M)$ is  said to be the multiplicity of $M$ with respect to $I$ and $e_1(I,M)$ is called by Vasconselos(\cite{V2}) the Chern coefficient of $I$  with respect to $M$. When $M=R$, we abbreviate $e_i(I, M)$ to $e_i(I)$ for all $i=1,\ldots,s$.



\medskip


\medskip



\medskip

\noindent


\noindent

\begin{setting}\label{2.6}{\rm
Assume that $R$ is a homomorphic image of a Cohen-Macaulay local ring. Let $\calD = \{\fka_i\}_{0 \le i \le \ell}$ be the dimension filtration of $R$ with $\dim \fka_i=d_i$. We put  $S = R/\fka_{\ell -1}$ and  choose  a distinguished system $x_1, x_2, \ldots, x_d$ of parameters of $R$. Put $\q=(x_1,x_2,\ldots,x_d)$, $\fkb=(x_{d_{\ell-1}+1},\ldots,x_d)$ and $I=\q:\m$.
}
\end{setting}

In fact, the following property serves to characterize sequentially Cohen-Macaulay rings, as we will show in this section.

\medskip

\begin{prop}\label{coe}
Assume that $R$ is sequentially Cohen-Macaulay
and $$\calN(\fkq;R)=\sum\limits_{j \in \Bbb Z}r_j(R).$$ 
Then 
we have
\[e_j(I)-e_j(\q)=  f_{j-1}(\q;R) =  \left\{
\begin{array}{rl}
&(-1)^{d-1}(r_1(R)+r_0(R)), \quad \mbox{if $j=d$,} \\
& (-1)^{j-1}r_{d-j+1}(R) \hspace{1cm} \quad \mbox{otherwise,}
\end{array}
\right.\]
if $e_0(\m;R)> 1$ or $\q\subseteq \m^2$.
\end{prop}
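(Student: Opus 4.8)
The plan is to reduce the computation of the Noetherian coefficients $f_{j-1}(\q;R)$ and of the differences $e_j(I)-e_j(\q)$ to the polynomial identities established in Proposition \ref{P2.7} and in Lemma \ref{3.700}, and then to match coefficients. First I would invoke Fact \ref{F2.5}(4): under the hypothesis $e_0(\m;R)>1$ or $\q\subseteq\m^2$ we have $I^2=\q I$, so $I^{n+1}=\q^nI$ for all $n\ge 0$; combined with Lemma \ref{3.700}, which gives $\q^{n+1}R:\m=\q^n(\q R:\m)=\q^nI=I^{n+1}$, this identifies $\q^{n+1}:\m$ with $I^{n+1}$ exactly. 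Consequently the length function $\ell_R(R/I^{n+1})$ and the function $\ell_R(R/\q^{n+1})$ differ by $\ell_R(I^{n+1}/\q^{n+1})=\ell_R([\q^{n+1}:\m]/\q^{n+1})=\calN(\q^{n+1};R)$ for all large $n$, whence in the ring of numerical polynomials
$$\ell_R(R/\q^{n+1})-\ell_R(R/I^{n+1})=\calN(\q^{n+1};R)=p_{I,R}(n).$$

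Next I would expand both sides into their binomial-coefficient forms. The left-hand side is $\sum_{i=0}^d(-1)^i(e_i(\q)-e_i(I))\binom{n+d-i}{d-i}$ by definition of the Hilbert coefficients, and by Proposition \ref{P2.7} the right-hand side equals $\sum_{i=1}^d r_i(R)\binom{n+i-1}{i-1}+r_0(R)$, which I would rewrite in the standard basis $\{\binom{n+d-i}{d-i}\}_{0\le i\le d}$. The point is that $\binom{n+i-1}{i-1}=\binom{n+i-1}{n}$ corresponds to degree $i-1$, i.e. to the index $d-i+1$ in the ``$e$''-indexing; so the coefficient of $\binom{n+d-(d-i+1)}{d-(d-i+1)}=\binom{n+i-1}{i-1}$ is $(-1)^{d-i+1}(e_{d-i+1}(\q)-e_{d-i+1}(I))$ on the left. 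Equating coefficients degree by degree yields, for each $2\le j\le d$, the value of $e_j(I)-e_j(\q)$ in terms of $r_{d-j+1}(R)$ with the sign $(-1)^{j-1}$; the top-degree term ($j=d$, degree $d-1$) picks up the $\binom{n+d-1}{d-1}$ coefficient $r_1(R)$, and the bottom constant term $r_0(R)$ forces $e_d(I)-e_d(\q)=(-1)^{d-1}(r_1(R)+r_0(R))$. Here one must be slightly careful that the hypothesis $\calN(\fkq;R)=\sum_j r_j(R)$ guarantees we are in the ``equality'' case of Theorem \ref{3.800}, which is exactly what licenses using Proposition \ref{P2.7} with these particular $x_i$'s rather than merely an inequality.

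Finally I would argue the identification $e_j(I)-e_j(\q)=f_{j-1}(\q;R)$. By definition $p_{I,R}(n)=\sum_{i=0}^{d-1}(-1)^if_i(I;R)\binom{n+d-1-i}{d-1-i}$, so the coefficient of $\binom{n+d-1-i}{d-1-i}$ in $p_{I,R}(n)$ is $(-1)^if_i(\q;R)$; on the other hand we have just computed $p_{I,R}(n)=\ell_R(R/\q^{n+1})-\ell_R(R/I^{n+1})$, whose coefficient of the same binomial $\binom{n+(d-1-i)}{(d-1-i)}=\binom{n+d-(i+1)}{d-(i+1)}$ is $(-1)^{i+1}(e_{i+1}(\q)-e_{i+1}(I))=(-1)^i(e_{i+1}(I)-e_{i+1}(\q))$. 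Setting $j=i+1$ gives $f_{j-1}(\q;R)=e_j(I)-e_j(\q)$ for all $1\le j\le d$, and substituting the values from the previous paragraph finishes the proof. The main obstacle, as I see it, is purely bookkeeping: getting the two indexing conventions (the degree-$i$ convention of the Noetherian polynomial versus the $\binom{n+d-i}{d-i}$ convention of the Hilbert–Samuel polynomial) aligned, together with the signs, so that the stated case distinction at $j=d$ comes out with $r_1(R)+r_0(R)$ and not, say, $r_1(R)$ alone; the conceptual input (Fact \ref{F2.5}(4), Lemma \ref{3.700}, Proposition \ref{P2.7}) is already in place.
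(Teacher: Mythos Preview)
Your approach is correct and is essentially the same as the paper's: invoke Fact~\ref{F2.5}(4) to get $I^{n+1}=\fkq^nI$, use Lemma~\ref{3.700} to identify $I^{n+1}/\fkq^{n+1}$ with $(\fkq^{n+1}:\fkm)/\fkq^{n+1}$, and then read off the coefficients from Proposition~\ref{P2.7}. Two small slips to clean up: the socle polynomial you use is $p_{\fkq,R}(n)$, not $p_{I,R}(n)$ (the Noetherian coefficients $f_i$ are taken with respect to $\fkq$), and your parenthetical ``the top-degree term ($j=d$, degree $d-1$)'' is mislabeled---$j=d$ corresponds to the \emph{constant} term, where $r_1(R)\binom{n}{0}$ and the extra $r_0(R)$ combine, while the top-degree term $\binom{n+d-1}{d-1}$ corresponds to $j=1$ with coefficient $r_d(R)$.
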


\begin{proof}
By Fact \ref{F2.5} (4), we have $I^2=\fkq I$ and so $I^{n+1}=\fkq^n I$ for all $n\ge 1$. It follows from Lemma \ref{3.700} that
\begin{eqnarray*}
\ell(R/\fkq^{n+1})-\ell(R/I^{n+1})&=&\ell((\fkq^nI)/\fkq^{n+1})\\
&=& \ell((\fkq^n(\fkq :\m))/\fkq^{n+1})= \ell((\fkq^{n+1}:\fkm)/\fkq^{n+1})\end{eqnarray*}
for all $n\ge 0$.
Since $I^2=\fkq I$, we have $e_0(\fkq)=e_0(I)$. Therefore we have 
$$e_j(I)-e_j(\q)=  f_{j-1}(\q;R)$$
By Theorem \ref{P2.7}, we have 
\[e_j(I)-e_j(\q)=  f_{j-1}(\q;R) =  \left\{
\begin{array}{rl}
&(-1)^{d-1}(r_1(R)+r_0(R)), \quad \mbox{if $j=d$,} \\
& (-1)^{j-1}r_{d-j+1}(R) \hspace{1cm} \quad \mbox{otherwise.}
\end{array}
\right.\]

\end{proof}

\begin{cor}\label{4.300}
Suppose that $M$ is a sequentially Cohen-Macaulay $R$-module. Then  there exists an integer $n \gg 0$ such that for every distinguished parameter ideals $\q$ of $M$ contained in $\m^{n}$, one has  the equality
\[e_j(I)-e_j(\q)=  f_{j-1}(\q;R) =  \left\{
\begin{array}{rl}
&(-1)^{d-1}(r_1(R)+r_0(R)), \quad \mbox{if $j=d$,} \\
& (-1)^{j-1}r_{d-j+1}(R) \hspace{1cm} \quad \mbox{otherwise.}
\end{array}
\right.\]
\end{cor}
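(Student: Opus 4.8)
The plan is to deduce the statement from Proposition \ref{coe} by choosing the integer $n$ so that, for every distinguished parameter ideal $\q \subseteq \m^n$ of $M$, the two hypotheses of Proposition \ref{coe} are in force, namely the "socle equality" $\calN(\fkq;R)=\sum_{j\in\Bbb Z}r_j(R)$ and the containment $\q\subseteq \m^2$. The second is automatic once $n\ge 2$. For the first, I would invoke Theorem \ref{3.800} (the main result of \cite{T}): there exists $n_0 \gg 0$ such that every distinguished parameter ideal $\q$ of $M$ contained in $\m^{n_0}$ satisfies $\calN(\q;M)=\sum_{j\in\Bbb Z}r_j(M)$. Taking $n=\max\{n_0,2\}$ then guarantees both hypotheses simultaneously for all distinguished $\q\subseteq\m^n$.

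With $n$ so chosen, I would then simply apply Proposition \ref{coe} verbatim: since $R$ is sequentially Cohen-Macaulay (here I read "$M$ is a sequentially Cohen-Macaulay $R$-module" in the setting of this section where $M=R$, or else reduce to that case) and $\q\subseteq\m^2$, Proposition \ref{coe} yields exactly
\[e_j(I)-e_j(\q)= f_{j-1}(\q;R) =  \left\{
\begin{array}{rl}
&(-1)^{d-1}(r_1(R)+r_0(R)), \quad \mbox{if $j=d$,} \\
& (-1)^{j-1}r_{d-j+1}(R) \hspace{1cm} \quad \mbox{otherwise.}
\end{array}
\right.\]
This is precisely the assertion of the Corollary, so once the uniform $n$ is in hand there is nothing further to do.

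The only real point requiring care—and the step I expect to be the main obstacle—is the passage from "$R$ sequentially Cohen-Macaulay" in Proposition \ref{coe} to "$M$ sequentially Cohen-Macaulay $R$-module" in the Corollary. If the Corollary is genuinely about an arbitrary module $M$, then I would need the module version of Proposition \ref{coe}, which in turn rests on Proposition \ref{P2.7} and Lemma \ref{3.700} (both already stated for modules $M$) together with Fact \ref{F2.5}(4); the hypothesis "$e_0(\m;R)>1$ or $\q\subseteq\m^2$" enters only through Fact \ref{F2.5}(4) to get $I^2=\q I$, and the containment $\q\subseteq\m^2$ suffices. So the argument goes through mutatis mutandis with $R$ replaced by $M$ throughout, and the notation $e_j(I)=e_j(I,M)$, $f_{j-1}(\q;R)=f_{j-1}(\q;M)$, $r_j=r_j(M)$ understood accordingly. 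I would make this substitution explicit at the start of the proof and otherwise let Theorem \ref{3.800} and Proposition \ref{coe} (in its module form) do all the work.
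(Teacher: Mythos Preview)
Your proposal is correct and matches the paper's own proof, which simply states that the corollary is immediate from Proposition~\ref{coe} and Theorem~\ref{3.800}. Your added care about taking $n\ge 2$ and the $M$ versus $R$ notational issue is appropriate but not elaborated in the paper.
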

\begin{proof}
This is now immediate from Proposition \ref{coe} and Theorem \ref{3.800}.
\end{proof}

\medskip

\noindent

\begin{lem}\label{4.5}
Assume that $S$ is Cohen-Macaulay and 
$$[\q+\fka_{\ell-1}]:\fkm=\q:\m+\fka_{\ell-1}.$$
 Then 
$$ e_{j}(I;R/\fkb)-e_{j}(\fkq;R/\fkb)=
 \begin{cases}(-1)^{s}((e_{s+j}(I;R)-e_{s+j}(\q;R)))+r_d(R)&\text{if $j=1$,}
\\
(-1)^{s} (e_{s+j}(I;R)-e_{s+j}(\q;R))&\text{if } j\ge 2,\\
 \end{cases}$$  
where $s=d-d_{\ell-1}$.
\end{lem}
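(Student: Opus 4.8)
The statement relates the Hilbert coefficients of the module $R/\fkb$ (where $\fkb=(x_{d_{\ell-1}+1},\dots,x_d)$) to those of $R$ itself, after twisting by the socle ideal $I=\q:\m$. The natural strategy is to peel off the parameters $x_{d_{\ell-1}+1},\dots,x_d$ one at a time and track how the difference $e_j(I;*)-e_j(\q;*)$ transforms at each step. First I would reduce to the case $s=1$, i.e.\ to removing a single parameter element $x=x_d$, by induction on $s=d-d_{\ell-1}$: since $\fka_{\ell-1}$ lives inside $R/\fkb$ only in the image $\overline{\fka_{\ell-1}}$ and $S=R/\fka_{\ell-1}$ stays Cohen-Macaulay while a distinguished parameter remains distinguished after this quotient (this is the content of Lemma \ref{property} type statements on Goto sequences), the hypotheses propagate. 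The sign $(-1)^s$ and the fact that the $r_d(R)$ correction appears only at the top step (when one passes from $j=1$ in $R/\fkb$ to $j=s+1$ in $R$) should drop out of this induction.

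\textbf{The single-step computation.} For $s=1$, write $x=x_d$, $\overline R=R/\fkb=R/(x)$. The key input is that $x$ is $S=R/\fka_{\ell-1}$-regular (it is a distinguished parameter), hence the exact sequence $0\to S\xrightarrow{\cdot x}S\to S/xS\to 0$ together with the hypothesis $[\q+\fka_{\ell-1}]:\fkm=\q:\m+\fka_{\ell-1}$ — which by Lemma \ref{2.101} propagates to $[\q^{n+1}+\fka_{\ell-1}+\fkb']:\fkm=[\q^{n+1}:\fkm]+\fka_{\ell-1}+\fkb'$ for the truncated parameter ideals — lets me compare $\ell(R/I^{n+1})$ with $\ell((R/xR)/\bar I^{n+1})$. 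Concretely I would compute $\ell(\overline R/\overline\q^{\,n+1}\overline R)-\ell(\overline R/\overline I^{\,n+1}\overline R)$ in terms of the corresponding quantity over $R$ using the standard fact that for an $M$-regular element $x$ one has $\ell(M/(\q^{n+1}M+xM))=\ell(M/\q^{n+1}M)-\ell(M/\q^{n+1}M)$ shifted appropriately; taking the alternating-sum expansions of Hilbert--Samuel polynomials, the operation ``quotient by one regular parameter'' sends $e_i(\cdot;M)\binom{n+d-i}{d-i}$ to a polynomial whose coefficients are $\pm e_{i}(\cdot;M/xM)$, yielding the index shift $e_j(\cdot;R/\fkb)\leftrightarrow e_{j+1}(\cdot;R)$ with a sign flip. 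The $r_d(R)=\ell(\q L:\m/\q L)=\ell(\H^d_\fkm(R)^\vee\text{-socle})$ term at $j=1$ arises because at the bottom the relevant exact sequence of socles $0\to\Hom(k,N/\q^{n+1}N)\to\Hom(k,M/\q^{n+1}M)\to\Hom(k,L/\q^{n+1}L)\to 0$ from the proof of Proposition \ref{P2.7} contributes an extra summand which in the $\overline R$-picture lands in degree-$1$ Hilbert data; this is exactly the discrepancy recorded in Corollary \ref{leCMt}/Lemma \ref{2.7}, $r_d(R)\le r_{d-1}(R/xR)$, sharpened to an equality under the present Cohen-Macaulayness hypothesis on $S$.

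\textbf{Assembling.} Combining Proposition \ref{coe} (which identifies $e_j(I;R)-e_j(\q;R)$ with $f_{j-1}(\q;R)$ and with the explicit $(-1)^{j-1}r_{d-j+1}(R)$ formula) with the single-step shift, the iteration over the $s$ parameters $x_{d_{\ell-1}+1},\dots,x_d$ accumulates the sign $(-1)^s$ and re-indexes by $s$, giving $e_j(I;R/\fkb)-e_j(\q;R/\fkb)=(-1)^s(e_{s+j}(I;R)-e_{s+j}(\q;R))$ for $j\ge 2$ and the same with the additive $+r_d(R)$ for $j=1$. I expect the main obstacle to be the bookkeeping at the junction $j=1$: one must verify carefully that the socle-sequence correction term transfers cleanly through all $s$ regular-element quotients and does not get multiplied by an extra sign or picked up at intermediate steps, which requires checking that $[\q^{n+1}+\fka_{\ell-1}+(x_{d_{\ell-1}+1},\dots,x_{d-t})]:\fkm$ behaves well for every $0\le t\le s$ — this is where Lemma \ref{2.101} applied to the Cohen-Macaulay module $S$ with its distinguished regular sequence does the real work, and its hypothesis is precisely the stability of the socle equality that we are given.
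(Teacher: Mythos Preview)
Your inductive peel-one-parameter strategy is genuinely different from the paper's proof, but as written it has a real gap and is also considerably more involved than necessary.

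\textbf{The gap.} In your ``Assembling'' step you invoke Proposition~\ref{coe} to identify $e_j(I;R)-e_j(\q;R)$ with $(-1)^{j-1}r_{d-j+1}(R)$. But Proposition~\ref{coe} assumes $R$ is sequentially Cohen--Macaulay, and Lemma~\ref{4.5} does \emph{not} assume that---only that the top quotient $S=R/\fka_{\ell-1}$ is Cohen--Macaulay. Indeed, Lemma~\ref{4.5} is used in Proposition~\ref{P4.61} precisely to \emph{prove} that $R$ is sequentially Cohen--Macaulay, so invoking \ref{coe} here is circular. Your single-step computation has a related problem: $x=x_d$ is $S$-regular but need not be $R$-regular, so the ``standard fact that for an $M$-regular element $x$ one has $\ell(M/(\q^{n+1}M+xM))=\ldots$'' does not apply to $M=R$; you would have to route everything through $S$ anyway, and then the induction is doing no real work.

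\textbf{What the paper does instead.} The paper avoids induction on $s$ entirely. It uses the filtration $0\to\fka_{\ell-1}\to R\to S\to 0$ directly and proves the key claim
\[
(I^n+\fkb)\cap\fka_{\ell-1}=I^n\cap\fka_{\ell-1}\quad\text{for all }n,
\]
by exploiting that $\gr_{IS}(S)$ is Cohen--Macaulay (since $(IS)^2=(\q S)(IS)$), hence $I^nS:x_d=I^{n-1}S$, and then peeling off the $x_i$'s inside that intersection. From this claim the two exact sequences $0\to\fka_{\ell-1}/(I^n\cap\fka_{\ell-1})\to R/I^n\to S/I^nS\to 0$ and its $\fkb$-thickened analogue give
\[
\ell(R/I^n)-\ell(R/(I^n+\fkb))=\ell(S/I^nS)-\ell(S/(I^n+\fkb)S),
\]
and the right-hand side is computed explicitly from the Cohen--Macaulayness of $S$ (this is where the single $r_d(R)=r_d(S)$ term enters, at $j=1$). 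The same computation for $\q$ in place of $I$ and subtraction finishes. The point is that all the delicate Hilbert-coefficient bookkeeping happens on $S$, where it is easy, and the comparison to $R$ is a pure length identity---no sequential Cohen--Macaulay hypothesis on $R$ is needed.
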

\begin{proof}
Since $[\q+\fka_{\ell-1}]:\fkm=\q:\m+\fka_{\ell-1}$,  we have $IS=\q S:\m S$. 
\begin{claim}\label{calim 1}
$(I^n+\fkb)\cap \fka_{\ell-1}=I^n\cap \fka_{\ell-1}$ for all $n$.
\end{claim}
\begin{proof}
Since $\fkq\subseteq \m^2$ and $S$ is Cohen-Macaulay, we have $(IS)^2 = (\q S)(IS)$ by   \cite[Theorem 3.7]{CHV}, so that $\operatorname{gr}_{IS}(S)$ is a Cohen-Macaulay ring. Therefore, we have $$I^nS:x_d = I^{n -1}S$$ for all $n \in \Bbb Z$. Consequently, $(I^n+\fka_{\ell-1}):x_d=I^{n-1}+\fka_{\ell-1}$.

Let $a\in (I^n+(x_d))\cap \fka_{\ell-1}$. Write $a=b+x_dc$ for $b\in I^n$ and $c\in R$. Then $c\in (I^n+\fka_{\ell-1}):x_d=I^{n-1}+\fka_{\ell-1}$. Thus since $\q$ is a distinguish parameter ideal, we have $x_dc\in x_dI^{n-1}+x_d\fka_{\ell-1}\subseteq I^n$. Therefore $a\in I^n\cap \fka_{\ell-1}$. Hence $I^n\cap \fka_{\ell-1}=(I^n+(x_d))\cap \fka_{\ell-1}$. By induction, we have
\begin{eqnarray*}
I^n\cap \fka_{\ell-1}&=&(I^n+(x_d))\cap \fka_{\ell-1}\\
&=&  \ldots = (I^n+\fkb)\cap \fka_{\ell-1},
\end{eqnarray*}
as required.
\end{proof}

 It follows from the above claim and the following exact sequences
$$0\to \fka_{\ell-1}/I^n\cap \fka_{\ell-1}\to R/I^n\to S/I^nS\to 0$$
for all $n\ge 0$ and 
$$0\to \fka_{\ell-1}/(\fkb+I^n)\cap \fka_{\ell-1}\to R/\fkb+I^n\to S/(\fkb+I^n)S\to 0$$
for all $n\ge 0$, that we have
$$\ell(R/I^n)-\ell(S/I^nS)=\ell(R/\fkb+I^n)-\ell(S/(\fkb+I^n)S)$$
Since $S$ is Cohen-Macaulay, by Lemma \ref{coe}, we have
$$\ell(S/I^nS)=e_0(I;S)\binom{n+d}{d}-r_d(S)\binom{n+d-1}{d-1}$$
$$\ell(S/(\fkb+I^n)S)=e_0(I;S)\binom{n+d_{\ell-1}}{d_{\ell-1}}-r_d(S)\binom{n+d_{\ell-1}-1}{d_{\ell}-1}$$
for all $n\ge 0$. Consequently, it follows on comparing the coefficients of the polynomials in the above equality  that 
$$ e_{j}(I;R/\fkb)=
 \begin{cases}(-1)^{s}e_{s+1}(I;R)+r_d(S) &\text{if $j=1$,}
\\
(-1)^{s}e_{s+j}(I;R)&\text{if } j\ge 2\\
 \end{cases}$$  
Similarly, we have $$ e_{j}(\q;R/\fkb)=(-1)^{s}e_{s+j}(I;R)$$  
for all $1\le j\le d_{\ell-1}$. It follows that
$$ e_{j}(I;R/\fkb)-e_{j}(\fkq;R/\fkb)=
 \begin{cases}(-1)^{s}((e_{s+j}(I;R)-e_{s+j}(\q;R)))+r_d(R)&\text{if $j=1$,}
\\
(-1)^{s} (e_{s+j}(I;R)-e_{s+j}(\q;R))&\text{if } j\ge 2.\\
 \end{cases}$$

\end{proof}

\begin{proposition}\label{P4.60}
 Assume that $d\ge 2$ and there exists  an integer $n$ such that for all distinguish parameter ideals $\fkq\subseteq \frak m^{n}$ we
    have
$$ e_1(I)-e_1(\fkq)\leq r_{d}(R).$$
Then $S$ is Cohen-Macaulay.
\end{proposition}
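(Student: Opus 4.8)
The plan is to prove the contrapositive-flavored statement directly: assuming the numerical inequality $e_1(I)-e_1(\fkq)\le r_d(R)$ holds for all distinguished parameter ideals contained in a fixed power $\fkm^n$, I will show that the top factor $C_\ell = S = R/\fka_{\ell-1}$ of the dimension filtration is Cohen--Macaulay. Since $R$ is a homomorphic image of a Cohen--Macaulay local ring, I may invoke the existence of Goto sequences (Corollary \ref{exists001}), Goto sequences of type I (Proposition \ref{exists}), and the fact (Corollary \ref{leCMt}) that after passing to a sufficiently deep power of $\fkm$ one has $r_d(M)\le r_{d-1}(M/xM)$ for a generic parameter element $x$. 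The strategy is an induction on $d=\dim R$, using a distinguished parameter element $x=x_1$ to reduce to $R/x_1R$, whose top dimension-filtration factor is related to that of $R$ via Lemma \ref{property1}.

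First I would set up the reduction: choose, by Proposition \ref{exists}, a distinguished system of parameters $x_1,\dots,x_d$ forming a Goto sequence of type I contained in $\fkm^N$ for $N$ large enough that both the hypothesis of the proposition and the conclusion of Corollary \ref{leCMt} apply; put $\fkq=(x_1,\dots,x_d)$ and $I=\fkq:\fkm$. The key is to compare $e_1(I)-e_1(\fkq)$ with the corresponding difference for $\fkq_1=(x_2,\dots,x_d)$ in the ring $\overline R = R/x_1R$ (or rather modulo $\H^0_\fkm$), and to track the socle dimensions $r_j$. Because $x_1$ is part of a $d$-sequence (Lemma \ref{property}(2)) and sits in a high power of $\fkm$, the standard Koszul/associated-graded bookkeeping gives a controlled relation; combined with $r_d(R)\le r_{d-1}(\overline R)$ from Lemma \ref{2.7}/Corollary \ref{leCMt}, this should yield that $\overline R$ satisfies the same numerical hypothesis with $d$ replaced by $d-1$. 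By induction $\overline R$ has Cohen--Macaulay top filtration factor, and then Lemma \ref{property1} (whose hypothesis "$M/N$ Cohen--Macaulay $\Rightarrow$ $M/D_{\ell-1}$ Cohen--Macaulay" is exactly engineered for this) lifts Cohen--Macaulayness of $C_\ell$ back to $R$.

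The base case $d=2$ is where the real content sits and should be handled separately: here $S=R/\fka_1$ has dimension $2$, and one must show $\depth S\ge 2$, i.e. $\H^1_\fkm(S)=0$. Writing $S$'s top factor, the hypothesis $e_1(I)-e_1(\fkq)\le r_2(R)$ together with the explicit formula from Proposition \ref{coe}/Lemma \ref{4.5} for the sequentially-Cohen--Macaulay (equivalently, one-step) situation forces the "defect" term measuring $\ell_R(\H^1_\fkm(S)/\dots)$ or the relevant length of $\H^1_\fkm$ to vanish; one extracts this by comparing the Hilbert--Samuel polynomial of $I=\fkq:\fkm$ against that of $\fkq$ using Lemma \ref{3.700}-type identities ($\fkq^{n+1}S:\fkm = \fkq^n(\fkq S:\fkm)$) and the exact sequence $0\to \fka_1/(\fkq^n\cap\fka_1)\to R/\fkq^n\to S/\fkq^nS\to 0$ à la the proof of Lemma \ref{4.5}.

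The main obstacle I anticipate is controlling the inductive descent precisely: cutting by $x_1$ changes both $e_1$ and the socle invariants, and I must make sure the inequality is preserved rather than weakened in the wrong direction. Concretely, one needs that the passage $R\rightsquigarrow R/x_1R$ converts $e_1(I)-e_1(\fkq)$ into (a quantity bounded by) $e_1(\overline I)-e_1(\overline\fkq)$ up to the correction $r_d(R)$ that Lemma \ref{4.5} predicts, and simultaneously that the target bound $r_d(R)$ is replaced by $r_{d-1}(\overline R)\ge r_d(R)$ — so the directions must align. This hinges on choosing $x_1$ from a Goto sequence of type I (so that the socle-length inequalities $r_{d-j}(R/\fkq_j R)\le r_{d-j-1}(R/\fkq_{j+1}R)$ are available) and on the good-parameter-ideal machinery of Section 3; getting all the signs and indices consistent is the delicate bookkeeping that the full proof will need to carry out carefully.
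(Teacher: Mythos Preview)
Your inductive scheme has two genuine gaps.

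\textbf{The base case is circular.} For $d=2$ you propose to invoke Proposition~\ref{coe} and Lemma~\ref{4.5}, but Proposition~\ref{coe} assumes $R$ is sequentially Cohen--Macaulay and Lemma~\ref{4.5} assumes $S$ is Cohen--Macaulay---exactly what you are trying to prove. There is no ready-made formula for $e_1(I)-e_1(\fkq)$ in dimension~$2$ without such structural hypotheses.

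\textbf{The induction does not close.} Even granting the inequality $e_1(I;R)-e_1(\fkq;R)\ge e_1(\overline I;\overline R)-e_1(\overline\fkq;\overline R)$ and $r_d(R)\le r_{d-1}(\overline R)$, you obtain the bound only for the \emph{single} parameter ideal $\overline\fkq=\fkq/x_1$ of $\overline R$. The inductive hypothesis demands the bound for \emph{all} distinguished parameter ideals of $\overline R$ contained in some power of the maximal ideal. Lifting an arbitrary distinguished system of parameters of $\overline R$ to a distinguished system of $R$ is not automatic; in the paper this kind of lifting (see the proof of Proposition~\ref{P4.61}) already uses that $S$ is Cohen--Macaulay.

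The paper avoids both problems by \emph{not} doing induction on $d$. It builds one specific distinguished parameter ideal $\fkq$ from a Goto sequence of type~I $x_1,\dots,x_{d-2}$ together with two further carefully chosen elements $x_{d-1},x_d$, and then applies the hypothesis just once, to this $\fkq$. Using that each $x_i$ is superficial, one gets a chain
\[
r_d(R)\ \ge\ e_1(I;R)-e_1(\fkq;R)\ \ge\ \cdots\ \ge\ e_1(x_dB:_B\fkm;B)-e_1(x_dB;B)\ =\ r_1(B),
\]
where $B=R/\fkq_{d-1}$ has dimension~$1$; the final \emph{equality} is available precisely because every one-dimensional ring is sequentially Cohen--Macaulay, so Corollary~\ref{4.300} applies for free. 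Separately, with $A=R/\fkq_{d-2}$ of dimension~$2$ and $N$ its unmixed part, $A/N$ is generalized Cohen--Macaulay, and the choice of $x_{d-1}$ forces $r_1(B)=r_1(A/N)+r_2(A)$; the Goto-type-I property gives $r_2(A)\ge r_d(R)$. Combining yields $r_2(A)\ge r_1(A/N)+r_2(A)$, hence $r_1(A/N)=0$, so $A/N$ is Cohen--Macaulay, and only then does Lemma~\ref{property1} lift this to $S$. The crucial idea you are missing is to descend all the way to dimension~$1$ (where the $e_1$-formula is free) and to exploit the generalized Cohen--Macaulay structure in dimension~$2$ to split $r_1(B)$.
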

\begin{proof}
In the case in which $e_0(\m;R)=1$, we have  $e_0(\m;S)=1$, because $\dim \fka_{\ell-1}<\dim R$. And so the result in this case follws from $S$ is unmixed and Theorem 40.6 in \cite{Na}. Thus we suppose henceforth in this proof  that  $e_0(\m;R)>1$.

By Proposition \ref{exists}, there exists a Goto sequence $x_1,\ldots,x_{d-2}$ of type I in  $\m^n$.
Let  $\q_{d-2}=(x_1,\ldots,x_{d-2})$ and $A=R/\q_{d-2}$ and let $N$ denote the unmixed component of $A$. Then $A/N$ is a generalized Cohen-Macaulay ring since $\dim A/N=2$ and $A/N$ is unmixed. Therefore there exists an integer $n_0>n$  such that for all parameters  $x\in\m^{n_0}$, we have $r_1(A/(x)+N)=r_1(A/N)+r_2(A/N)$. Suppose that $d_{i_0}<d-2\le d_{i_0+1}$ for some $i_0$. 
Then $\Ann(\fka_{i_0})+\q_{d-2}$ is an $\m$-primary ideal of $R$. Then we can choose $x_{d-1}\in \m^{n_0}\cap \Ann(\fka_{i_0})$ as in Proposition \ref{exists element}.

Let $\q_{d-1}=(\q_{d-2},x_{d-1})$ and $B=R/\q_{d-1}$. Since $\dim B=1$, $B$ is sequentially Cohen-Macaulay ring. It follows from $\Ann \fka_{\ell-1}+\q_{d-1}$ is an $\m$-primary ideal of $R$ and Corollary \ref{4.300}, we have choose $x_d\in\Ann \fka_{\ell-1}$ such that $$e_1(x_dB:_B\m B;B)-e_1(x_dB;B)=r_1(B).$$


 Since $e(\m;R)>1$, by Proposition 2.3 in \cite{GSa1}, we get that $\m I^n = \m\q^n$ for all $n$. Therefore $I^n\subseteq \q^n:\m$ for all $n$. Put $G=\bigoplus\limits_{n\ge 0}\q^n/\q^{n+1}$ and $\fkM=\m/\q\oplus\bigoplus\limits_{n\ge 1}\q^n/\q^{n+1}$. 
Then we have $((0):\fkM)_n=[\q^n\cap (\q^{n+1}:\m)]/\q^{n+1}$ for all $n$. Thus
we have $((0):\fkM)_n=(\q^{n+1}:\m)/\q^{n+1}$ for all large $n$.
 Since $x_1,\ldots,x_d$ is a Goto sequence, by Lemma \ref{property}, $x_1,x_2,\ldots,x_d$ is $d$-sequence. Therefore, $x_1$ is a superficial element of $R$ with respect to $\q$. And so, we have $\q^{n+1}:x_1=\q^n$ for all $n\ge 0$. It follows from the exact sequences
$$0\to \frac{I^{n}\cap (\q^{n+1}:x_1)}{\q^{n}}\to \frac{I^{n}}{\q^{n}}\to \frac{I^{n+1}}{\q^{n+1}}\to \frac{I^{n+1}}{x_1I^n+\q^{n+1}}\to 0$$
$$\text{and} \ 0\to \frac{I^{n+1}\cap (x_1)+\q^{n+1}}{x_1I^n+\q^{n+1}}\to \frac{I^{n+1}}{x_1I^n+\q^{n+1}}\to \frac{I^{n+1}+(x_1)}{\q^{n+1}+(x_1)}\to 0$$
for 
all $n\ge 0$ that
 $$\ell(\frac{I^{n+1}}{\q^{n+1}})-\ell(\frac{I^{n}}{\q^{n}})=\ell(\frac{I^{n+1}}{x_1I^n+\q^{n+1}})\ge \ell(\frac{I^{n+1}+(x_1)}{\q^{n+1}+(x_1)}).$$
Consequenly, we have
$$e_1(I;R)-e_1(\q;R)\ge e_1(\q\overline{R}:\m\overline{R};\overline{R})-e_1(\q\overline{R};\overline{R})$$
because $\ell(R/\q^{n+1})-\ell(R/I^{n+1})=\ell(I^{n+1}/\q^{n+1})$ for all $n\ge 0$. Proceed inductively,  we have
$$ e_1(I;R)-e_1(\q;R)\ge e_1(x_dB:_B\m B;B)-e_1(x_dB;B)=r_1(B),$$
because of the choice of $x_d$.
However, since $x_1,\ldots,x_d$ is a Goto sequence, by Lemma \ref{property}, $\q$ is a distinguish parameter ideal and $ r_2(A)\ge r_d(R)$.
By hypothesis, $r_d(R)\ge e_1(I;R)-e_1(\q;R).$ Therefore, $ r_2(A)\ge r_1(B)$.
On the other hand, it follows from the exact sequence
$$0\to N\to A\to A/N\to 0$$
and $x_{d-1}$ is a regular of $A/N$ that $r_2(A/N)=r_2(A)$ and
$$0\to N/x_{d-1}N\to B\to A/(x_{d-1})+N\to 0.$$
Since $\dim N/x_{d-1}N=0$, we have $\H^1_\m(B)=\H^1_\m(A/(x_{d-1})+N)$, and so 
\begin{eqnarray*}
r_1(B)&=&r_1(A/(x_{d-1})+N)\\
&=&r_1(A/N)+r_2(A/N)=r_1(A/N)+r_2(A),
\end{eqnarray*}
because of the choice of $x_{d_1}$.
Therefore we have
 $r_1(A/N)=0$, and so $r_1(S/\q_{d-2}S)=0$. Hence $S$ is Cohen-Macaulay, because of Lemma \ref{property1}, and the proof is complete.
\end{proof}

The next corollary is now immediate.
\begin{cor}\label{C6.4}
  For all  integers $n$  there exists  a parameter ideal $\frak q\subseteq \frak m^n$, we
    have
$$ r(R)\leqslant e_1(I;R)-e_1(\fkq;R),$$
where $I=\fkq:\fkm$.
\end{cor}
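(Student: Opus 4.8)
The plan is to read Corollary \ref{C6.4} directly off the proof of Proposition \ref{P4.60}: for any prescribed $n$, the distinguished parameter ideal built there already satisfies the desired estimate, so the conclusion ``$S$ is Cohen-Macaulay'' is never actually invoked. Fix $n$. The cases $d\le 1$ I would dispose of separately; a one-dimensional module is automatically sequentially Cohen-Macaulay, so for a distinguished parameter ideal $\fkq=(x)\subseteq\fkm^n$ with $x$ in a sufficiently high power of $\fkm$, Lemma \ref{3.700} and Theorem \ref{3.800} give $e_1(\fkq:\fkm)-e_1(\fkq)=\calN(\fkq;R)=\sum_{j}r_j(R)\ge r_d(R)$. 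So assume $d\ge 2$, and, splitting off the case $e_0(\fkm;R)=1$ (where $S=R/\fka_{\ell-1}$ is regular by Nagata's theorem \cite{Na} and $r_d(R)=1$), assume $e_0(\fkm;R)>1$.

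Now I would rerun, almost verbatim, the construction in the proof of Proposition \ref{P4.60}. By Proposition \ref{exists} choose a Goto sequence of type I $x_1,\dots,x_{d-2}$ inside $\fkm^n$; set $A=R/(x_1,\dots,x_{d-2})$ and let $N$ be its unmixed component. Enlarging $n$ if needed, choose via Lemma \ref{exists element} a parameter $x_{d-1}\in\fkm^{n}\cap\Ann(\fka_{i_0})$ with $r_1\bigl(A/(x_{d-1})+N\bigr)=r_1(A/N)+r_2(A/N)$, and then via Corollary \ref{4.300} a parameter $x_d\in\fkm^{n}\cap\Ann(\fka_{\ell-1})$ with $e_1(x_dB:\fkm B;B)-e_1(x_dB;B)=r_1(B)$, where $B=R/(x_1,\dots,x_{d-1})$. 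Put $\fkq=(x_1,\dots,x_d)\subseteq\fkm^n$ and $I=\fkq:\fkm$. Using $\fkm I^m=\fkm\fkq^m$ for all $m$ (valid since $e_0(\fkm;R)>1$), the two short exact sequences displayed in the proof of Proposition \ref{P4.60}, the superficiality of $x_1$ for $\fkq$, and a descending induction peeling off $x_1,\dots,x_d$, one obtains
$$e_1(I;R)-e_1(\fkq;R)\ \ge\ e_1(x_dB:\fkm B;B)-e_1(x_dB;B)\ =\ r_1(B).$$
Finally, from $0\to N\to A\to A/N\to 0$, from $0\to N/x_{d-1}N\to B\to A/(x_{d-1})+N\to 0$, and from $\dim N/x_{d-1}N=0$, one reads off $r_1(B)=r_1(A/N)+r_2(A/N)=r_1(A/N)+r_2(A)\ge r_2(A)$; while the Goto-type-I condition on $x_1,\dots,x_{d-2}$ gives $r_d(R)\le r_{d-1}(R/(x_1))\le\cdots\le r_2(A)$. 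Hence $e_1(I;R)-e_1(\fkq;R)\ge r_d(R)=r(R)$, which is the inequality asserted in the corollary.

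I expect the only genuine work to be the descending induction producing the first displayed inequality — that is, controlling how $e_1(\fkq:\fkm)-e_1(\fkq)$ behaves when passing successively from $R$ to $R/(x_1)$, to $R/(x_1,x_2)$, and so on, via the superficial element $x_1$ and those two exact sequences. But this is precisely what is already carried out inside the proof of Proposition \ref{P4.60}, which is why the corollary is ``immediate''; the only remaining items needing (elementary) separate comment are the low-dimensional case and the case $e_0(\fkm;R)=1$, treated above.
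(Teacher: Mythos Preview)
Your approach is correct and is exactly what the paper intends: the corollary is immediate from the \emph{proof} (not the statement) of Proposition~\ref{P4.60}, because the chain of inequalities built there,
\[
e_1(I;R)-e_1(\fkq;R)\ \ge\ r_1(B)\ =\ r_1(A/N)+r_2(A)\ \ge\ r_2(A)\ \ge\ r_d(R),
\]
holds for the specific distinguished parameter ideal $\fkq\subseteq\fkm^n$ constructed in that proof, entirely independently of the upper-bound hypothesis of Proposition~\ref{P4.60}. One minor remark: in the case $e_0(\fkm;R)=1$ you stop at ``$r_d(R)=1$'' without actually exhibiting a $\fkq$ with $e_1(I)-e_1(\fkq)\ge 1$; the paper does not spell this out either, so the loose end is shared rather than a flaw in your reading.
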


The observation in Proposition \ref{P4.60} and Lemma \ref{4.5} provides a clue to a characterization of sequentially Cohen-Macaulay rings in terms of Hilbert coefficients of socle parameter ideals.

\begin{proposition}\label{P4.61}
Assume that there exists  an integer $n$ such that for all distinguish parameter ideals $\fkq\subseteq \frak m^{n}$ we
    have
$$ r_{j}(R)\ge (-1)^{d-j}(e_{d-j+1}(I)-e_{d-j+1}(\fkq)),$$
for all $2\le j\in\Lambda(R)$.
Then $R$ is sequentially Cohen-Macaulay.
\end{proposition}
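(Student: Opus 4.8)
The plan is to prove this by induction on $\ell = \sharp\Lambda(R)$, the length of the dimension filtration, reducing the sequential Cohen-Macaulayness of $R$ to that of the lower-dimensional pieces. If $\ell = 1$ then $R$ is already unmixed, so sequential Cohen-Macaulayness amounts to Cohen-Macaulayness, and for $d \ge 2$ the hypothesis with $j = d$ reads $r_d(R) \ge e_1(I) - e_1(\fkq)$ for all distinguished $\fkq \subseteq \fkm^n$, which is exactly the hypothesis of Proposition \ref{P4.60}; since $S = R$ in this case, we conclude $R = S$ is Cohen-Macaulay. (The case $d \le 1$ is trivial since every one-dimensional ring, and every Artinian ring, is sequentially Cohen-Macaulay; one should dispose of it first.)

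For the inductive step with $\ell \ge 2$, the first goal is to show $S = R/\fka_{\ell-1}$ is Cohen-Macaulay. This follows from Proposition \ref{P4.60} applied with the case $j = d$ of the hypothesis: $r_d(R) \ge e_1(I) - e_1(\fkq)$, which is precisely what Proposition \ref{P4.60} needs to conclude that $S$ is Cohen-Macaulay (here $d \ge 2$ because $\ell \ge 2$ forces $d = d_\ell > d_{\ell-1} \ge 0$, and a genuine filtration of length $\ge 2$ in fact gives $d \ge 2$). Next I would pass to $\overline{R} = R/\fkb$ where $\fkb = (x_{d_{\ell-1}+1},\dots,x_d)$ for a suitable distinguished system of parameters coming from a Goto sequence of type I (which exists by Proposition \ref{exists}), chosen inside $\fkm^m$ for $m$ large enough to absorb all the bounds. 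The key computational input is Lemma \ref{4.5}: since $S$ is now known to be Cohen-Macaulay and the colon condition $[\fkq + \fka_{\ell-1}]:\fkm = (\fkq:\fkm) + \fka_{\ell-1}$ holds (this is where one uses that the parameters were taken from a Goto sequence — cf. Fact \ref{F2.5}(2) and the distinguished property), Lemma \ref{4.5} translates the inequalities $r_j(R) \ge (-1)^{d-j}(e_{d-j+1}(I) - e_{d-j+1}(\fkq))$ for $2 \le j \in \Lambda(R)$ with $j < d$ into the corresponding inequalities $r_{j}(\overline{R}) \ge (-1)^{\dim\overline{R} - j}(e_{\dim\overline{R}-j+1}(I_{\overline{R}}) - e_{\dim\overline{R}-j+1}(\fkq\overline{R}))$ for $2 \le j \in \Lambda(\overline{R})$, using that $\Lambda(\overline{R}) = \{d_1,\dots,d_{\ell-1}\}$ and the index shift $s = d - d_{\ell-1}$, together with $r_j(\overline{R})$ relating to $r_{s+j}(R)$ via the Goto-sequence property $r_i(M) \le r_{i-1}(M/xM)$ from Lemma \ref{2.7}.

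Now $\overline{R}$ has dimension filtration of length $\ell - 1$ (namely $\{\fka_i/\fkb\}$, with $\fka_{\ell-1}/\fkb$ the top proper term), so by the induction hypothesis $\overline{R}$ is sequentially Cohen-Macaulay. Combined with $S = R/\fka_{\ell-1}$ being Cohen-Macaulay of dimension $d$, I would then conclude that $R$ itself is sequentially Cohen-Macaulay: the associated graded pieces $C_i = \fka_i/\fka_{i-1}$ for $i \le \ell-1$ are the same as the corresponding pieces of $\overline{R}/(\text{image of }\fkb)$ up to the regular sequence $x_{d_{\ell-1}+1},\dots,x_d$, hence Cohen-Macaulay by the induction, while $C_\ell = R/\fka_{\ell-1} = S$ is Cohen-Macaulay directly; this is essentially the content of Lemma \ref{property1} and the standard fact that a ring is sequentially Cohen-Macaulay iff its top quotient $R/D_{\ell-1}$ is Cohen-Macaulay and $D_{\ell-1}$ is sequentially Cohen-Macaulay.

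The main obstacle I anticipate is bookkeeping the passage between $R$ and $\overline{R} = R/\fkb$: one must be careful that the integer $n$ works uniformly — i.e., that after shrinking to $\fkm^m$ with $m$ sufficiently large (larger than the Goto numbers of all the relevant modules and larger than the thresholds appearing in Corollary \ref{leCMt} and Corollary \ref{4.300}), the distinguished parameter ideals of $\overline{R}$ that arise genuinely lift to distinguished parameter ideals of $R$ satisfying the original hypothesis, so that the induction is legitimately applicable. A second delicate point is verifying the colon condition $[\fkq + \fka_{\ell-1}]:\fkm = (\fkq:\fkm) + \fka_{\ell-1}$ in our (not yet known to be sequentially Cohen-Macaulay) ring $R$; for this one leans on the Goto-sequence construction and Lemma \ref{property}, exactly as in the proof of Proposition \ref{P4.60}. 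Once these two uniformity/compatibility issues are handled, the rest is an assembly of Lemma \ref{4.5}, Proposition \ref{P4.60}, Lemma \ref{property1}, and the inductive hypothesis.
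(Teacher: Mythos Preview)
Your overall architecture matches the paper's: induct downward by passing to $\overline{R}=R/\fkb$, use Proposition~\ref{P4.60} with $j=d$ to get $S=R/\fka_{\ell-1}$ Cohen--Macaulay, invoke Lemma~\ref{4.5} to transport the Hilbert-coefficient inequalities, and then conclude. (The paper inducts on $d$ rather than $\ell$, but since $\dim\overline R = d_{\ell-1}<d$ and $\sharp\Lambda(\overline R)=\ell-1$, the two inductions are interchangeable.)

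The genuine gap is in how you relate $r_j(\overline{R})$ to the invariants of $R$. You propose to use Lemma~\ref{2.7}, which gives $r_i(M)\le r_{i-1}(M/xM)$ and hence, after $s$ steps, an inequality between $r_j(\overline R)$ and $r_{j+s}(R)$. But that is the wrong comparison. Work out the index shift in Lemma~\ref{4.5}: setting $j'=d_{\ell-1}-j+1$ one gets $s+j'=d-j+1$, so the right-hand side of the desired inequality for $\overline R$ at level $j$ equals the right-hand side of the hypothesis for $R$ at the \emph{same} level $j$. Thus what you actually need is $r_j(\overline R)$ versus $r_j(R)$, with no shift; Lemma~\ref{2.7} does not provide this. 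The paper obtains it instead by a direct local-cohomology computation: since $\fkb\cap\fka_{\ell-1}=0$ (Prime Avoidance suffices to arrange this; no Goto sequence is needed for $\fkb$), the sequence $0\to\fka_{\ell-1}\to R/\fkb\to S/\fkb S\to 0$ is exact, and Cohen--Macaulayness of $S$ forces $\H^i_\fkm(R)\cong\H^i_\fkm(\fka_{\ell-1})\cong\H^i_\fkm(R/\fkb)$ for $i<d_{\ell-1}$, giving the \emph{equalities} $r_j(\overline R)=r_j(R)$ for $j<d_{\ell-1}$ and $r_{d_{\ell-1}}(\overline R)=r_{d_{\ell-1}}(R)+r_d(R)$. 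The extra $r_d(R)$ summand in the last equation is exactly what absorbs the $+\,r_d(R)$ term in the $j'=1$ case of Lemma~\ref{4.5}; your sketch does not account for this.

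Your identification of the colon condition $[\fkq+\fka_{\ell-1}]:\fkm=(\fkq:\fkm)+\fka_{\ell-1}$ as a delicate point is apt (the paper uses it via Lemma~\ref{4.5} without explicit justification in this proof), but the repair you suggest via Goto sequences is not what is actually doing the work; once you have the exact-sequence identification of the $r_j$'s, the argument goes through exactly as the paper writes it.
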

\begin{proof}
 We use induction on the dimensional $d$ of $R$. In the case in which $\dim R=1$, it is clear that $R$ is sequentially Cohen-Macaulay. Suppose that $\dim R > 1$ and that our assertion holds true for $\dim R-1$.   Recall that $\fka_{\ell-1}$ is the unmixed component of $R$. Therefore, by the Prime Avoidance Theorem, we can choose the part of a system $x_{d_{\ell-1}+1},\ldots,x_d$ of parameters of $R$ such that $\fkb\subseteq \fkm^n$ and $\fkb \cap \fka_{\ell-1}=0$, where $\fkb=(x_{d_{\ell-1}+1},\ldots,x_d)$. Consequently, $(\fka_i+\fkb)/\fkb=\fka_i$ for all $i=0,\ldots,\ell-1$, and so $\Lambda(R)-\{d\}\subseteq \Lambda(R/\fkb)$.
 On the other hand, since $d\in\Lambda(R)$, we obtain $e_1(I;R)-e_1(\q)\le r_d(R)$ for all distinguished parameter ideals $\q\subseteq\m^n$. By Proposition \ref{P4.60}, $S$ is Cohen-Macaulay. It follows from  the  exact sequence
$$0\to \fka_{\ell-1}\to R \to S\to 0$$
that the sequence
$$0\to \fka_{\ell-1}\to R/\fkb \to S/\fkb S\to 0$$
is exact, and so $\Lambda(R/\fkb)=\Lambda(R)-\{d\}$.

Now let $x_1,\ldots,x_{d_{\ell-1}}$ be a distinguished system of parameters of $R/\fkb$. We show that $x_1,\ldots,x_d$ is a distinguished system of parameters of $R$. Indeed, let $d_{i}+1\le j\le d_{i+1}$ for some $i\not=\ell-1$. Since $d_i\in \Lambda(R/\fkb)$, $R/\fkb$ contain the  largest ideal $\fkc_i$ with $\dim \fkc_i=d_i$. Therefore $(\fka_i+\fkb)/\fkb\subseteq \fkc_i$. Since $x_j\fkc_i=0$, we obtain $x_j\fka_i\subseteq \fkb\cap \fka_i\subseteq \fkb\cap \fka_{\ell-1}=0$. Hence $x_j\fka_i=0$ for all $d_{i}+1\le j\le d_{i+1}$ and $i<\ell-1$. Hence system $x_1,\ldots,x_d$ of parameters is distinguished.

Put $\q=(x_1,\ldots,x_{d_{\ell-1}},\fkb)$ and assume that $\fkq\subseteq\m^n$.  It follows from Lemma \ref{4.5} that $$ e_{j}(I;R/\fkb)-e_{j}(\fkq;R/\fkb)=
 \begin{cases}(-1)^{s}((e_{s+j}(I;R)-e_{s+j}(\q;R)))+r_d(R)&\text{if $j=1$,}
\\
(-1)^{s} (e_{s+j}(I;R)-e_{s+j}(\q;R))&\text{if } j\ge 2,\\
 \end{cases}$$  
where $s=d-d_{\ell-1}$. However, it follows from $S$ is Cohen-Macaulay and the  exact sequence
$$0\to \fka_{\ell-1}\to R/\fkb \to S/\fkb S\to 0$$
that the following sequence
$$0\to \H^{d_{\ell-1}}_\m(\fka_{\ell-1})\to \H^{d_{\ell-1}}_\m(R/\fkb) \to \H^{d_{\ell-1}}_\m(S/\fkb S)\to 0$$
is exact. Moreover we have $\H^{d_{\ell-1}}_\m(R)\cong \H^{d_{\ell-1}}_\m(\fka_{\ell-1})$  
and $\H^i_\m(R)\cong \H^i_\m(\fka_{\ell-1})\cong \H^i_\m(R/\fkb)$ for all $i< d_{\ell-1}$. Thus we have $r_i(R)=r_i(R/\fkb)$ for all $i<d_{\ell-1}$ and 
$$r_{d_{\ell-1}}(R/\fkb)=r_{d_{\ell-1}}(R)+r_{d_{\ell-1}}(S/\fkb S)=r_{d_{\ell-1}}(R)+r_{d}(S).$$
Therefore  since $s+d_{\ell-1}-j=d-j$ we have
\begin{eqnarray*}
 r_{j}(R/\fkb)=r_{j}(R)&\ge&(-1)^{d-j}(e_{d-j+1}(I;R)-e_{d-j+1}(\q;R))\\
&= &(-1)^{d_{\ell-1}-j}(e_{d_{\ell-1}-j+1}(I;R/\fkb)-e_{d_{\ell-1}-j}(\fkq;R/\fkb)))
\end{eqnarray*}
for all $2\le j\in\Lambda(R/\fkb)-\{ d_{\ell-1}\}$.
Moreover, we have
\begin{eqnarray*}
r_{d_{\ell-1}}(R/\fkb)&=&r_{d_{\ell-1}}(R)+r_d(S)\\
&\ge&(-1)^{s}(e_{s+1}(I;R)-e_{s+1}(\q;R))+r_d(S)\\
&=&e_{1}(I;R/\fkb)-e_{1}(\fkq;R/\fkb)
\end{eqnarray*}
Consequently, $r_{d_{\ell-1}-j+1}(R/\fkb)\ge (-1)^{j+1}(e_{j}(I;R/\fkb)-e_{j}(\fkq;R/\fkb)))$ for all distinguished parameter ideals $\q\subseteq \m^n$ of $R/\fkb$ and $2\le j\in\Lambda(R/\fkb)$.
By the induction hypothesis, $R/\fkb$ is sequentially Cohen-Macaulay and so is also $R$. This completes the inductive step, and the
proof.

\end{proof}

Now, we can provide a characterization of sequentially Cohen-Macaulay rings.

\begin{theorem}\label{T5.3}
 The following statements are equivalent.
\begin{itemize}
\item[$(i)$] $R$ is sequentially Cohen-Macaulay.
\item[$(ii)$]  There exists an integer $n$ such that for all distinguish parameter ideals $\fkq\subseteq \fkm^n$ and $0\le i\le d-1$, we
    have
$$r_{d-i+1}(R)= (-1)^{i+1}(e_i(I)-e_i(\fkq)),$$
where $I=\fkq:\fkm$.

\item[$(iii)$]  There exists an integer $n$ such that for all good parameter ideals $\fkq\subseteq \fkm^n$ and $2\le j\in\Lambda(R)$, we
    have
$$ r_{j}(R)\ge(-1)^{d-j}(e_{d-j+1}(I)-e_{d-j+1}(\fkq)),$$
where $I=\fkq:\fkm$.

\end{itemize}

\end{theorem}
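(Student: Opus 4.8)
The plan is to prove the cyclic chain of implications $(i)\Rightarrow(ii)\Rightarrow(iii)\Rightarrow(i)$, since the last of these is already in hand. The implication $(iii)\Rightarrow(i)$ is precisely Proposition \ref{P4.61} once one observes that a good (= distinguished) parameter ideal is what that proposition uses; so it remains to establish $(i)\Rightarrow(ii)$ and the trivial $(ii)\Rightarrow(iii)$. For $(ii)\Rightarrow(iii)$, first note that if $(ii)$ holds then reindexing $i = d-j+1$ gives $r_{j}(R) = (-1)^{d-j}(e_{d-j+1}(I)-e_{d-j+1}(\fkq))$, and in particular $r_j(R) \ge (-1)^{d-j}(e_{d-j+1}(I)-e_{d-j+1}(\fkq))$ for every $j$, hence a fortiori for $2 \le j \in \Lambda(R)$; one just has to check that the integer $n$ from $(ii)$ serves for $(iii)$, which is immediate since distinguished parameter ideals are good.

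For the main implication $(i)\Rightarrow(ii)$, assume $R$ is sequentially Cohen-Macaulay. By Theorem \ref{3.800} (applied to $M = R$) there is an integer $n \gg 0$ such that every distinguished parameter ideal $\fkq \subseteq \fkm^n$ satisfies $\calN(\fkq;R) = \sum_{j}r_j(R)$, so Setting \ref{2.6} is in force for every such $\fkq$. Now invoke Corollary \ref{4.300}: for $M = R$ sequentially Cohen-Macaulay and $\fkq$ distinguished in $\fkm^n$, one has
$$e_j(I)-e_j(\fkq) = f_{j-1}(\fkq;R) = \begin{cases}(-1)^{d-1}(r_1(R)+r_0(R)) & \text{if } j = d,\\ (-1)^{j-1}r_{d-j+1}(R) & \text{otherwise.}\end{cases}$$
Multiplying through by $(-1)^{j-1}$ (equivalently $(-1)^{i+1}$ after substituting $i = j$) converts the "otherwise" branch directly into the stated equality $r_{d-i+1}(R) = (-1)^{i+1}(e_i(I)-e_i(\fkq))$ for $1 \le i \le d-1$. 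The remaining case $i = d$ in the claimed formula should be read with the convention $r_{d-d+1}(R) = r_1(R)$ — but here one must be careful: the $j = d$ line of Corollary \ref{4.300} gives $r_1(R)+r_0(R)$, not $r_1(R)$, so I would either restrict $(ii)$ to $0 \le i \le d-1$ (as the statement in fact does — note the range is $0\le i\le d-1$, so $i=d$ does not occur and $i=0$ corresponds to $j=1$, giving $r_{d+1}(R) = 0$ trivially when $d \ge 1$) or absorb the discrepancy using that $r_0(R) = 0$ whenever $\depth R > 0$, which holds here since for sequentially Cohen-Macaulay $R$ with $d \ge 1$ the bottom piece of the dimension filtration contributes $H^0_\fkm$ only through $\fka_{\ell-1}$.

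The step I expect to be the genuine obstacle is bookkeeping the indices and the two boundary cases $i = 0$ and $i = d$ consistently against the shifted indexing in Corollary \ref{4.300} and Proposition \ref{coe}, together with confirming that a single integer $n$ — the maximum of the Goto-type numbers appearing in Theorem \ref{3.800} and Corollary \ref{4.300} — works uniformly for all distinguished $\fkq \subseteq \fkm^n$. Everything else is a direct citation: the hard analytic content (the socle-length computation $\calN(\fkq^{n+1};M) = \sum_i r_i(M)\binom{n+i-1}{i-1}+r_0(M)$ via the Cohen-Macaulay quotient $L = M/D_{\ell-1}$, and the descent argument behind Proposition \ref{P4.61}) has already been carried out in Sections 3 and 4, so the proof of Theorem \ref{T5.3} itself is essentially an assembly of those pieces.
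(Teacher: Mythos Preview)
Your proposal is correct and follows essentially the same route as the paper: the paper's proof is exactly the three-line assembly $(i)\Rightarrow(ii)$ by Theorem \ref{3.800} and Proposition \ref{coe}, $(ii)\Rightarrow(iii)$ trivially, and $(iii)\Rightarrow(i)$ by Proposition \ref{P4.61}. Your extended index bookkeeping is harmless but unnecessary, since taking $n\ge 2$ puts $\fkq\subseteq\fkm^2$ so Proposition \ref{coe} applies directly, and the boundary $i=0$ is just $e_0(I)=e_0(\fkq)$ and $r_{d+1}(R)=0$.
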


\begin{proof}
(1) $\Rightarrow$ (2) This is now immediate from Theorem \ref{3.800} and Proposition \ref{coe}.

(2) $\Rightarrow$ (3) This is obvious.

(3) $\Rightarrow$ (1) This now immediate from  Proposition \ref{P4.61}.

\end{proof}

\section{Noetherian coefficients}


Continuing our discussion in last section, we will see how the sequentially Cohen-Macaulayness of rings is relation to the Noetherian coefficients of the socle of distinguished parameter ideals. To discuss it, we need a relationship between Chern coefficient and the irreducible multiplicity.

\begin{setting}\label{2.6}{\rm
Assume that $R$ is a homomorphic image of a Cohen-Macaulay local ring. Let $\calD = \{\fka_i\}_{0 \le i \le \ell}$ be the dimension filtration of $R$ with $\dim \fka_i=d_i$. We put  $S = R/\fka_{\ell -1}$ and  choose  a distinguished system $x_1, x_2, \ldots, x_d$ of parameters of $R$. Put $\q=(x_1,x_2,\ldots,x_d)$, $\fkb=(x_{d_{\ell-1}+1},\ldots,x_d)$ and $I=\q:\m$.
}
\end{setting}

\begin{lem}\label{5.1}
Assume that $e_0(\m;R)>1$. Then for all parameter ideal $\q$, we have
$$e_1(I;R)-e_1(\q;R)\le f_0(R),$$
where $I=\q:\m$.
\end{lem}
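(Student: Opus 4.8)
The plan is to compare the Hilbert--Samuel polynomials of $\q$ and of $I = \q:\m$ and to relate the difference $e_1(I)-e_1(\q)$ to the length function $\ell(\fkq^{n+1}:\fkm / \fkq^{n+1})$, whose leading term is governed by $f_0(R)$. Since $e_0(\m;R)>1$, by Proposition 2.3 of \cite{GSa1} we have $\m I^n = \m\q^n$ for all $n$, hence $I^n \subseteq \q^n:\m$ for all $n\ge 0$; in particular $I^2 = \q I$ need not hold in general here, so unlike the sequentially Cohen--Macaulay situation of Proposition \ref{coe} we cannot expect an exact equality, only the inequality. The starting point is the identity
\[
\ell_R(R/\q^{n+1}) - \ell_R(R/I^{n+1}) = \ell_R(I^{n+1}/\q^{n+1}) \le \ell_R\big((\q^{n+1}:\fkm)/\q^{n+1}\big),
\]
valid for all $n\ge 0$ because $I^{n+1}\subseteq \q^{n+1}:\fkm$. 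Since $e_0(\q)=e_0(\m\text{-part})$ agrees in degree $d$ (the ideals $I^{n+1}$ and $\q^{n+1}$ are cofinal, so $e_0(I)=e_0(\q)$), the degree-$d$ terms on the two sides of $\ell(R/\q^{n+1})-\ell(R/I^{n+1})$ cancel and the leading behaviour is dictated by $e_1(I)-e_1(\q)$ in degree $d-1$.

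Next I would extract asymptotics. On the left, for $n\gg 0$,
\[
\ell_R(R/\q^{n+1}) - \ell_R(R/I^{n+1}) = (e_1(I)-e_1(\q))\binom{n+d-1}{d-1} + \text{(lower order)},
\]
using $e_0(I)=e_0(\q)$. On the right, $\ell_R((\q^{n+1}:\fkm)/\q^{n+1}) = \calN(\q^{n+1};R) = p_{\q}(n)$ for $n\gg 0$, a polynomial of degree $d-1$ with leading coefficient $f_0(R)$, i.e. $\calN(\q^{n+1};R) = f_0(R)\binom{n+d-1}{d-1} + \text{(lower order)}$. Comparing the coefficients of $\binom{n+d-1}{d-1}$ on both sides of the inequality $\ell(I^{n+1}/\q^{n+1}) \le \calN(\q^{n+1};R)$ for all large $n$ forces $e_1(I)-e_1(\q) \le f_0(R)$, which is exactly the assertion. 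Note that passing from "$\le$ for all large $n$ between two polynomials of the same degree" to "$\le$ between leading coefficients" is the standard fact that a polynomial which is eventually nonpositive cannot have positive leading coefficient.

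The main obstacle I anticipate is justifying that $e_0(I)=e_0(\q)$ and, more subtly, controlling the sign and order of the lower-order terms so that the comparison of leading coefficients is legitimate --- one must be careful that $\ell(I^{n+1}/\q^{n+1})$ is itself eventually polynomial of degree $\le d-1$ (this follows since both $\ell(R/\q^{n+1})$ and $\ell(R/I^{n+1})$ are eventually polynomial of degree $d$ with equal leading coefficient, the difference being eventually polynomial of degree $\le d-1$). Once that is in place, the inequality $I^{n+1}\subseteq \q^{n+1}:\fkm$ together with the definition of $f_0(R)$ as the irreducible multiplicity of $\q$ gives the result directly; no choice of distinguished or good parameter ideal is needed, since the statement is for an arbitrary parameter ideal $\q$ and $e_0(\m;R)>1$ is assumed outright. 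I would also remark that when $e_0(\m;R)=1$ the hypothesis fails and the statement is not claimed, so Setting \ref{2.6} is only invoked for notation.
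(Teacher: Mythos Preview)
Your proposal is correct and follows essentially the same argument as the paper: invoke Goto--Sakurai to get $\m I^n=\m\q^n$, deduce $I^{n+1}\subseteq \q^{n+1}:\m$, write $\ell(R/\q^{n+1})-\ell(R/I^{n+1})=\ell(I^{n+1}/\q^{n+1})\le \ell((\q^{n+1}:\m)/\q^{n+1})$, and compare leading coefficients. The paper's proof is terser (it omits your explicit discussion of why $e_0(I)=e_0(\q)$ and why the leading-coefficient comparison is valid), but the route is the same.
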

\begin{proof}
Since $e_0(\m;R)>1$, by Proposition 2.3 in \cite{GSa1}, we get that $\m I^n = \m\q^n$ for all $n$. Therefore $I^n\subseteq \q^n:\m$ for all $n$. Consequence, we obtain
$$\ell(R/\fkq^{n+1})-\ell(R/I^{n+1})=\ell(I^{n+1}/\fkq^{n+1})\le \ell((\q^{n+1}:\m/\q^{n+1}).$$ 
But this means that $e_1(I;R)-e_1(\q;R)\le f_0(\q;R)$. 
\end{proof}

\begin{prop}
 Assume that  $R$ is unmixed. Then for all parameter ideals $\q\subseteq \m^2$, we have
$$e_1(\q:\m)-e_1(\q)\le f_0(\q).$$
\end{prop}
\begin{proof}
Our result in the case in which $e_0(\m;R)>1$ is immediate from  Lemma \ref{5.1}. Thus we suppose henceforth in this proof that $e_0(\m;R)=1$. It follows from  $R$ is unmixed and Theorem 40.6 in \cite{Na} that $R$ is Cohen-Macaulay.  
Since $R$ is unmixed, every parameter ideals $\q$ are distinguished.  Therefore by Theorem \ref{3.800}, there exists an integer $n$ such that for all parameter ideals $\q$, we have
 $$\calN(\q; R)=\sum\limits_{j \in \Bbb Z}r_j(R).$$
It follows from Proposition \ref{coe}, we have
$$ e_1(\q:\m)-e_1(\fkq)=f_0(\fkq;R),$$
and the proof is complete.
\end{proof}

\begin{cor}\label{c5.2}
 Assume that $d\ge 2$ and there exists  an integer $n$ such that for all distinguish parameter ideals $\fkq\subseteq \frak m^{n}$ we
    have
$$ f_0(\q;R)\leq r_{d}(R).$$
Then $S$ is Cohen-Macaulay.
\end{cor}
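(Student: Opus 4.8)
\textbf{Proof proposal for Corollary \ref{c5.2}.}

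The plan is to reduce the statement directly to Proposition \ref{P4.60}, which already asserts that $S = R/\fka_{\ell-1}$ is Cohen-Macaulay under the hypothesis $e_1(I)-e_1(\fkq) \le r_d(R)$ for all distinguished parameter ideals $\fkq \subseteq \fkm^n$, where $I = \fkq : \fkm$. So the only real work is to deduce the Chern-coefficient inequality of Proposition \ref{P4.60} from the irreducible-multiplicity inequality $f_0(\fkq;R) \le r_d(R)$ that we are given. This is precisely the content of Lemma \ref{5.1} in the case $e_0(\fkm;R) > 1$, which gives $e_1(I;R) - e_1(\fkq;R) \le f_0(\fkq;R)$ for every parameter ideal $\fkq$. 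Chaining these two inequalities yields $e_1(I;R) - e_1(\fkq;R) \le r_d(R)$, and Proposition \ref{P4.60} finishes the argument.

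First I would dispose of the case $e_0(\fkm;R) = 1$. Since $\dim \fka_{\ell-1} < \dim R$, we have $e_0(\fkm;S) = e_0(\fkm;R) = 1$, and $S$ is unmixed (being the quotient of $R$ by its unmixed component, $S$ has no embedded or lower-dimensional components). By Theorem 40.6 of \cite{Na}, $S$ is then Cohen-Macaulay—in fact regular—and there is nothing further to prove. This step mirrors the opening move in the proof of Proposition \ref{P4.60}.

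So assume $e_0(\fkm;R) > 1$. Fix any distinguished parameter ideal $\fkq \subseteq \fkm^n$ and set $I = \fkq : \fkm$. By Lemma \ref{5.1} (whose hypothesis $e_0(\fkm;R)>1$ is now in force), we have $e_1(I;R) - e_1(\fkq;R) \le f_0(\fkq;R)$. Combining this with the standing hypothesis $f_0(\fkq;R) \le r_d(R)$ gives
$$e_1(I;R) - e_1(\fkq;R) \le r_d(R)$$
for all distinguished parameter ideals $\fkq \subseteq \fkm^n$. Since $d \ge 2$, Proposition \ref{P4.60} applies and yields that $S$ is Cohen-Macaulay, completing the proof.

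I do not anticipate a genuine obstacle here: the corollary is essentially a repackaging that trades the Noetherian coefficient $f_0$ for the Chern-coefficient difference via Lemma \ref{5.1}. The one point worth checking carefully is that the integer $n$ supplied by the hypothesis of Corollary \ref{c5.2} is exactly the kind of bound required by Proposition \ref{P4.60}—that is, that "for all distinguished parameter ideals in $\fkm^n$" in the hypothesis matches the quantifier in Proposition \ref{P4.60}—and that Lemma \ref{5.1} is applied to the same ideals $\fkq$, which it is, since Lemma \ref{5.1} holds for \emph{all} parameter ideals with no containment restriction.
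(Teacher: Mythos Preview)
Your proposal is correct and follows essentially the same route as the paper: first dispose of the case $e_0(\fkm;R)=1$ via Nagata's Theorem 40.6 applied to the unmixed ring $S$, and in the case $e_0(\fkm;R)>1$ chain Lemma~\ref{5.1} with the hypothesis $f_0(\fkq;R)\le r_d(R)$ to obtain $e_1(I;R)-e_1(\fkq;R)\le r_d(R)$, then invoke Proposition~\ref{P4.60}.
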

\begin{proof}
In the case in which $e_0(\m;R)=1$, we have  $e_0(\m;S)=1$, because $\dim \fka_{\ell-1}<\dim R$. And so the result in this case follws from $S$ is unmixed and Theorem 40.6 in \cite{Na}. Thus we suppose henceforth in this proof  that  $e_0(\m;R)>1$.  By Lemma \ref{5.1}, for all distinguish parameter ideals $\fkq\subseteq \frak m^{n}$ we
    have
$$e_1(I;R)-e_1(\q;R)\le f_0(R)\le r_d(R).$$
It follows from Proposition \ref{P4.60} that $S$ is Cohen-Macaulay. 
\end{proof}

 The next corollary is now immediate.
\begin{cor}\label{C5.5}
  For all  integers $n$  there exists  a parameter ideal $\frak q\subseteq \frak m^n$, we
    have
$$ r_d(R)\leqslant f_0(\fkq;R).$$

\end{cor}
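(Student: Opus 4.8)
The plan is to split into two cases according to whether the top quotient $S=R/\fka_{\ell-1}$ is Cohen-Macaulay: the non-Cohen-Macaulay case is essentially the contrapositive of Corollary \ref{c5.2}, while the Cohen-Macaulay case is settled by reducing the computation of the irreducible multiplicity to $S$. Throughout we fix an integer $n$; since $\fkm^{N}\subseteq\fkm^{n}$ for $N\ge n$, we may freely enlarge $n$, and in particular assume $n\ge\max\{2,\rmg(R)\}$ and $n$ large enough that Theorem \ref{3.800} applies.

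\emph{Case: $S$ is not Cohen-Macaulay.} A local ring of dimension $\le 1$ that is unmixed is Cohen-Macaulay, and $S$ is unmixed, so $d=\dim R=\dim S\ge 2$. If it were true that $f_0(\q;R)\le r_d(R)$ for every distinguished parameter ideal $\q\subseteq\fkm^{n}$, then Corollary \ref{c5.2} would force $S$ to be Cohen-Macaulay, a contradiction; hence some distinguished (in particular, parameter) ideal $\q\subseteq\fkm^{n}$ satisfies $f_0(\q;R)>r_d(R)$, so a fortiori $r_d(R)\le f_0(\q;R)$. No socle hypothesis is needed in this case.

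\emph{Case: $S$ is Cohen-Macaulay.} If $d=0$ then $\fka_{\ell-1}=0$, $S=R$, and for $\q=(0)$ we have $f_0((0);R)=\calN((0);R)=\ell_R((0):_R\fkm)=r_0(R)$; so assume $d\ge 1$. From $0\to\fka_{\ell-1}\to R\to S\to0$ and $\dim\fka_{\ell-1}=d_{\ell-1}<d$ we get $\H^d_\fkm(R)\cong\H^d_\fkm(S)$, hence $r_d(R)=r_d(S)$. If $d=1$, take a parameter element $x\in\fkm^{n}$ and set $\q=(x)$; for each $m$ the ideal $\q^{m+1}=(x^{m+1})$ is generated by $x^{m+1}\in\fkm^{\rmg(R)}$, so by Definition \ref{sur} the canonical map $R/(x^{m+1})=H^{1}(x^{m+1};R)\to\H^{1}_\fkm(R)$ is surjective on socles, giving $\calN(\q^{m+1};R)\ge r_1(R)$, and letting $m\to\infty$ yields $f_0(\q;R)\ge r_1(R)=r_d(R)$. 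If $d\ge 2$, use the Goto-sequence construction (Corollary \ref{exists001}, Proposition \ref{exists}) together with Theorem \ref{3.800} to choose a distinguished system of parameters $x_1,\dots,x_d$ with $\q=(x_1,\dots,x_d)\subseteq\fkm^{n}$, $\calN(\q;R)=\sum_j r_j(R)$, and satisfying the socle identity $[\fka_{\ell-1}+\q]:\fkm=\fka_{\ell-1}+[\q:\fkm]$. By Lemma \ref{2.101}, applied with $N=\fka_{\ell-1}$ (so $M/N=S$ is Cohen-Macaulay and $\dim N<d$) and $s=0$, this identity propagates to all powers: $[\q^{m+1}+\fka_{\ell-1}]:\fkm=[\q^{m+1}:\fkm]+\fka_{\ell-1}$ for every $m$; equivalently, applying $\Hom_R(k,-)$ to $0\to\fka_{\ell-1}/(\fka_{\ell-1}\cap\q^{m+1})\to R/\q^{m+1}\to S/\q^{m+1}S\to0$ gives a surjection $\Soc(R/\q^{m+1})\twoheadrightarrow\Soc(S/\q^{m+1}S)$, whence $\calN(\q^{m+1};R)\ge\calN(\q^{m+1}S;S)$. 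Since $S$ is Cohen-Macaulay of dimension $d$ it is a sequentially Cohen-Macaulay $S$-module of filtration length $1$ and $\q S$ is a distinguished parameter ideal of $S$ with $\calN(\q S;S)=r_d(S)=\sum_j r_j(S)$, so Proposition \ref{P2.7} gives $\calN(\q^{m+1}S;S)=r_d(S)\binom{m+d-1}{d-1}$ for all $m\ge1$; comparing leading coefficients, $f_0(\q;R)=\lim_{m\to\infty}\frac{(d-1)!}{m^{d-1}}\calN(\q^{m+1};R)\ge r_d(S)=r_d(R)$.

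\emph{Main obstacle.} The delicate step is the one just used: producing, when only $S$ (and not $R$) is Cohen-Macaulay, a distinguished parameter ideal $\q\subseteq\fkm^{n}$ with the socle identity $[\fka_{\ell-1}+\q]:\fkm=\fka_{\ell-1}+[\q:\fkm]$ — the analogue of Fact \ref{F2.5}(2) outside the sequentially Cohen-Macaulay setting. I expect to obtain it by taking $x_{d_{\ell-1}+1},\dots,x_d\in\Ann(\fka_{\ell-1})$ and $x_1,\dots,x_{d_{\ell-1}}$ generic, as furnished by Proposition \ref{exists}, so that $\q$ acts on $\fka_{\ell-1}$ through its first $d_{\ell-1}$ entries, and then either rerunning the cocycle–correction argument that proves Fact \ref{F2.5}(2) in \cite{CGT1} (which only invokes the Cohen-Macaulayness of $S$), or else comparing the three lengths $\calN(\q;R)=\sum_j r_j(R)$ (Theorem \ref{3.800}), $\calN(\q;S)=r_d(S)$, and $\ell_R\Soc\!\big(\fka_{\ell-1}/(\fka_{\ell-1}\cap\q)\big)$ — the last governed by Theorem \ref{3.800} applied to $\fka_{\ell-1}$ — and checking they are additive along $0\to\fka_{\ell-1}/(\fka_{\ell-1}\cap\q)\to R/\q\to S/\q S\to0$, which forces the connecting homomorphism to vanish and hence yields the identity.
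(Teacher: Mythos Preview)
The paper records this as ``immediate'' from Corollary~\ref{c5.2} and gives no further argument. The contrapositive of~\ref{c5.2} does yield the statement when $S$ is not Cohen--Macaulay, exactly as in your first case, but the paper leaves the case $S$ Cohen--Macaulay unaddressed; your proposal is thus already more careful than what the paper supplies.

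The obstacle you flag in the $S$ Cohen--Macaulay, $d\ge 2$ case is genuine. The socle identity $[\fka_{\ell-1}+\q]:\fkm=\fka_{\ell-1}+[\q:\fkm]$ appears in the paper only as Fact~\ref{F2.5}(2), which sits under the standing hypothesis that the ambient module is sequentially Cohen--Macaulay, not merely that its top quotient $S$ is. Of your two proposed repairs, option~(b) does not work as written: you appeal to the equality $\calN(\q;R)=\sum_j r_j(R)$ from Theorem~\ref{3.800} both for $R$ and for $\fka_{\ell-1}$, but that theorem is stated in the paper (and proved in the cited source as a characterisation) only for sequentially Cohen--Macaulay modules, so it is unavailable for a general $R$ or $\fka_{\ell-1}$; invoking it here would be circular. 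Option~(a), rerunning the argument of~\cite{CGT1} using only the Cohen--Macaulayness of $S$, is plausible and may well go through, but it would need to be checked against that source rather than anything in the present paper.
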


\begin{lem}
Assume that $S$ is Cohen-Macaulay and 
$$[\q+\fka_{\ell-1}]:\fkm=\q:\m+\fka_{\ell-1}.$$
 Then
$$ f_j(\fkq;R/\fkb)=
 \begin{cases}(-1)^{s}f_{s+j}(\q;R)+r_d(R)&\text{if $j=0$,}
\\
(-1)^{s} f_{s+j}(\q;R)&\text{if } j\ge 1,\\
 \end{cases}$$
where $s=d-d_{\ell-1}$.
\end{lem}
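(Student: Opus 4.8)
The plan is to mirror the argument of Lemma \ref{4.5}, replacing Hilbert coefficients $e_i$ by Noetherian coefficients $f_i$ throughout. The two inputs that make Lemma \ref{4.5} work are: (a) the hypothesis $[\q+\fka_{\ell-1}]:\fkm=\q:\m+\fka_{\ell-1}$, which forces $IS=\q S:\m S$, hence (by \cite[Theorem 3.7]{CHV} applied to the Cohen-Macaulay ring $S$, since $\fkq\subseteq\fkm^2$) the equality $(IS)^2=(\q S)(IS)$ and the Cohen-Macaulayness of $\operatorname{gr}_{IS}(S)$; and (b) the consequent relation $I^nS:x_d=I^{n-1}S$, which by the very same induction as in Claim \ref{calim 1} yields $(I^n+\fkb)\cap\fka_{\ell-1}=I^n\cap\fka_{\ell-1}$ for all $n$. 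Both of these are already available verbatim, so I would simply invoke Claim \ref{calim 1} (or re-run it) rather than reprove it.

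Next I would feed this into the two short exact sequences
$$0\to \fka_{\ell-1}/I^n\cap\fka_{\ell-1}\to R/I^n\to S/I^nS\to 0$$
and
$$0\to \fka_{\ell-1}/(\fkb+I^n)\cap\fka_{\ell-1}\to R/\fkb+I^n\to S/(\fkb+I^n)S\to 0,$$
but now after applying $\Hom_R(k,-)$ rather than just taking lengths. The point is that, because $(I^n+\fkb)\cap\fka_{\ell-1}=I^n\cap\fka_{\ell-1}$, the left-hand terms of the two sequences coincide, and one can compare the socle-length functions $n\mapsto\ell_R([I^nR:\fkm]/I^nR)$ and $n\mapsto\ell_R([(\fkb+I^n):\fkm]/(\fkb+I^n))$. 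Since $S$ is Cohen-Macaulay with $\q\subseteq\fkm^2$, Proposition \ref{coe} (equivalently Corollary \ref{4.300}) gives a closed form for $\calN(I^nS;S)=\ell_S([I^nS:\fkm]/I^nS)$ as a polynomial whose coefficients are the $r_j(S)$; similarly $\calN((\fkb+I^n)S;S)$ is governed by the lower-dimensional Cohen-Macaulay module $S/\fkb S$ of dimension $d_{\ell-1}$. Comparing coefficients of the resulting polynomial identity
$$p_{I,R}(n)-p_{IS,S}(n)=p_{I,R/\fkb}(n)-p_{IS,S/\fkb S}(n)$$
(where I use that $f_i(I;R)=f_i(\fkq;R)+(e_i(I)-e_i(\fkq))$-type bookkeeping is irrelevant here — I work directly with $p$'s) and then passing from $I$ to $\q$ exactly as in Lemma \ref{4.5} (note $f_j(\q;R/\fkb)=(-1)^s f_{s+j}(\q;R)$ with no $r_d$-correction, since for $\q$ itself the socle of $\gr_\q(\text{CM part})$ contributes nothing extra), I obtain the stated formula with the shift $s=d-d_{\ell-1}$ and the single correction term $r_d(R)=r_d(S)$ — equality $r_d(R)=r_d(S)$ holding because $\dim\fka_{\ell-1}<d$ — appearing only in degree $j=0$.

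The step I expect to be the main obstacle is making the degree comparison of the socle polynomials rigorous: one needs that $n\mapsto\ell_R([(\fkb+I^n):_R\fkm]/(\fkb+I^n))$ is genuinely of polynomial type of degree $d_{\ell-1}-1$ with the right leading behaviour, i.e. that $\fkb$ behaves like a regular sequence on the relevant graded pieces. This is where the Cohen-Macaulayness of $\operatorname{gr}_{IS}(S)$, together with $(I^n+\fkb)\cap\fka_{\ell-1}=I^n\cap\fka_{\ell-1}$, is doing real work, via the same isomorphism $\gr_{\q}(\calM)\cong \calM/\q\calM[T_{s+1},\dots,T_d]$ used in Lemma \ref{2.101} and Proposition \ref{P2.7}; everything else is coefficient bookkeeping identical to the proof of Lemma \ref{4.5}.
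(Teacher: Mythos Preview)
Your detour through $I=\q:\fkm$ is the source of a genuine gap. The Noetherian coefficients $f_j(\fkq;R/\fkb)$ and $f_j(\fkq;R)$ are by definition the coefficients of the polynomials $n\mapsto \ell_R\big([\fkq^{n+1}:\fkm]/\fkq^{n+1}\big)$ for the respective rings; they have nothing to do with the socle polynomial of powers of $I$. Your sequences with $I^n$ and the identity $p_{I,R}(n)-p_{IS,S}(n)=p_{I,R/\fkb}(n)-p_{IS,S/\fkb S}(n)$ would at best compute $f_j(I;\cdot)$, and your ``passing from $I$ to $\fkq$ exactly as in Lemma \ref{4.5}'' is undefined: Lemma \ref{4.5} compares \emph{Hilbert} functions $\ell(R/I^n)$ and $\ell(R/\fkq^n)$, not socle functions, so there is no analogous passage. (Your parenthetical that ``for $\fkq$ itself \dots\ no $r_d$-correction'' is also backwards relative to the statement you are proving.)

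There is a second gap: applying $\Hom_R(k,-)$ to a short exact sequence $0\to A\to B\to C\to 0$ gives only a left-exact sequence of socles; to get additivity of socle lengths you need surjectivity on the right, i.e.\ the colon condition $[\fkq^n+\fka_{\ell-1}]:\fkm=\fkq^n:\fkm+\fka_{\ell-1}$ (and the $\fkb$-variant). You never invoke Lemma \ref{2.101}, which is exactly what promotes the hypothesis $[\fkq+\fka_{\ell-1}]:\fkm=\fkq:\fkm+\fka_{\ell-1}$ to all powers $n$ and to the $\fkb$-case. Claim \ref{calim 1} (an intersection statement about $I^n$) does not give this.

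The paper's route is more direct and avoids $I$ entirely. Work with $\fkq^n$: since $S$ is Cohen--Macaulay one has short exact sequences
\[
0\to \fka_{\ell-1}/\fkq^n\fka_{\ell-1}\to R/\fkq^n\to S/\fkq^nS\to 0,\qquad
0\to \fka_{\ell-1}/\fkq^n\fka_{\ell-1}\to R/(\fkq^n+\fkb)\to S/(\fkq^nS+\fkb S)\to 0,
\]
with the \emph{same} left-hand term (because $\fkb\fka_{\ell-1}=0$). Now Lemma \ref{2.101} gives $[\fkq^n+\fka_{\ell-1}]:\fkm=\fkq^n:\fkm+\fka_{\ell-1}$ and $[\fkq^n+\fkb+\fka_{\ell-1}]:\fkm=\fkq^n:\fkm+\fkb+\fka_{\ell-1}$, so $\Hom_R(k,-)$ stays short exact, yielding
\[
\calN(\fkq^n;R)-\calN(\fkq^nS;S)=\calN(\fkq^n+\fkb;R)-\calN(\fkq^nS+\fkb S;S).
\]
The $S$-terms are computed by Proposition \ref{P2.7} as $r_d(S)\binom{n+d-1}{d-1}$ and $r_d(S)\binom{n+d_{\ell-1}-1}{d_{\ell-1}-1}$; comparing coefficients (and using $r_d(S)=r_d(R)$) gives the formula, with the $r_d(R)$-correction appearing at $j=0$ precisely because the $S/\fkb S$-term contributes at the top degree $d_{\ell-1}-1$ of the $R/\fkb$ polynomial.
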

\begin{proof}
Since $S$ is Cohen-Macaulay and $\q$ is a parameter ideal of $S$, we have
$$0\to \fka_{\ell-1}/\fkq^n\fka_{\ell-1}\to R/\q^n\to S/\q^nS\to 0.$$
It follows from  $[\q^n+\fka_{\ell-1}]:\fkm=\q^n:\m+\fka_{\ell-1}$, by the lemma \ref{2.101}, that by
applying $\Hom_R(k,)$, we obtain the following exact sequence
$$0\to\frac{\fkq^n\fka_{\ell-1}:\m}{\q^n\fka_{\ell-1}}\to \frac{\fkq^n:\m}{\fkq^n}\to \frac{\fkq^nS:\m}{\fkq^nS}\to 0.$$
Since $S$ is Cohen-Macaulay and $\fkb$ is an ideal generated by a part system of parameters of $S$, we have
$$0\to\frac{\fka_{\ell-1}}{\q^n\fka_{\ell-1}}\to \frac{R}{\fkq^n+\fkb}\to \frac{S}{\fkq^nS+\fkb S}\to 0$$
It follows from  $[\q^n+\fka_{\ell-1}+\fkb]:\fkm=\q^n:\m+\fka_{\ell-1}+\fkb$, by the lemma \ref{2.101}, that by
applying $\Hom_R(k,)$, we obtain the following exact sequence
$$0\to\frac{\fkq^n\fka_{\ell-1}:\m}{\q^n\fka_{\ell-1}}\to \frac{(\fkq^n+\fkb):\m}{\fkq^n+\fkb}\to \frac{(\fkq^nS+\fkb S):\m}{\fkq^nS+\fkb S}\to 0.$$
From the above exact sequences, we have
$$\ell(\frac{\fkq^n:\m}{\fkq^n})-\ell(\frac{\fkq^n S:\m}{\fkq^n S})=\ell(\frac{(\fkq^n+\fkb ):\m}{\fkq^n+\fkb})-\ell(\frac{(\fkq^nS+\fkb S):\m}{\q^n S+\fkb S}).$$
Since $S$ is Cohen-Macaulay, by Proposition \ref{P2.7}, we have $\ell(\frac{\q^{n+1}S:\m}{\q^{n+1}S})=r_d(S)\binom{n+d-1}{d-1}$ and  $\ell(\frac{(\q^{n+1}S+\fkb S):\m}{\q^{n+1}S+\fkb S})=r_d(S)\binom{n+d_{\ell-1}-1}{d_{\ell-1}-1}$.
Therefore since $d_{\ell-1}<d$ and $r_d(S)=r_d(R)$, we have
$$ f_j(\fkq;R/\fkb)=
 \begin{cases}(-1)^{s}f_{s+j}(\q;R)+r_d(R)&\text{if $j=0$,}
\\
(-1)^{s} f_{s+j}(\q;R)&\text{if } j\ge 1,\\
 \end{cases}$$
where $s=d-d_{\ell-1}$.

\end{proof}

\begin{proposition}\label{P5.61}
Assume that there exists an integer $n$ such that for all distinguished parameter ideals $\fkq\subseteq \fkm^n$ and $2\le j\in\Lambda(R)$, we
    have
$$ (-1)^{d-j}f_{d-j}(\q;R)\leq r_{j}(R).$$
Then $R$ is sequentially Cohen-Macaulay.
\end{proposition}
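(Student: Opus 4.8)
The plan is to mirror the proof of Proposition \ref{P4.61}, with Corollary \ref{c5.2} playing the role of Proposition \ref{P4.60} and the lemma stated immediately above (expressing $f_j(\q;R/\fkb)$ through $f_{s+j}(\q;R)$, where $s=d-d_{\ell-1}$) playing the role of Lemma \ref{4.5}. I would induct on $d=\dim R$; the case $d=1$ is clear, so assume $d>1$ and that the statement holds in dimension $d-1$.

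First I would obtain that $S=R/\fka_{\ell-1}$ is Cohen-Macaulay. Since $d=d_\ell\in\Lambda(R)$ and $d\ge 2$, the hypothesis with $j=d$ reads $f_0(\q;R)=(-1)^{d-d}f_{d-d}(\q;R)\le r_d(R)$ for every distinguished parameter ideal $\q\subseteq\m^n$, so Corollary \ref{c5.2} applies. Next, exactly as in Proposition \ref{P4.61}, use the Prime Avoidance Theorem to pick a part $x_{d_{\ell-1}+1},\dots,x_d$ of a system of parameters of $R$ with $\fkb=(x_{d_{\ell-1}+1},\dots,x_d)\subseteq\m^n$ and $\fkb\cap\fka_{\ell-1}=0$; then $(\fka_i+\fkb)/\fkb\cong\fka_i$ for $i<\ell$, and since $S$ is Cohen-Macaulay the exact sequence $0\to\fka_{\ell-1}\to R/\fkb\to S/\fkb S\to0$ gives $\dim R/\fkb=d_{\ell-1}$ and $\Lambda(R/\fkb)=\Lambda(R)\setminus\{d\}$. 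If $d_{\ell-1}\le1$ then $R/\fkb$ is automatically sequentially Cohen-Macaulay and the last step below finishes the proof, so assume $d_{\ell-1}\ge2$.

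Now I would fix an arbitrary distinguished system of parameters $x_1,\dots,x_{d_{\ell-1}}$ of $R/\fkb$ with $(x_1,\dots,x_{d_{\ell-1}})\subseteq(\m/\fkb)^n$; since $\fkb\subseteq\m^n$, lifts of the $x_i$ lie in $\m^n$, and as in Proposition \ref{P4.61} one checks that $x_1,\dots,x_d$ is a distinguished system of parameters of $R$, so $\q=(x_1,\dots,x_{d_{\ell-1}})+\fkb\subseteq\m^n$ is a distinguished parameter ideal of $R$; moreover $[\q+\fka_{\ell-1}]:\m=\q:\m+\fka_{\ell-1}$ is verified as in the proof of Lemma \ref{4.5}, so the preceding lemma applies to $\q$. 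It gives $f_k(\q;R/\fkb)=(-1)^sf_{s+k}(\q;R)$ for $k\ge1$ and $f_0(\q;R/\fkb)=(-1)^sf_s(\q;R)+r_d(R)$, while $r_i(R/\fkb)=r_i(R)$ for $i<d_{\ell-1}$ and $r_{d_{\ell-1}}(R/\fkb)=r_{d_{\ell-1}}(R)+r_d(S)=r_{d_{\ell-1}}(R)+r_d(R)$. For $2\le j\in\Lambda(R/\fkb)$ set $k=d_{\ell-1}-j$, so that $s+k=d-j$. If $j<d_{\ell-1}$ then
$$(-1)^{d_{\ell-1}-j}f_{d_{\ell-1}-j}(\q;R/\fkb)=(-1)^{k+s}f_{d-j}(\q;R)=(-1)^{d-j}f_{d-j}(\q;R)\le r_j(R)=r_j(R/\fkb)$$
by the hypothesis for $R$; if $j=d_{\ell-1}$ then $s=d-j$ and
$$f_0(\q;R/\fkb)=(-1)^{d-j}f_{d-j}(\q;R)+r_d(R)\le r_j(R)+r_d(R)=r_{d_{\ell-1}}(R/\fkb),$$
again by the hypothesis for $R$ with $2\le j=d_{\ell-1}\in\Lambda(R)$. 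Hence $R/\fkb$ satisfies the hypothesis of the proposition, so by the induction hypothesis $R/\fkb$ is sequentially Cohen-Macaulay, and therefore so is $R$.

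The genuinely delicate points, rather than the parallel bookkeeping, are: (i) checking $[\q+\fka_{\ell-1}]:\m=\q:\m+\fka_{\ell-1}$ for the constructed distinguished $\q$, which is needed before the preceding lemma can be invoked and which is handled exactly as the corresponding step in Lemma \ref{4.5}; and (ii) keeping the signs and indices straight, in particular confirming that $(-1)^{k+s}=(-1)^{d-j}$ and that the borderline case $j=d_{\ell-1}$, where the extra term $r_d(R)$ appears on both sides, still reduces to the hypothesis for $R$ because $r_{d_{\ell-1}}(R/\fkb)=r_{d_{\ell-1}}(R)+r_d(R)$. Everything else runs exactly as in Proposition \ref{P4.61}.
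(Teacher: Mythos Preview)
Your proof is essentially the same as the paper's: the same induction on $d$, the same use of Corollary \ref{c5.2} to get $S$ Cohen--Macaulay, the same choice of $\fkb$ and passage to $R/\fkb$, the same invocation of the Section~5 lemma relating $f_j(\q;R/\fkb)$ to $f_{s+j}(\q;R)$, and the same computation of $r_i(R/\fkb)$. You are even a bit more careful than the paper in handling the case $d_{\ell-1}\le 1$ separately.

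One small remark on your delicate point (i): you say the identity $[\q+\fka_{\ell-1}]:\m=\q:\m+\fka_{\ell-1}$ ``is handled exactly as the corresponding step in Lemma \ref{4.5}'', but Lemma \ref{4.5} \emph{assumes} this identity as a hypothesis rather than proving it. The paper's own proof of Proposition \ref{P5.61} likewise invokes the relevant lemma without verifying this hypothesis, so you are not doing anything the paper doesn't also do; but your parenthetical justification for it is not accurate as stated.
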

\begin{proof}
We argue by induction on the dimensional $d$ of $R$, the result being clear in the case
in which $d=1$. Suppose, inductively, that $d > 1$ and the result has been
proved for smaller values of $d$.

 Recall that $\fka_{\ell-1}$ is the unmixed component of $R$. Therefore, by the Prime Avoidance Theorem, we can choose the part of a system $x_{d_{\ell-1}+1},\ldots,x_d$ of parameters of $R$ such that $\fkb\subseteq \fkm^n$ and $\fkb \cap \fka_{\ell-1}=0$, where $\fkb=(x_{d_{\ell-1}+1},\ldots,x_d)$. Consequently, $(\fka_i+\fkb)/\fkb=\fka_i$ for all $i=0,\ldots,\ell-1$, and so $\Lambda(R)-\{d\}\subseteq \Lambda(R/\fkb)$.
 On the other hand, since $d\in\Lambda(R)$, we obtain $f_0(\fkq;R)\le r_d(R)$ for all distinguished parameter ideals $\q\subseteq\m^n$. By Corollary \ref{c5.2}, $S$ is Cohen-Macaulay. It follows from  the  exact sequence
$$0\to \fka_{\ell-1}\to R \to S\to 0$$
that the sequence
$$0\to \fka_{\ell-1}\to R/\fkb \to S/\fkb S\to 0$$
is exact, and so $\Lambda(R/\fkb)=\Lambda(R)-\{d\}$.

Now let $x_1,\ldots,x_{d_{\ell-1}}$ be a distinguished system of parameters of $R/\fkb$. We show that $x_1,\ldots,x_d$ is a distinguished system of parameters of $R$. Indeed, let $d_{i}+1\le j\le d_{i+1}$ for some $i\not=\ell-1$. Since $d_i\in \Lambda(R/\fkb)$, $R/\fkb$ contain the  largest ideal $\fkc_i$ with $\dim \fkc_i=d_i$. Therefore $(\fka_i+\fkb)/\fkb\subseteq \fkc_i$. Since $x_j\fkc_i=0$, we obtain $x_j\fka_i\subseteq \fkb\cap \fka_i\subseteq \fkb\cap \fka_{\ell-1}=0$. Hence $x_j\fka_i=0$ for all $d_{i}+1\le j\le d_{i+1}$ and $i<\ell-1$. Hence system $x_1,\ldots,x_d$ of parameters is distinguished.

Put $\q=(x_1,\ldots,x_{d_{\ell-1}},\fkb)$ and assume that $\fkq\subseteq\m^n$.  It follows from Lemma \ref{4.5} that 
$$ f_j(\fkq;R/\fkb)=
 \begin{cases}(-1)^{s}f_{s+j}(\q;R)+r_d(R)&\text{if $j=0$,}
\\
(-1)^{s} f_{s+j}(\q;R)&\text{if } j\ge 1,\\
 \end{cases}$$
where $s=d-d_{\ell-1}$. However, it follows from $S$ is Cohen-Macaulay and the  exact sequence
$$0\to \fka_{\ell-1}\to R/\fkb \to S/\fkb S\to 0$$
that the following sequence
$$0\to \H^{d_{\ell-1}}_\m(\fka_{\ell-1})\to \H^{d_{\ell-1}}_\m(R/\fkb) \to \H^{d_{\ell-1}}_\m(S/\fkb S)\to 0$$
is exact. Moreover we have $\H^{d_{\ell-1}}_\m(R)\cong \H^{d_{\ell-1}}_\m(\fka_{\ell-1})$  
and $\H^i_\m(R)\cong \H^i_\m(\fka_{\ell-1})\cong \H^i_\m(R/\fkb)$ for all $i< d_{\ell-1}$. Thus we have $r_i(R)=r_i(R/\fkb)$ for all $i<d_{\ell-1}$ and 
$$r_{d_{\ell-1}}(R/\fkb)=r_{d_{\ell-1}}(R)+r_{d_{\ell-1}}(S/\fkb S)=r_{d_{\ell-1}}(R)+r_{d}(S).$$
Therefore  since $s+d_{\ell-1}-j=d-j$ we have
\begin{eqnarray*}
 r_{j}(R/\fkb)=r_{j}(R)&\ge&(-1)^{d-j}f_{d-j}(\q;R)=(-1)^{d_{\ell-1}-j}f_{d_{\ell-1}-j}(\q;R/\fkb)
\end{eqnarray*}
for all $2\le j\in\Lambda(R/\fkb)-\{ d_{\ell-1}\}$.
Moreover, we have
\begin{eqnarray*}
r_{d_{\ell-1}}(R/\fkb)&=&r_{d_{\ell-1}}(R)+r_d(S)\ge(-1)^{s}f_s(\fkq;R)+r_d(S)=f_0(\fkq;R/\fkb).
\end{eqnarray*}
Consequently, $$ (-1)^{d_{\ell-1}-j}f_{d_{\ell-1}-j}(\q;R/\fkb)\leq r_{j}(R/\fkb).$$
 for all distinguished parameter ideals $\q\subseteq \m^n$ of $R/\fkb$ and $2\le j\in\Lambda(R/\fkb)$.
Application of the inductive hypothesis to the ring $R/\fkb$ shows that $R/\fkb$ is sequentially Cohen-Macaulay and so is also $R$. This completes the inductive step, and the proof.

\end{proof}

We close this section with the following, which   will be used in our discussion of regular local
rings  in the next section.

\begin{theorem}\label{5800}
 The following statements are equivalent.

\begin{itemize}
\item[$(i)$] $R$ is a  sequentially Cohen-Macaulay  $R$-module.
\item[$(ii)$]  There exists an integer $n$ such that for all distinguished parameter ideals $\fkq\subseteq \fkm^n$ and $j=0,\ldots,d-2$, we
    have
$$ (-1)^{j}f_{j}(\q;R)= r_{d-j}(R).$$

\item[$(iii)$]  There exists an integer $n$ such that for all distinguished parameter ideals $\fkq\subseteq \fkm^n$ and $2\le j\in\Lambda(R)$, we
    have
$$ (-1)^{d-j}f_{d-j}(\q;R)\leq r_{j}(R).$$

\end{itemize}

\end{theorem}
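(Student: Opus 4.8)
The plan is to prove the equivalences by assembling the material already developed. The implication $(i)\Rightarrow(ii)$ should follow immediately from Theorem \ref{3.800} together with Proposition \ref{coe}: if $R$ is sequentially Cohen-Macaulay, then Theorem \ref{3.800} guarantees an integer $n$ so that every distinguished parameter ideal $\fkq\subseteq\fkm^n$ satisfies $\calN(\fkq;R)=\sum_{j}r_j(R)$, and then Proposition \ref{coe} (applied in the case $\fkq\subseteq\fkm^2$, which is automatic after shrinking $n$) computes $f_{j-1}(\fkq;R)$ in terms of the $r_i(R)$. Reindexing the formula from Proposition \ref{coe} — which reads $f_{j-1}(\fkq;R)=(-1)^{j-1}r_{d-j+1}(R)$ in the generic range — by setting $j'=j-1$ gives exactly $(-1)^{j'}f_{j'}(\fkq;R)=r_{d-j'}(R)$ for $j'=0,\ldots,d-2$, which is statement $(ii)$. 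One should be slightly careful about the boundary term $j=d$ in Proposition \ref{coe}, which mixes $r_1$ and $r_0$; this is precisely why $(ii)$ only claims the identity for $j=0,\ldots,d-2$ rather than up to $d-1$, so that boundary case does no harm.

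The implication $(ii)\Rightarrow(iii)$ is purely formal. Given the exact equalities in $(ii)$, one restricts attention to indices $2\le j\in\Lambda(R)$: for such $j$ one has $d-j\in\{0,\ldots,d-2\}$, so $(ii)$ gives $(-1)^{d-j}f_{d-j}(\fkq;R)=r_j(R)$, which in particular yields the inequality $(-1)^{d-j}f_{d-j}(\fkq;R)\le r_j(R)$ demanded by $(iii)$. No work is needed here beyond noting that $\Lambda(R)\subseteq\{0,1,\ldots,d\}$ and that $j\ge 2$ forces $d-j\le d-2$.

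Finally, $(iii)\Rightarrow(i)$ is the substantive implication, and it is exactly the content of Proposition \ref{P5.61}: that proposition asserts that if there is an integer $n$ such that for all distinguished parameter ideals $\fkq\subseteq\fkm^n$ and all $2\le j\in\Lambda(R)$ one has $(-1)^{d-j}f_{d-j}(\fkq;R)\le r_j(R)$, then $R$ is sequentially Cohen-Macaulay. So this step is simply a citation of Proposition \ref{P5.61}.

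The main obstacle in this theorem is entirely hidden inside Proposition \ref{P5.61}, and thus inside the chain Corollary \ref{c5.2} $\to$ Proposition \ref{P4.60} $\to$ Lemma \ref{property1}: the heart of the argument is the inductive reduction on $\dim R$ using a distinguished system of parameters and the splitting behaviour of $\fkb$ relative to the unmixed component $\fka_{\ell-1}$, together with the delicate bookkeeping of how the Noetherian coefficients $f_{s+j}(\fkq;R)$ and the socle dimensions $r_i(R)$ transform under passing to $R/\fkb$. Once those results are in hand, the proof of Theorem \ref{5800} itself is a short matter of reindexing and bookkeeping: I would write $(i)\Rightarrow(ii)$ via Theorem \ref{3.800} and Proposition \ref{coe}, observe $(ii)\Rightarrow(iii)$ is trivial, and close the loop with $(iii)\Rightarrow(i)$ by Proposition \ref{P5.61}.
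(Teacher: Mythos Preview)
Your proposal is correct and follows essentially the same route as the paper: the paper proves $(i)\Rightarrow(ii)$ by citing Theorem \ref{3.800} together with Proposition \ref{P2.7} (which is equivalent to your use of Proposition \ref{coe}, since the latter is derived from the former), declares $(ii)\Rightarrow(iii)$ obvious, and obtains $(iii)\Rightarrow(i)$ directly from Proposition \ref{P5.61}. Your additional remarks about the reindexing and the boundary term are accurate and helpful but not needed for the argument.
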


\begin{proof}
(1) $\Rightarrow$ (2) This is now immediate from Theorem \ref{3.800} and Proposition \ref{P2.7}.

(2) $\Rightarrow$ (3) This is obvious.

(3) $\Rightarrow$ (1) This now immediate from  Proposition \ref{P5.61}.

\end{proof}

\section{Chern coefficients}
In this section, we are going to discuss  the characterizations of the regularity,  Gorensteinness and Cohen-Macaulayness  of local rings. Probably the most important applications of Theorem \ref{5800} and \ref{T5.3} can be summarized in this section.

\begin{setting}\label{2.6}{\rm
Assume that $R$ is a homomorphic image of a Cohen-Macaulay local ring. Let $\calD = \{\fka_i\}_{0 \le i \le \ell}$ be the dimension filtration of $R$ with $\dim \fka_i=d_i$.
 }
\end{setting}

\begin{theorem}\label{6.3}
 $R$ is Gorenstein if and only if for all parameter ideals $\fkq\subseteq \fkm^2$ and $n\gg 0$, we
    have
$$ \calN(\q^{n+1};R)=\binom{n+d-1}{d-1}.$$

\end{theorem}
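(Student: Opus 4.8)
The plan is to extract this as a special case of the machinery built in Sections 4 and 5, applied with the additional input that the Gorenstein condition is equivalent to $r_d(R)=1$ together with sequential Cohen-Macaulayness. First I would observe that the Hilbert-polynomial identity $\calN(\q^{n+1};R)=\binom{n+d-1}{d-1}$ for all $n\gg0$ says precisely, in the notation of Section 3, that $f_0(\q;R)=1$ and $f_i(\q;R)=0$ for $1\le i\le d-1$ for every parameter ideal $\q\subseteq\fkm^2$. For the "only if" direction: if $R$ is Gorenstein then $R$ is Cohen-Macaulay and $\Ass R=\Assh R$, so $R$ is a sequentially Cohen-Macaulay ring with $\ell=1$, every parameter ideal is distinguished, and $\H^i_\fkm(R)=0$ for $i<d$ while $r_d(R)=\ell_R((0):_{\H^d_\fkm(R)}\fkm)=1$ by Grothendieck duality (the canonical module is $R$ itself, so the type is $1$). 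Then Theorem \ref{3.800} and Proposition \ref{coe} (or directly Proposition \ref{P2.7}, noting $r_0(R)=\cdots=r_{d-1}(R)=0$) give $\calN(\q^{n+1};R)=r_d(R)\binom{n+d-1}{d-1}=\binom{n+d-1}{d-1}$ for all $n\gg0$.

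For the "if" direction: assume the stated polynomial identity holds for all $\q\subseteq\fkm^2$. Reading off coefficients, $(-1)^{d-j}f_{d-j}(\q;R)=0\le r_j(R)$ for all $2\le j$, so in particular the hypothesis of Proposition \ref{P5.61} (equivalently condition (iii) of Theorem \ref{5800}) is satisfied; hence $R$ is sequentially Cohen-Macaulay. Now I would invoke Corollary \ref{C5.5}: for every $n$ there is a parameter ideal $\q\subseteq\fkm^n$ with $r_d(R)\le f_0(\q;R)$. Combined with $f_0(\q;R)=1$ for all $\q\subseteq\fkm^2$ (which follows from our hypothesis), this forces $r_d(R)\le 1$, and since $\H^d_\fkm(R)\ne0$ we get $r_d(R)=1$. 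Similarly, for $2\le j\in\Lambda(R)$ with $j<d$, the identity $(-1)^{d-j}f_{d-j}(\q;R)=0$ together with Theorem \ref{5800}(ii) — which applies because $R$ is now known to be sequentially Cohen-Macaulay — gives $r_{j}(R)=(-1)^{d-j}f_{d-j}(\q;R)=0$ for $2\le j\le d-1$ (distinguished $\q\subseteq\fkm^n$ exist by Corollary \ref{exists001}), and one handles $r_1(R)+r_0(R)$ from the $j=d$ case of \ref{coe} with the degree-$(d-1)$ coefficient being $(-1)^{d-1}(r_1(R)+r_0(R))=f_{d-1}(\q;R)=0$. Thus $\H^i_\fkm(R)=0$ for all $i<d$, so $R$ is Cohen-Macaulay, and $r_d(R)=1$ means $R$ is Gorenstein.

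The main obstacle, and the step I would be most careful about, is the bookkeeping that upgrades "sequentially Cohen-Macaulay with the vanishing of all $f_i(\q;R)$ for $i\ge1$ and $f_0(\q;R)=1$" to "$\H^i_\fkm(R)=0$ for $i<d$": one must make sure the equalities in Theorem \ref{5800}(ii), namely $(-1)^{j}f_{j}(\q;R)=r_{d-j}(R)$ for $j=0,\ldots,d-2$, really are available for a distinguished parameter ideal small enough to satisfy our running hypothesis (the hypothesis is stated for all $\q\subseteq\fkm^2$, which certainly includes $\q\subseteq\fkm^n$), and that the dimension $d=1$ and the Artinian $d=0$ edge cases are dispatched directly — for $d=1$ the condition reads $\calN(\q^{n+1};R)=1$ for $n\gg0$, i.e. $f_0(\q;R)=1$, and one checks via Theorem \ref{3.800} that this forces $R$ Cohen-Macaulay of type $1$. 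A secondary point to verify is that the displayed identity holding merely "for $n\gg0$" (rather than for all $n\ge0$, as in the earlier \cite{CQT} formulation) suffices — but this is exactly the content of passing to the Noetherian coefficients $f_i(\q;R)$, which by definition only see the polynomial, so no information is lost.
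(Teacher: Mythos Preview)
Your proof is correct and follows the same overall strategy as the paper: read off the Noetherian coefficients from the hypothesis, verify condition (iii) of Theorem~\ref{5800} to conclude that $R$ is sequentially Cohen--Macaulay, and then recover the values $r_i(R)$ to obtain Cohen--Macaulayness of type one. The paper's execution of the last step is slightly more direct than yours --- rather than invoking Corollary~\ref{C5.5}, Theorem~\ref{5800}(ii), and Proposition~\ref{coe} separately, it simply picks a distinguished $\fkq\subseteq\fkm^2$ via Theorem~\ref{3.800}, applies Proposition~\ref{P2.7} once to get $\calN(\fkq^{n+1};R)=\sum_{i=1}^d r_i(R)\binom{n+i-1}{i-1}+r_0(R)$, and compares coefficients with $\binom{n+d-1}{d-1}$ to read off $r_d(R)=1$ and $r_i(R)=0$ for $i<d$ in one stroke; note also that your line ``$(-1)^{d-j}f_{d-j}(\fkq;R)=0$ for all $2\le j$'' slips at $j=d$, where $f_0=1$, but the needed inequality $1\le r_d(R)$ still holds by Grothendieck non-vanishing, so no harm is done.
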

\begin{proof}
  {\it Only if}: Since $R$ is Gorenstein, by Proposition \ref{P2.7}, we have  
$$\mathcal{N}(\frak q^{n+1};R) = \sum\limits_{i=1}^dr_i(R)\binom{n+i-1}{i-1}+r_0(R)$$
for all $n\ge 1$. On the other hand, we have $r_i(R)=0$ for all $i< d$ and $r_d(R)=1$, because $R$ is Gorenstein. Hence
$$\mathcal{N}(\frak q^{n+1};R) = \binom{n+d-1}{d-1}.$$
for all $n\ge 1$.\\ 
  {\it If}: By the hypothesis, for all distinguished parameter ideals $\q\subseteq \m^2$ we have
$$(-1)^jf_j(\q;R)\le r_{d-i}(R).$$
It follows from Theorem \ref{5800} that $R$ is sequentially Cohen-Macaulay. By Theorem \ref{3.800}, there exists a distinguished parameter ideals $\q\subseteq \m^2$ such that $\calN(\q;R)=\sum\limits_{j\in \Bbb Z}r_j(R)$. Apply the Proposition \ref{P2.7} to this equation to
obtain that
$$\mathcal{N}(\frak q^{n+1};R) = \sum\limits_{i=1}^dr_i(R)\binom{n+i-1}{i-1}+r_0(R).$$
Since $\mathcal{N}(\frak q^{n+1};R) = \binom{n+d-1}{d-1}$, we have $r_i(R)=0$ for all $i\le d-1$ and $r_d(R)=1$. Hence $R$ is Gorenstein.
\end{proof}

In \cite[Theorem 5.2]{CQT}, N. T. Cuong, P. H. Quy and first author showed that $R$ is Cohen-Macaulay if and only if for all parameter ideals $\fkq\subseteq \fkm^2$ and $n\ge 0$, we
    have
$ \calN(\q^{n+1};R)=r_d(R)\binom{n+d-1}{d-1}.$ Note that the condition of Hilbert function $\calN(\q^{n+1};R)$, holding true for all $n\ge 0$, is necessary to their proof. 
The result of  Theorem 5.2 in \cite{CQT} was actually covered in the following result,
but in view of the importance of the following result we changed the condition from Hilbert function to Hilbert polynomial.

\begin{theorem}\label{6.3}
 $R$ is Cohen-Macaulay if and only if for all parameter ideals $\fkq\subseteq \fkm^2$ and $n\gg 0$, we
    have
$$ \calN(\q^{n+1};R)=r_d(R)\binom{n+d-1}{d-1}.$$

\end{theorem}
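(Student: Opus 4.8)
The plan is to argue exactly as in the Gorenstein criterion proved just above, with the conclusion ``$r_d(R)=1$'' replaced by ``$r_i(R)=0$ for all $i<d$''. We may assume $d=\dim R\ge 1$.

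For the \emph{only if} part, suppose $R$ is Cohen--Macaulay. Then $R$ is sequentially Cohen--Macaulay (its dimension filtration has length $\ell=1$) and $\H^i_\m(R)=0$ for $0\le i\le d-1$, so $r_i(R)=0$ for every $i<d$. Moreover every parameter ideal of $R$ is distinguished and, by Northcott's theorem \cite{No}, $\calN(\q;R)=\ell_R((\q:\m)/\q)$ is the same for all of them, equal to $r(R)=r_d(R)=\sum_j r_j(R)$; hence Proposition~\ref{P2.7} applies to every parameter ideal $\q\subseteq\m^2$ and yields
$$\calN(\q^{n+1};R)=\sum_{i=1}^{d}r_i(R)\binom{n+i-1}{i-1}+r_0(R)=r_d(R)\binom{n+d-1}{d-1}$$
for all $n\ge 1$, which is the asserted equality.

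For the \emph{if} part I would first read the Noetherian coefficients off the hypothesis: writing $\calN(\q^{n+1};R)=\sum_{j=0}^{d-1}(-1)^jf_j(\q;R)\binom{n+d-1-j}{d-1-j}$ for $n\gg 0$ and comparing with $r_d(R)\binom{n+d-1}{d-1}$, and using that the binomials $\binom{n+d-1-j}{d-1-j}$ with $0\le j\le d-1$ have pairwise distinct degrees, we get $f_0(\q;R)=r_d(R)$ and $f_j(\q;R)=0$ for $1\le j\le d-1$ --- for every parameter ideal $\q\subseteq\m^2$, in particular for every distinguished one. Hence for every distinguished $\q\subseteq\m^2$ and every $2\le j\in\Lambda(R)$ one has $(-1)^{d-j}f_{d-j}(\q;R)=0\le r_j(R)$ if $j<d$, and $(-1)^{d-j}f_{d-j}(\q;R)=f_0(\q;R)=r_d(R)=r_j(R)$ if $j=d$. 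Thus condition $(iii)$ of Theorem~\ref{5800} holds, so $R$ is sequentially Cohen--Macaulay. Now Theorem~\ref{3.800} gives a distinguished parameter ideal $\q$, which we may choose inside $\m^2$, with $\calN(\q;R)=\sum_j r_j(R)$, so Proposition~\ref{P2.7} applies to $\q$ and gives $\calN(\q^{n+1};R)=\sum_{i=1}^{d}r_i(R)\binom{n+i-1}{i-1}+r_0(R)$ for all $n\ge 1$. Equating this with the hypothesis $r_d(R)\binom{n+d-1}{d-1}$ and cancelling the $i=d$ term leaves $\sum_{i=1}^{d-1}r_i(R)\binom{n+i-1}{i-1}+r_0(R)=0$ identically in $n$; since the $r_i(R)$ are nonnegative and the binomials are positive for $n\ge 0$, every term vanishes, so $r_i(R)=0$ for $0\le i\le d-1$. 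As $\H^i_\m(R)$ is Artinian, $(0):_{\H^i_\m(R)}\m=0$ forces $\H^i_\m(R)=0$ for $i<d$; thus $\depth R=d=\dim R$ and $R$ is Cohen--Macaulay.

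The only delicate point is the ordering of the ``if'' argument: the purely numerical hypothesis must first be converted into the inequalities feeding Theorem~\ref{5800} in order to obtain sequential Cohen--Macaulayness, and only afterwards may one invoke Proposition~\ref{P2.7} --- which presupposes that property --- to pin down all the $r_i(R)$. The nonnegativity of the $r_i(R)$ is what makes the final comparison of Hilbert polynomials collapse to the Cohen--Macaulay conclusion; beyond this the argument is the same bookkeeping as in the Gorenstein case.
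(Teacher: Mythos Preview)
Your proof is correct and follows essentially the same route as the paper's own proof: extract the Noetherian coefficients from the hypothesis to verify condition~$(iii)$ of Theorem~\ref{5800}, deduce sequential Cohen--Macaulayness, then invoke Theorem~\ref{3.800} and Proposition~\ref{P2.7} to force $r_i(R)=0$ for $i<d$. Your write-up is in fact somewhat more careful than the paper's (you spell out why the inequalities in Theorem~\ref{5800}$(iii)$ hold and why $r_i(R)=0$ implies $\H^i_\m(R)=0$); the only minor remark is that Artinianness of $\H^i_\m(R)$ is not needed for that last step, since local cohomology is always $\m$-torsion and any $\m$-torsion module with zero socle is zero.
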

\begin{proof}
  {\it Only if}: Since $R$ is Cohen-Macaulay, by Proposition \ref{P2.7}, we have  
$$\mathcal{N}(\frak q^{n+1};R) = \sum\limits_{i=1}^dr_i(R)\binom{n+i-1}{i-1}+r_0(R)$$
for all $n\ge 1$. On the other hand, we have $r_i(R)=0$ for all $i< d$, because $R$ is Cohen-Macaulay. Hence
$$\mathcal{N}(\frak q^{n+1};R) = r_d(R)\binom{n+d-1}{d-1}.$$
for all $n\ge 1$.\\ 
  {\it If}: By the hypothesis, for all distinguished parameter ideals $\q\subseteq \m^2$ we have
$$(-1)^jf_j(\q;R)\le r_{d-i}(R).$$
It follows from Theorem \ref{5800} that $R$ is sequentially Cohen-Macaulay. By Theorem \ref{3.800}, there exists a distinguished parameter ideals $\q\subseteq \m^2$ such that $\calN(\q;R)=\sum\limits_{j\in \Bbb Z}r_j(R)$. Apply the Proposition \ref{P2.7} to this equation to
obtain that
$$\mathcal{N}(\frak q^{n+1};R) = \sum\limits_{i=1}^dr_i(R)\binom{n+i-1}{i-1}+r_0(R).$$
Since $\mathcal{N}(\frak q^{n+1};R) = r_d(R)\binom{n+d-1}{d-1}$, we have $r_i(R)=0$ for all $i\le d-1$. Hence $R$ is Cohen-Macaulay.
\end{proof}

In the sequel, we shall only use the following.

\begin{setting}\label{2.6}{\rm
Let $R$ be a Noetherian local ring with maximal ideal $\frak m$, $d=\dim R\geqslant 2$. Assume that  $R$ is unmixed, that is $\dim \hat R/\fkp=d$ for all $\fkp\in \Ass(\hat R)$. }
\end{setting}

\begin{theorem}\label{6.1}
 $R$ is regular if and only if $f_0(\m)=1.$
\end{theorem}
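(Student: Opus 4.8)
The plan is to prove the equivalence $R$ regular $\iff f_0(\fkm)=1$ by reducing it to the multiplicity criterion of Nagata (Theorem 40.6 in \cite{Na}), which states that an unmixed local ring is regular precisely when $e_0(\fkm)=1$. The forward implication is essentially a computation: if $R$ is regular, then $R$ is Cohen-Macaulay with $r_d(R)=\ell_R(\H^d_\fkm(R):\fkm)$ equal to the Cohen-Macaulay type, which is $1$ for a regular ring, and $r_i(R)=0$ for $i<d$. Applying Theorem \ref{5800} (or directly Proposition \ref{P2.7} together with Theorem \ref{3.800}) to a sufficiently deep distinguished parameter ideal $\fkq$, we get $f_0(\fkq;R)=r_d(R)=1$. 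The subtlety is that we want $f_0(\fkm)$, not $f_0(\fkq)$ for $\fkq$ deep in $\fkm$; but since $R$ is regular, $\fkm$ itself is generated by a system of parameters and the associated graded ring $\gr_\fkm(R)$ is a polynomial ring, so $\fkm^{n+1}:\fkm=\fkm^n\cdot(\fkm:\fkm)=\fkm^n$ gives $\calN(\fkm^{n+1};R)=\ell_R(\fkm^n/\fkm^{n+1})=\binom{n+d-1}{d-1}$, whence $f_0(\fkm)=1$ directly.

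For the converse, I would assume $f_0(\fkm)=1$ and aim to show $e_0(\fkm)=1$, then invoke Nagata. First I would observe that $\fkm\subseteq\fkm^2$ fails, so the hypotheses of the earlier theorems (which require $\fkq\subseteq\fkm^2$) do not apply directly to $\fkm$; instead I would use the inequality of Lemma \ref{5.1}. If $e_0(\fkm;R)>1$, then by Fact \ref{F2.5}(4) applied with $I=\fkm:\fkm=R$... no — rather, the relevant point is that when $e_0(\fkm)>1$ we have $\fkm I^n=\fkm\fkq^n$ type relations; more carefully, since $R$ is unmixed it is equidimensional and the dimension filtration is trivial, $\ell=1$, $D_0=(0)$, $D_1=R$. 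So $R$ is automatically "sequentially Cohen-Macaulay if and only if Cohen-Macaulay". The key chain is: $f_0(\fkm)=1$ should force, via the relation between $\calN(\fkm^{n+1};R)$ and the structure of $\gr_\fkm(R)$, that $\calN(\fkm^{n+1};R)=\binom{n+d-1}{d-1}$ for $n\gg0$; combined with $\calN(\fkm^{n+1};R)=\ell_R(\fkm^{n+1}:\fkm/\fkm^{n+1})\le \ell_R(\fkm^n/\fkm^{n+1})$ and the fact that $\ell_R(\fkm^n/\fkm^{n+1})$ is a polynomial of degree $d-1$ with leading coefficient $e_0(\fkm)/(d-1)!$, we would get $e_0(\fkm)\ge 1$ trivially but need the reverse — so the real content is showing $f_0(\fkm)=1$ forces $\fkm^{n+1}:\fkm=\fkm^n$ eventually, i.e. that the socle of $\fkm^{n+1}$ inside $R/\fkm^{n+1}$ sits entirely in $\fkm^n/\fkm^{n+1}$ and has the minimal possible size. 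Then $\gr_\fkm(R)$ has a one-dimensional socle in each high degree, which for an equidimensional (hence, by unmixedness, nice) situation pushes towards $\gr_\fkm(R)$ being regular.

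The cleaner route, which I would actually carry out: treat the cases $e_0(\fkm)=1$ and $e_0(\fkm)>1$ separately. If $e_0(\fkm)=1$ we are done immediately by Nagata. If $e_0(\fkm)>1$, I claim $f_0(\fkm)\ge 2$, contradicting the hypothesis. Indeed, by Fact \ref{F2.5}(4) the hypothesis $e_0(\fkm)>1$ gives $\fkm I^n=\fkm\fkq^n$ for parameter ideals, but for $\fkm$ itself one uses that $e_0(\fkm)>1$ implies $\fkm^2\ne\fkq\fkm$ for any parameter ideal $\fkq$, and more to the point that the embedding dimension exceeds $d$, so $\gr_\fkm(R)_1$ has dimension $>d$; a standard argument (e.g. via the fact that a standard graded algebra over a field of embedding dimension $>$ dimension cannot have all its high-degree socle dimensions equal to $1$ unless... ) shows the index of reducibility grows faster, giving $f_0(\fkm)\ge 2$. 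The hard part will be making this last step rigorous — ruling out $f_0(\fkm)=1$ when $e_0(\fkm)>1$ — since one cannot simply quote the $\fkq\subseteq\fkm^2$ results; I expect to need a direct analysis of the socle of $\gr_\fkm(R)$ in high degrees, or to pass to $\fkq\subseteq\fkm^2$ a parameter ideal and compare $f_0(\fkq)$ with $f_0(\fkm)$ via Lemma \ref{5.1} and Proposition \ref{coe}, concluding $1=f_0(\fkm)\ge e_1(\fkm:\fkm;R)-\cdots$ leading to $r_d(R)=1$ hence $R$ Gorenstein by Theorem \ref{6.2}, and then a Gorenstein ring with $e_0(\fkm)>1$ still must be shown to violate $f_0(\fkm)=1$, which returns to the graded computation. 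So the genuine obstacle is the graded socle computation for $\gr_\fkm(R)$ when the embedding dimension is too large.
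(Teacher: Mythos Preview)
Your forward direction is correct; the direct computation via $\gr_\fkm(R)$ being a polynomial ring is perfectly fine and is in the same spirit as the paper's argument.

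For the converse you are close, but there is a genuine error that sends you on an unnecessary detour. You write the inequality
\[
\calN(\fkm^{n+1};R)=\ell_R\bigl((\fkm^{n+1}:\fkm)/\fkm^{n+1}\bigr)\le \ell_R(\fkm^n/\fkm^{n+1}),
\]
but the inclusion goes the other way: since $\fkm\cdot\fkm^n\subseteq\fkm^{n+1}$ we have $\fkm^n\subseteq\fkm^{n+1}:\fkm$, hence
\[
\ell_R(\fkm^n/\fkm^{n+1})\;\le\;\ell_R\bigl((\fkm^{n+1}:\fkm)/\fkm^{n+1}\bigr)=\calN(\fkm^{n+1};R).
\]
Both sides are eventually polynomials of degree $d-1$ in $n$, so comparing leading coefficients gives $e_0(\fkm)\le f_0(\fkm)=1$, whence $e_0(\fkm)=1$ and Nagata finishes the proof immediately. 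There is no need for the case split, the graded socle analysis, or the detour through Theorem~\ref{6.2}.

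The paper argues slightly differently but just as briefly: since $R$ is unmixed one has $\depth R>0$, and then a standard fact (essentially the stabilization of the Ratliff--Rush filtration for an ideal containing a nonzerodivisor) gives $\fkm^{n+1}:\fkm=\fkm^n$ for all $n\gg0$. This yields the \emph{equality} $f_0(\fkm)=e_0(\fkm)$ rather than just the inequality, and Nagata's criterion concludes. Either route is short; the obstacle you identified at the end of your proposal does not actually exist once the inclusion is oriented correctly.
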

\begin{proof}
 Since $R$ is regular, $R$ is Cohen-Macaulay and $\m$ is a parameter ideal of $R$. Since $R$ is unmxied, $\m$ is a distinguished parameter ideal of $R$. Thus by Theorem \ref{5800}, we have
$f_0(\m;R)=r_d(R)$. But $r_d(R)=1$ because $R$ is regular. Hence $f_0(\m;R)=1$.

Conversely, since $R$ is unmixed, we have $\depth(R)> 0$. Therefore there exists an integer $n_0$ such that
$\m^{n+1}:\m=\m^n$ for all $n\ge n_0$. It follows that $e_0(\m;R)=f_0(\m;R)=1$. Since $R$ is unmixed, by Theorem 40.6 in \cite{Na}, $R$ is regular.

\end{proof}

Theorem \ref{5800} and \ref{T5.3} have a very important corollary. One obvious consequence
is the following

\begin{theorem}\label{6.2}
 The following statements are equivalent.

\begin{itemize}
\item[$(1)$] $R$ is Gorenstein.
\item[$(2)$]  For all parameter ideals $\frak q\subseteq \m^2$, we
    have
$$ f_0(\fkq;R)=1.$$

\item[$(3)$]  For all parameter ideals $\frak q\subseteq \m^2$, we
    have
$$ e_1(I)-e_1(\fkq)\le 1,$$
where $I=\fkq:\fkm$.

\end{itemize}

\end{theorem}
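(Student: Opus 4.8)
The plan is to prove the cycle $(1)\Rightarrow(2)\Rightarrow(3)\Rightarrow(1)$, the last implication being the substantial one. At the outset I would pass to the completion $\hat{R}$ in order to assume that $R$ is a homomorphic image of a Cohen--Macaulay local ring: $\hat{R}$ is again unmixed of dimension $d\ge 2$, the values of $e_i(\q)$, $f_i(\q;R)$, $r_j(R)$ and the Gorenstein property are all unchanged, and this reduction is needed because Theorems \ref{3.800}, \ref{T5.3} and Proposition \ref{coe} are stated under that hypothesis.

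For $(1)\Rightarrow(2)$: if $R$ is Gorenstein it is Cohen--Macaulay, hence sequentially Cohen--Macaulay with $\ell=1$, every parameter ideal is distinguished, and for each parameter ideal $\q$ the Artinian ring $R/\q$ is Gorenstein, so $\calN(\q;R)=\ell_R((\q:\m)/\q)=1=\sum_{j\in\Bbb Z}r_j(R)$ (since $r_d(R)=1$ and $r_j(R)=0$ for $j\ne d$). Hence Proposition \ref{P2.7} applies to $\q$ and gives $\calN(\q^{n+1};R)=r_d(R)\binom{n+d-1}{d-1}=\binom{n+d-1}{d-1}$ for all $n\ge 1$; comparing the leading coefficient (in $n$) of this polynomial with that of $p_{\q,R}(n)=\sum_{i=0}^{d-1}(-1)^if_i(\q;R)\binom{n+d-1-i}{d-1-i}$ forces $f_0(\q;R)=1$, for every parameter ideal $\q$ and in particular for those contained in $\m^2$.

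For $(2)\Rightarrow(3)$: since $R$ is unmixed, the inequality $e_1(\q:\m)-e_1(\q)\le f_0(\q)$ established above for all parameter ideals $\q\subseteq\m^2$ gives $e_1(I)-e_1(\q)\le f_0(\q)=1$, where $I=\q:\m$. For $(3)\Rightarrow(1)$: unmixedness forces $\Lambda(R)=\{d\}$ with $d\ge 2$, so the only relevant index in Theorem \ref{T5.3}$(iii)$ is $j=d$, where the displayed inequality reads $r_d(R)\ge e_1(I)-e_1(\q)$. Since $\H^d_\m(R)$ is a nonzero Artinian module, $r_d(R)\ge 1$, so hypothesis $(3)$ yields $e_1(I)-e_1(\q)\le 1\le r_d(R)$ for all parameter ideals $\q\subseteq\m^2$, in particular for the good ones; hence Theorem \ref{T5.3} applies and $R$ is sequentially Cohen--Macaulay, and therefore Cohen--Macaulay because $\ell=1$.

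It remains to pass from ``Cohen--Macaulay'' to ``Gorenstein''. Being Cohen--Macaulay and unmixed, $R$ has every parameter ideal distinguished, so by Theorem \ref{3.800} there are an integer $n\ge 2$ and a distinguished parameter ideal $\q\subseteq\m^n$ with $\calN(\q;R)=\sum_j r_j(R)=r_d(R)$; for such a $\q$ Proposition \ref{coe} applies (taking $n\ge 2$ so that $\q\subseteq\m^2$), and since $d\ge 2$ its ``otherwise'' case with $j=1$ gives $e_1(I)-e_1(\q)=f_0(\q;R)=r_d(R)$. Combined with $(3)$ this forces $r_d(R)\le 1$, hence $r_d(R)=1$; a Cohen--Macaulay local ring of type one is Gorenstein, which closes the cycle. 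The main obstacle is $(3)\Rightarrow(1)$, and within it the two delicate points are: (a) that unmixedness genuinely collapses the multi-index condition of Theorem \ref{T5.3}$(iii)$ to the single inequality involving $e_1$, so that the lone $e_1$-hypothesis of $(3)$ already triggers sequential Cohen--Macaulayness, and (b) that, once $R$ is known Cohen--Macaulay, one can produce a distinguished parameter ideal for which the \emph{equality} $e_1(\q:\m)-e_1(\q)=r_d(R)$ holds, which is precisely what turns the inequality ``$\le 1$'' into ``$r_d(R)=1$''.
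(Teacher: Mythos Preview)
Your argument is correct and follows essentially the same route as the paper. The only differences are cosmetic: for $(3)\Rightarrow(1)$ the paper invokes Proposition~\ref{P4.60} directly (with $S=R$ since $R$ is unmixed) rather than the packaged Theorem~\ref{T5.3}, and for $(2)\Rightarrow(3)$ the paper reproves the case split $e_0(\m)>1$ versus $e_0(\m)=1$ instead of citing the proposition $e_1(\q:\m)-e_1(\q)\le f_0(\q)$ you quote; your explicit passage to the completion to secure the ``homomorphic image of a Cohen--Macaulay ring'' hypothesis is a point the paper leaves implicit.
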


\begin{proof}
(1) $\Rightarrow$ (2) Since $R$ is Gorenstein, $R$ is Cohen-Macaulay. Let  $\q$ be a parameter ideal of $R$. Then we have $\calN(\q; R)=\sum\limits_{j \in \Bbb Z}\ell_R((0):_{\H^j_\fkm(R)}\fkm).$
Since $R$ is unmixed, parameter ideal $\q$ is distinguished. Then by Proposition \ref{P2.7}, we have
 $f_0(\q;R)=r_d(R)$. But $r_d(R)=1$ because $R$ is Gorenstein. Hence $f_0(\q;R)=1$.

(2) $\Rightarrow$ (3) 
Our result in the case in which $e_0(\m;R)>1$ is immediate from  Lemma \ref{5.1}. Thus we suppose henceforth in this proof that $e_0(\m;R)=1$. It follows from  $R$ is unmixed and Theorem 40.6 in \cite{Na} that $R$ is Cohen-Macaulay.  
Since $R$ is unmixed, every parameter ideals $\q$ are distinguished.  Therefore by Theorem \ref{3.800}, there exists an integer $n$ such that for all parameter ideals $\q$, we have
 $$\calN(\q; R)=\sum\limits_{j \in \Bbb Z}r_j(R).$$
It follows from Proposition \ref{coe}, we have
$$ e_1(I)-e_1(\fkq)=f_0(\fkq;R)=1.$$

(3) $\Rightarrow$ (1) Since $ e_1(I)-e_1(\fkq)\le1$ for all parameter ideals $\q$, by Proposition \ref{P4.60} $R$ is Cohen-Macaulay. Since $R$ is unmixed, every parameter ideals $\q$ are distinguished.  Therefore by Theorem \ref{3.800}, there exists an integer $n$ such that for all parameter ideals $\q$, we have
$\calN(\q; R)=\sum\limits_{j \in \Bbb Z}r_j(R).$
It follows from Proposition \ref{coe}, we have
$$ r_d(R)=e_1(I)-e_1(\fkq)=1.$$
Hence $R$ is Gorenstein.

\end{proof}

Our next result establishes some absolutely fundamental facts about
 Cohen-Macaulay rings.

\begin{theorem}\label{6.3}
 The following statements are equivalent.

\begin{itemize}
\item[$(1)$] $R$ is Cohen-Macaulay.
\item[$(2)$]  For all parameter ideals $\fkq\subseteq \fkm^2$, we
    have
$$ f_0(\fkq;R)=r_d(R).$$


\item[$(3)$]  For all parameter ideals $\fkq\subseteq \fkm^2$, we
    have
$$ e_1(I;R)-e_1(\q;R)\le r_d(R),$$
where $I=\fkq:\fkm$.

\end{itemize}

\end{theorem}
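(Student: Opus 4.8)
The plan is to prove the cycle of implications $(1)\Rightarrow(2)\Rightarrow(3)\Rightarrow(1)$, closely following the argument of Theorem~\ref{6.2}, with the type $r_d(R)=\ell_R(\Ext^d_R(R/\fkm,R))$ playing the rôle that the number $1$ plays in the Gorenstein case. Throughout I may assume $R$ is complete: Cohen-Macaulayness, unmixedness, and all the invariants $e_i(\cdot)$, $f_i(\cdot)$, $r_j(R)$, $\calN(\cdot;R)$ occurring in the statement are insensitive to $\fkm$-adic completion, so this reduction is harmless, and then $R$ is a homomorphic image of a regular local ring, so the machinery of Sections~2--5 applies. Crucially, since $R$ is unmixed its dimension filtration is trivial ($\ell=1$, $\fka_{\ell-1}=(0)$), so the module $S=R/\fka_{\ell-1}$ appearing in Proposition~\ref{P4.60} coincides with $R$, and every parameter ideal of $R$ is distinguished.

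$(1)\Rightarrow(2)$: if $R$ is Cohen-Macaulay then $r_j(R)=0$ for $j\neq d$, and for every parameter ideal $\fkq$ one has $\calN(\fkq;R)=\sum_j r_j(R)=r_d(R)$ (Northcott~\cite{No}; alternatively Theorem~\ref{3.800}, since all parameter ideals are distinguished). Feeding this into Proposition~\ref{P2.7} with $N=(0)$ and $L=R$ gives $\calN(\fkq^{n+1};R)=r_d(R)\binom{n+d-1}{d-1}$ for all $n\ge1$, so the leading Noetherian coefficient is $f_0(\fkq;R)=r_d(R)$. $(2)\Rightarrow(3)$: writing $I=\fkq:\fkm$, if $e_0(\fkm;R)>1$ then Lemma~\ref{5.1} together with $(2)$ gives $e_1(I;R)-e_1(\fkq;R)\le f_0(\fkq;R)=r_d(R)$ for every parameter ideal; if $e_0(\fkm;R)=1$ then $R$ is an unmixed local ring of multiplicity one, hence regular by Theorem~40.6 of \cite{Na}, hence Gorenstein, so Theorem~\ref{6.2} gives $e_1(I;R)-e_1(\fkq;R)\le1\le r_d(R)$ for all $\fkq\subseteq\fkm^2$. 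Either way $(3)$ follows.

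$(3)\Rightarrow(1)$: the hypothesis provides the bound $e_1(I;R)-e_1(\fkq;R)\le r_d(R)$ for all parameter ideals $\fkq\subseteq\fkm^2$, hence a fortiori for all distinguished parameter ideals contained in $\fkm^n$ with $n=2$; since $d\ge2$, Proposition~\ref{P4.60} then shows that $S$ is Cohen-Macaulay, and $S=R$. This closes the cycle. The substantive content is carried entirely by Proposition~\ref{P4.60} (which in turn rests on the existence of Goto sequences of type~I from Proposition~\ref{exists} and on Lemma~\ref{property1}); at the level of the present proof the only points demanding care are the completion reduction and, above all, the observation that unmixedness collapses the dimension filtration so that $S=R$ --- without this, the best one extracts from Proposition~\ref{P4.60} is Cohen-Macaulayness of the top quotient $R/\fka_{\ell-1}$, which is precisely why the general statements in Theorems~\ref{T5.3} and~\ref{5800} are more delicate.
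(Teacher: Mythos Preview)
Your proof is correct and follows essentially the same route as the paper's: $(1)\Rightarrow(2)$ via Proposition~\ref{P2.7}, $(2)\Rightarrow(3)$ by splitting on $e_0(\fkm;R)$ and invoking Lemma~\ref{5.1} in the non-regular case, and $(3)\Rightarrow(1)$ by Proposition~\ref{P4.60} together with the observation that unmixedness forces $S=R$. The only noteworthy differences are that you make the completion reduction explicit (which the paper leaves implicit despite needing the ``homomorphic image of Cohen--Macaulay'' hypothesis for Proposition~\ref{P4.60}), and in the $e_0(\fkm;R)=1$ subcase of $(2)\Rightarrow(3)$ you appeal to the already-proved Theorem~\ref{6.2} rather than re-running Proposition~\ref{coe} as the paper does---both are valid and yield the same conclusion.
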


\begin{proof}
(1) $\Rightarrow$ (2)  Let  $\q$ be a parameter ideal of $R$. Then we have $\calN(\q; R)=\sum\limits_{j \in \Bbb Z}\ell_R((0):_{\H^j_\fkm(R)}\fkm)$, because $R$ is Cohen-Macaulay.
Since $R$ is unmixed, parameter ideal $\q$ is distinguished. Then by Proposition \ref{P2.7}, we have
 $f_0(\q;R)=r_d(R)$.

(2) $\Rightarrow$ (3) 
In the case in which $e_0(\m;R)>1$ there is nothing to prove, because of Lemma \ref{5.1} and so we suppose that $e_0(\m;R)=1$. Then  by Theorem 40.6 in \cite{Na}, $R$ is Cohen-Macaulay because $R$ is unmixed.  Let  $\q$ be a parameter ideal of $R$ such that $\q\subseteq \m^2$ and put $I=\fkq:\fkm$. Then we have $$\calN(\q; R)=\sum\limits_{j \in \Bbb Z}\ell_R((0):_{\H^j_\fkm(R)}\fkm),$$
since $R$ is Cohen-Macaulay.   It follows from Proposition \ref{coe} and $\q\subseteq\m^2$, we have
$$ e_1(I)-e_1(\fkq)=f_0(\fkq;R)=r_d(R).$$

(3) $\Rightarrow$ (1) Since $ e_1(I)-e_1(\fkq)\le r_d(R)$ for all parameter ideals $\q\subseteq\m^2$, by Proposition \ref{P4.60} $R$ is Cohen-Macaulay, as required.

\end{proof}


\end{document}